\newtheorem{theorem}{Theorem}[section]
\newtheorem{lemma}[theorem]{Lemma}
\newtheorem{corollary}[theorem]{Corollary}
\theoremstyle{definition}
\newtheorem{example}[theorem]{Example}
\newtheorem{remark}[theorem]{Remark}
\newtheorem{remarks}[theorem]{Remarks}
\newcommand{\norm}[2][\relax]{%                             % norm
	%\ensuremath{\left\Vert{#2}\right\Vert_{#1}}}
	\ifx#1\relax \ensuremath{\left\Vert#2\right\Vert}
	\else \ensuremath{\left\Vert#2\right\Vert_{#1}}
	\fi}
\newcommand{\Norm}[2][\relax]{%                             % norm
	%\ensuremath{\left\Vert{#2}\right\Vert_{#1}}}
	\ifx#1\relax \ensuremath{\left\Vert#2\right\Vert}
	\else \ensuremath{\left\Vert#2\right\Vert_{#1}}
	\fi}
\def\R{\mathbb{R}}
\def\Z{\mathbb{Z}}
\def\BFS{{\mathrm{E}}}
\def\BBFS{{\mathbf{E}}}
\def\cL{{\mathcal L}}
\def\cM{{\mathcal M}}
\def\cN{{\mathcal N}}
\newcommand{\ud}{\mathrm{d}}
\newcommand{\X}{{X}}
\newcommand{\Y}{{Y}}
\newcommand{\st}{\,|\,}
\DeclareMathOperator{\Dom}{dom}  
\DeclareMathOperator{\Rg}{ran}
\DeclareMathOperator{\Var}{Var}
\newcommand{\dom}[1]{\Dom {#1}}
\newcommand{\rg}[1]{\Rg {#1}}
\def\tr\XYetaq{Tr(\X,Y, \eta ,q)}
\def\tr\XY1-tq{Tr(\X,Y, 1-\theta ,q)}
\def\tr\XYtq{Tr(\X,Y, \theta,q)}
\definecolor{darkred}{rgb}{0.7,0.1,0.1}
\newcounter{aufzi}
\renewcommand\theaufzi{(\alph{aufzi})}
\newenvironment{aufzi}{\begin{list}{ {\upshape\theaufzi }}{
%(\alph{aufzi})}}{
         \usecounter{aufzi}
         \topsep1ex
%        \partopsep
         \parsep0cm
         \itemsep1ex
         \leftmargin0.8cm
%        \rightmargin
%        \listparindent
         \labelwidth0.5cm
         \labelsep0.3cm
         %\itemindent-0.3cm
}}
{\end{list}}
\newcounter{aufzii}
\renewcommand\theaufzii{(\roman{aufzii})}
\newcounter{aufziii}
\renewcommand\theaufziii{(\arabic{aufziii})}
\newcounter{aufziv}
\renewcommand\theaufziv{\Roman{aufziv}}
\title[Real interpolation of functions]{Real interpolation of functions with applications to accretive operators on Banach spaces}
\author{Ralph Chill}
\address{R.~Chill, Institut f\"ur Analysis, Fakult\"at Mathematik, TU Dresden, 01062 Dresden, Germany}
\email{ralph.chill@tu-dresden.de}
\author{Praveen Sharma}
\address{P.~Sharma, Department of Mathematics,
University of Delhi South Campus,
Benito Juarez Road,
New Delhi 110021, India}
\email{sharmapraveen7089@gmail.com}
\author{Sachi Srivastava}
\address{S.~Srivastava, Department of Mathematics,
University of Delhi South Campus,
Benito Juarez Road,
New Delhi 110021, India}
\email{sachi\_srivastava@yahoo.com}
\thanks{This project is supported by the VAJRA scheme VJR/2018/000127, of DST-SERB, Govt. of India}
\begin{document}

	\date{\today}%05.07.14
	
	\keywords{real interpolation, $K$-method, mean method, trace method, Banach function space, accretive operators}
	
	\subjclass{46B70, 47H06, 47H20, 47J35, 35K92}  %28C15, 46E05, 28C99} 
	
	\begin{abstract} 
		We study real interpolation, but instead of interpolating between Banach spaces, we interpolate between general functions taking values in $[0,\infty].$  We show the equivalence of the mean method and the $K$-method and apply the general theory to interpolation between the norm on a Banach space and the set norm associated with an m-accretive operator on such a space. 
	\end{abstract}

	\renewcommand{\subjclassname}{\textup{2010} Mathematics Subject Classification}
	
	\maketitle
	
\section{Introduction}

The interpolation theory of Banach spaces has been developed in the early 1960s. Real interpolation spaces play a prominent role because they appear naturally as trace spaces of, for instance, classical Sobolev spaces on domains and are thus connected to the regularity theory of elliptic or parabolic boundary value problems. In the abstract setting, they appear as trace spaces of certain weighted, vector-valued Sobolev spaces on the real half-line and find here their connection to abstract Cauchy problems in Banach spaces, namely as the spaces of initial values for which the solutions have some described time regularity; see, for example, the monographs by Bergh \& L\"ofstr\"om \cite{BeLo76}, Lunardi \cite{Lu95,Lu09} and Triebel \cite{Tr95}.

Given an interpolation couple $(\X_0 ,\X_1 )$ of Banach spaces, $p\in (1,\infty )$, $\theta\in (0,1)$, one defines the real interpolation space $(\X_0 ,\X_1 )_{\theta ,p}$ to be the space of all elements $x\in \X_0 +\X_1$ such that 
\[
 t\mapsto \frac{K(x,t)}{t} \in L^p(0,\infty , t^{p(1-\theta) - 1}\ud t) ,
\]
where $K(x,t) = K(x,t,\X_0 , \X_1 )$ is the $K$-functional given by
\begin{equation} \label{lineark}
K(x,t) := \inf_{ x = x_0 + x_1 \atop x_0 \in \X_0, x_1 \in \X_1 } ( \norm[\X_0]{x_0} + t \norm[X_1]{x_1} ).
\end{equation}
The space $(\X_0 ,\X_1 )_{\theta ,p}$ is a Banach space for the norm 
\[
 \norm[(\X_0 ,\X_1)_{\theta ,p}]{x} := \Norm[L^p(0,\infty , t^{p(1-\theta) - 1}\ud t)]{t\mapsto \frac{K(x,t)}{t} } ,
\]
and it is indeed an interpolation space in the sense that every linear operator $T:\X_0 \cap \X_1 \to \X_0 + \X_1$ which maps $\X_0$ and $\X_1$ boundedly into itself also maps the space $(\X_0 ,\X_1 )_{\theta ,p}$ boundedly into itself. 

In this article, we change the perspective on interpolation in the following way. Instead of saying that we interpolate between Banach spaces, we rather say that we interpolate between norms on $\X = \X_0 + \X_1$, with the small difference that our norms are allowed to take values in $[0,\infty ]$, that is, the value $\infty$ is possible.  The resulting interpolation function is again a norm and its effective domain (the set of elements in which the norm is finite) is the interpolation space. 

Having changed the perspective, we extend the theory by not only interpolating between norms, but between general functions with values in $[0,\infty ]$. This could for example be powers of norms (compare with Peetre \cite{Pe70}, Tartar \cite{Tar72} and Bergh \& L\"ofstr\"om \cite[Section 3.11]{BeLo76}), but one can think of much more general functions. 

In this article, we review some classical interpolation methods in the context of interpolation couples of functions. We show that effective domains of interpolation functions (defined by a mean method) are left invariant by certain nonlinear operators. This could be a justification to call the effective domains interpolation sets, but an example shows that the effective domains of interpolation functions rather depend on the interpolation couple of functions than on their domains. Further, we  show that our mean method and the $K$-method are equivalent. For the definition of interpolation functions, instead of just  relying on the polynomially weighted $L^p$-spaces, we use a larger class of Banach function spaces between $L^1 (0,\infty )$ and $L^\infty (0,\infty )$. Bennett, for example, extended the $K$-method by replacing the classical polynomially weighted $L^p$ spaces by {\em unweighted} rearrangement invariant Banach function spaces \cite{Be74II}. On the other hand, several authors studied extensions of the $K$-method using more general weights than power weights, staying however within the $L^p$ scale; see for example Kalugina \cite{Ka75} (quasipower weights), Sagher \cite{Sa81} (weights of Calder\'on type), and more recently Bastero, Milman \& Ruiz \cite{BaMiRu01} (Calder\'on weights). Real interpolation spaces on the basis of general Banach function spaces have been studied in Bennett \& Sharpley \cite{BeSh88} and Brudny{\u\i} \& Krugljak \cite{BrKr91}; see also Chill \& Kr\'ol \cite{ChKr17} in this context.

We then apply the general theory to interpolation between the norm on a Banach space and the set norm associated with a nonlinear $m$-accretive operator on that space. Such operators are negative generators of nonlinear, strongly continuous semigroups. Interpolating between the norm on a Banach space and the set norm of an $m$-accretive operator can be seen as interpolating between the domain of the operator and the closure of the domain. Such a theory has been developed in 1973 and 1981 in two theses by D. Brezis \cite{Bre74a} (who considered $m$-accretive operators on Hilbert spaces) and A. Dufetel \cite{Df81a} (who generalized this by considering $m$-accretive operators on Banach spaces). Both these very interesting works lead to the articles \cite{Bre76} and \cite{Df81}, but it seems that perhaps they did not get as much attention as they deserved.  We show that the interpolation functions defined by the mean method and by the $K$-method are equivalent to other functions defined via the resolvent and the semigroup, and we thus characterize their effective domains. We show that semigroup orbits corresponding to initial values in the effective domain of interpolation functions have some prescribed  regularity between Lipschitz continuity and mere continuity. As a consequence, such orbits are H\"older type continuous. We then prove an invariance of interpolation sets under perturbations, and we give an additional description of interpolation sets when the underlying semigroup is regularizing ($=$ analytic in the case of linear $C_0$-semigroups). Finally, in the special case when the $m$-accretive operator is the subgradient of a proper, lower semicontinuous and convex function on a Hilbert space, we characterize the effective domain of associated interpolation functions in terms of interpolation functions between the norm of the Hilbert space and the square root of the energy function, and we apply this to prove a regularity result for a parabolic equation involving the $q$-Laplace operator.

\section{Banach function spaces} \label{sec.definition}

\subsection{Banach function spaces} The interpolation theory is based on the use of function spaces over the interval $(0,\infty )$. We denote by $L^0 (0,\infty )$ the set of all (equivalence classes of) complex, measurable functions on $(0,\infty )$  and we let $L^1 (0,\infty )$ be the space of all Lebesgue integrable functions on $(0,\infty )$. Accordingly,  $W^{1,1} (0,\infty )$ is the Sobolev space of all integrable functions on $(0,\infty )$ which admit a distributional derivative in $L^1 (0,\infty )$. 

Following Brudny{\u\i} \& Krugljak \cite[Definition 2.6.3, p. 246]{BrKr91}, a Banach space $\BFS$ is called a {\it Banach function space over $(0,\infty )$} if
\begin{enumerate}
\item $L^1\cap L^{\infty} (0,\infty ) \subseteq \BFS \subseteq L^1 +L^\infty (0,\infty )$, and 
\item $\BFS$ is an order ideal of $L^1 +L^\infty (0,\infty )$, that is, for every $f$, $g\in L^1 +L^\infty (0,\infty )$
\[
|g|\leq |f| \text{ and } f\in \BFS \quad \Rightarrow \quad g \in \BFS \text{ and } \Norm[\BFS]{g} \leq \Norm[\BFS]{f} .
\]
\end{enumerate}
 
This definition is consistent with Meyer-Nieberg \cite[Definition 1.1.5, p. 6]{MN91}, for instance, where it is used in the sense of complete K\"othe function space, but it differs from the definition in Bennett \& Sharpley \cite[Definition 1.3, p. 3]{BeSh88}, where in addition a Fatou property is required to hold. Note that in Brudny{\u\i} \& Krugljak \cite{BrKr91}, Banach function spaces are called Banach lattices.  

Typical Banach function spaces are the polynomially weighted spaces 
\[
\BFS = L^p(0,\infty , t^{p(1-\theta) - 1}\ud t)  \mbox{ for } p\in [1,\infty ) \mbox{ and } \theta\in (0,1) .
\]
The polynomial weights 
\[
 w_{\theta ,p} (t) = t^{p(1-\theta) - 1} \quad (\theta\in (0,1)) 
\]
appearing here are (restrictions of) typical Muckenhoupt $A_p$-weights on the real line for each $p\in [1,\infty )$. For an arbitrary Muckenhoupt $A_p$-weight $w$ on the real line,  the space
\[
 \BFS = L^p (0,\infty , w(t) \ud t)
\]
is a Banach function space, too. Other examples of Banach function spaces are the Lorentz spaces $L^{p,q} (0,\infty )$, the Orlicz spaces $L^\Phi (0,\infty )$, general rearrangement invariant Banach function spaces over $(0,\infty )$, weighted versions of these spaces as well as intersections and sums of such spaces. 

Given a Banach space $\X$ and a Banach function space $\BFS$ over $(0,\infty )$, we define the vector-valued variant of $\BFS$ by setting
\[
\BFS (\X ) :=\left\{ f: (0,\infty ) \rightarrow \X \text{ measurable} \st  \norm[\X]{f} \in \BFS \right\} .
\]
This space is a Banach space for the norm $\Norm[\BFS(\X)]{f} :=\Norm[\BFS]{\norm[\X]{f}}$ (for simplicity, as it is usually done, we identify the space of measurable functions with the space of equivalence classes of measurable functions).

\subsection{The Hardy operator.} The {\em Hardy operator} is the integral operator which assigns to every function $f\in L^1 +L^\infty (0,\infty )$ the function $Pf$ given by
	\begin{equation}\label{hardyop}
	Pf(t):=\frac{1}{t}\int_0^t f (s)\,\ud s  \quad (f\in L^1 +L^\infty (0,\infty ), \, t\in (0,\infty )) . 
\end{equation}
In a few of our main results, we need that the Hardy operator maps a Banach function space $\BFS$ into itself; by the closed graph theorem, the Hardy operator is then bounded on $\BFS$. The Hardy operator maps $L^1 (0,\infty )$ into the larger Lorentz space $L^{1,\infty} (0,\infty )$ (the weak-$L^1$ space), which in itself is not an intermediate space between $L^1$ and $L^\infty$, and which is only a quasi Banach space. Moreover, the Hardy operator obviously maps $L^\infty (0,\infty )$ into itself. By the Marcinkiewicz interpolation theorem \cite[Theorem 4.13, p.225]{BeSh88}, for every $p\in (1,\infty ]$ and every $q\in [1,\infty ]$ the Hardy operator therefore maps the Lorentz space $L^{p,q} (0,\infty )$ into itself, and in particular it maps $L^p (0,\infty )$ into itself. More generally, by \cite[Theorem 5.15, p.150]{BeSh88}, the Hardy operator is bounded on an unweighted rearrangement invariant Banach function space over $(0,\infty )$ if and only if the lower Boyd index of this Banach function space is strictly larger than $1$ (here, in this reformulation of \cite{BeSh88}, the Boyd indices are elements of $[1, \infty ]$ while in the definition of \cite{BeSh88} the Boyd indices are their reciprocals and thus elements of $[0, 1]$). Finally, by \cite[Theorem 7.3]{Du01}, the Hardy operator on a weighted $L^p$ space over $(0,\infty )$ ($p> 1$) is bounded if the weight is a Muckenhoupt $A_p$-weight. 

\subsection{Functions of bounded variation and absolutely continuous functions} 

Let $\X$ be a Banach space, and let $\tau\in (0,\infty ]$. For a function $f:[0,\tau )\to \X$ we define the {\em variation function} $\Var_f : [0,\tau ) \to [0,\infty] $ by
\[
Var_f (t) := \sup_{0\leq s_0 \leq \dots \leq s_n\leq t} \sum_{i=0}^{n-1} \norm[\X]{f(s_{i+1}) - f(s_i)} .
\] 
The variation function is increasing. The function $f$ is {\em of bounded variation} if its variation function $\Var_f$ is bounded on $[0,\tau )$.

A function $f:[0,\tau )\to \X$ is {\em absolutely continuous} if for every $\varepsilon >0$ there exists $\delta >0$ such that for every finite family $([a_i,b_i])_{1\leq i\leq n}$ of mutually disjoint subintervals of $[0,\tau )$ 
\[
\sum_{i=1}^n (b_i-a_i) \leq \delta \quad \Rightarrow \quad \sum_{i=1}^n (\Var_f (b_i) - \Var_f (a_i)) \leq \varepsilon .
\]
We denote by $AC([0,\tau );\X )$ the space of all absolutely continuous functions, and by $AC_{loc} ([0,\tau );\X )$ the space of all locally absolutely continuous functions, that is, of all functions $[0,\tau )\to\X$ which are absolutely continuous on every compact subinterval of $[0,\tau )$.

Every absolutely continuous function on a finite interval is of bounded variation. Moreover, if $f$ is absolutely continuous, then the variation function $\Var_f$ is absolutely continuous, too. By the Radon-Nikodym theorem, for every finite $\tau$, $AC ([0,\tau )) = W^{1,1} (0,\tau )$, that is, every absolutely continuous,  scalar valued function is weakly differentiable (in the sense of Sobolev) with weak derivative in $L^1 (0,\tau )$. For vector-valued functions, this equality is no longer true in general; the space $W^{1,1} (0,\tau ;\X)$ always is a closed subspace of $AC ([0,\tau );\X)$ (still, $\tau$ is finite), and for every $f\in W^{1,1} (0,\tau ;\X)$ one has 
	\begin{equation} \label{eq.dotf.varf}
	\int_0^t\norm[\X]{\dot f(s)}ds = \Var_f(t) ,
	\end{equation}
	but the inclusion of $W^{1,1} (0,\tau ;\X) $ into $AC ([0,\tau );\X)$ may be strict. A Banach space $\X$ has the {\em Radon-Nikodym property} if $W^{1,1} (0,\tau ;\X) = AC([0,\tau );\X)$, that is, every absolutely continuous function $f:[0,\tau )\to \X$ admits a weak derivative in $L^1 (0,\tau ;\X)$ (the property is independent of $\tau$). Every reflexive Banach space and every separable dual space has the Radon-Nikodym property (see \cite{ABHN01,DiUh77}). 
	
	Let $\BFS\subseteq L^1+L^\infty (0,\infty )$ be a Banach function space. We define the homogeneous Sobolev space
	\[
	\mathring{W}^{1,\BFS} (0,\infty ;\X) := \{ f\in W^{1,1}_{loc} (0,\infty ;\X) \st \norm[\X]{\dot f(\cdot )} \in \BFS \} 
	\]
	and the homogeneous space of absolutely continuous functions
	\[
	\mathring{AC}^\BFS ([0,\infty );\X) := \{ f\in AC_{loc} ([0,\infty );\X) \st \frac{\ud}{\ud t} \Var_f\in \BFS \} .
	\]
	By \eqref{eq.dotf.varf}, $\mathring{W}^{1,\BFS} (0,\infty ; \X)$ is a closed subspace of $\mathring{AC}^\BFS ([0,\infty );\X)$, and both spaces are equal if and only if $\X$ has the Radon-Nikodym property.  

\begin{lemma} \label{lem.ac}
Let $\BFS$ be a Banach function space over $(0,\infty )$ such that the Hardy operator $P$ is bounded on $\BFS$ and let $u \in \mathring{AC}^\BFS ([0,\infty );\X)$. Then
\[
t \mapsto \left\| \frac{u(t) - u(0)}{t} \right\|_\X \in \BFS .
\]
\end{lemma}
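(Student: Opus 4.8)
The plan is to dominate the function $t\mapsto\norm[\X]{(u(t)-u(0))/t}$ pointwise by the Hardy transform of the derivative of the variation function, and then to invoke the order ideal property in the definition of a Banach function space.

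First, set $g:=\frac{\ud}{\ud t}\Var_u$. By the very definition of $\mathring{AC}^\BFS([0,\infty);\X)$ we have $g\in\BFS$, and since $\Var_u$ is increasing, $g\ge 0$ almost everywhere. Since $u\in AC_{loc}([0,\infty);\X)$, the variation function $\Var_u$ is locally absolutely continuous, and $\Var_u(0)=0$; hence, by the fundamental theorem of calculus for scalar absolutely continuous functions recalled above (for finite $\tau$, $AC([0,\tau))=W^{1,1}(0,\tau)$), one has $\Var_u(t)=\int_0^t g(s)\,\ud s$ for every $t\in[0,\infty)$. Next, applying the definition of $\Var_u$ to the partition $\{0,t\}$ gives $\norm[\X]{u(t)-u(0)}\le\Var_u(t)$ for every $t>0$, and therefore
\[
\left\| \frac{u(t)-u(0)}{t}\right\|_\X \le \frac{1}{t}\int_0^t g(s)\,\ud s = (Pg)(t) ,
\]
where $P$ is the Hardy operator of \eqref{hardyop}. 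Since $g\in\BFS$ and $P$ is bounded on $\BFS$ by hypothesis, $Pg\in\BFS$ (and $Pg\ge 0$ because $g\ge 0$).

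Finally, the function $t\mapsto\norm[\X]{(u(t)-u(0))/t}$ is measurable, being assembled from the continuous function $u$, and it is pointwise dominated in modulus by $Pg\in\BFS$; the order ideal property of $\BFS$ then yields that it belongs to $\BFS$, together with the (incidental) estimate $\Norm[\BFS]{t\mapsto\norm[\X]{(u(t)-u(0))/t}}\le\Norm[\BFS]{Pg}\le\norm[\cL(\BFS)]{P}\,\Norm[\BFS]{g}$. There is essentially no obstacle here: the only two points requiring a little care are the passage from local absolute continuity of $u$ to the integral representation $\Var_u(t)=\int_0^t g(s)\,\ud s$ — which rests on the absolute continuity of the variation function recalled in the preliminaries — and the measurability of the dominated function, both of which are routine.
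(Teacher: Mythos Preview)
Your proof is correct and follows essentially the same route as the paper's: dominate $\norm[\X]{(u(t)-u(0))/t}$ by $Pg$ with $g=\frac{\ud}{\ud t}\Var_u\in\BFS$, then invoke boundedness of $P$ on $\BFS$ and the ideal property. The only differences are expository---you spell out $\Var_u(0)=0$, the measurability check, and the resulting norm estimate---but the argument is the same.
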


\begin{proof}
Since $u \in \mathring{AC}^\BFS ([0,\infty );\X)$ then $\Var_u \in \mathring{W}^{1,\BFS}(0,\infty )$. Hence, there is a $g \in \BFS$ such that
\[
\Var_u (t) = \int_0^t g(s) \, \ud s \quad \text{for } t \in [0,\infty ).
\]
For $t \in (0,\infty )$ we have
\begin{align*}
\frac{1}{t} \left\| u(t) - u(0) \right\|_\X &\le \frac{1}{t} \Var_u (t) = \frac{1}{t} \int_0^t g(s) \,\ud s = P g(t).
\end{align*}
Since the Hardy operator is bounded on $\BFS$, $Pg \in \BFS$ and the claim follows from the ideal property of $\BFS$.
\end{proof}

\section{Interpolation of functions}

Our basic setting for interpolation is the following. Let $\X$ be a Banach space. Throughout we consider measurable functions $\cN : \X\to [0,\infty ]$. Given such a function, we call 
\[
\dom{\cN} := \{ x\in \X \st \cN (x) <\infty \}
\]
its {\em effective domain}.

We call a pair $\cN = (\cN_0 ,\cN_1 )$ of measurable functions $\cN_0$, $\cN_1 : \X\to [0,\infty ]$ an {\em interpolation couple of functions}. 

\begin{example}[Linear interpolation] \label{ex.linear}
Let $(\X_0,\X_1)$ be an interpolation couple of Banach spaces (see, for example, \cite[Section 2.3]{BeLo76}), that is, both $\X_0$ and $\X_1$ are continuously embedded into a Hausdorff topological vector space. In this Hausdorff topological vector space one can define the {\em sum space} $\X := \X_0 +\X_1$, which is a Banach space for the norm
\[
\norm[\X]{x} = \inf \{ \norm[\X_0]{x_0} + \norm[\X_1]{x_1} \st x_i\in \X_i \text{ and } x = x_0 +x_1\} . 
\]
On the sum space we define two functions $\cN_0$, $\cN_1 : \X\to [0,\infty ]$ by
\[
\cN_i (x)  = \begin{cases} 
\norm[\X_i]{x} & \text{if } x\in \X_i \subseteq \X , \\[2mm]
\infty & \text{if } x\in \X\setminus \X_i . 
\end{cases}
\]
Both functions $\cN_0$ and $\cN_1$ satisfy all properties of a norm with the exception that both functions actually take values in $[0,\infty ]$. By abuse of language, we still call such functions {\em norms}.
The couple $\cN = (\cN_0 ,\cN_1)$ is an interpolation couple of functions. We shall see below that this interpolation couple of {\em norms} corresponds to the standard interpolation couple of {\em Banach spaces} from linear interpolation theory. 
\end{example}

\begin{example}[Nonlinear interpolation] \label{ex.nonlinear}
Still in the case, when $(\X_0,\X_1)$ is an interpolation couple of Banach spaces, one may also consider the functions $\cN_0$, $\cN_1 : \X\to [0,\infty ]$ given by
\[
\cN_i (x)  = \begin{cases} 
\norm[\X_i]{x}^{\alpha_i} & \text{if } x\in \X_i \subseteq \X , \\[2mm]
\infty & \text{if } x\in \X\setminus \X_i ,
\end{cases}
\]
where $\alpha_0$, $\alpha_1\in (0,\infty )$, that is, the $\cN_i$ are powers of norms. This situation was called {\em nonlinear interpolation} by Peetre \cite{Pe70} and Tartar \cite{Tar72,Tar72a}. The effective domains of these functions $\cN_i$ are still Banach spaces, but the functions themselves are only quasi-norms \cite[Section 3.11]{BeLo76}.
\end{example}

\begin{example}[Interpolation and $m$-accretive operators] \label{ex.accretive}
Let $\X$ be a Banach space and let $A\subseteq \X\times \X$ be an {\em $m$-accretive operator of type $\omega\in\R$}, that is, $A$ is {\em accretive of type $\omega$} in the sense that, for every $(x,f)$, $(\hat{x},\hat{f})\in A$ and for every $\lambda >0$ with $\lambda\omega <1$, 
\[
\norm[\X]{x-\hat{x} + \lambda (f-\hat{f})} \geq (1-\lambda\omega ) \, \norm[\X]{x-\hat{x}} ,  
\]
and, moreover, $A$ satisfies the   condition
\[
\rg {(I+\lambda A)} = \X \text{ for every } \lambda >0 \text{ with } \lambda\omega <1 .
\]
In Section 4 below, we consider the interpolation couple ${\cN}^{A} = (\cN_0 , \cN_1 )$, where $\cN_0 = \norm[\X]{\cdot}$ is the norm in $\X$ and 
\[
\cN_1 (x) = |A x| := \inf \{ \norm[\X]{f} \st (x,f)\in A\} ,
\]
is the set norm of $ A x$, with the usual interpretation $\inf \emptyset = \infty$. Note here, that $ Ax := \{ f\in\X \st (x,f)\in A\}$ is a subset of $\X$, and that for any subset $B\subseteq\X$, we call 
\[
|B| := \inf \{ \norm[\X]{x} \st x\in B\}
\]
its {\em set norm}.
\end{example}

\begin{example}[Interpolation and convex functions]
    Let ${\mathcal E} : H\to [0 , \infty ]$ be a proper, lower semicontinuous, convex function on a Hilbert space $H$. Proper means here that the effective domain of $\mathcal E$ is nonempty. Assume, for simplicity, that ${\mathcal E}$ admits a global minimizer $\bar{x}$ and that ${\mathcal E} (\bar{x}) =0$. Otherwise, replace the function ${\mathcal E}$ by the function ${\mathcal E} + \frac12 \norm[H]{\cdot}$, which, by lower semicontinuity and convexity, is {\em coercive} in the sense that its sublevel sets are bounded in $H$. By a well known result from the calculus of variations, this rescaled function thus admits a global minimizer $\bar{x}\in H$, and changing the rescaled function by an appropriate constant shows that its minimum is $0$. We consider now the interpolation couple $\cN = (\cN_0 ,\cN_1 )$, where $\cN_0 = \norm[H]{\cdot}$ is the norm in $H$ and the function $\cN_1$ is is the square root of${\mathcal E}$, that is, $\cN_1 = \sqrt{{\mathcal E} (\cdot )}$. If $\mathcal{E}$ actually is a quadratic function, then $\cN_1$ is a seminorm. 
\end{example}

 In the following subsections we introduce and study  interpolation functions for a given interpolation couple of functions, with respect to the $K$-method, the mean method and finally the trace method. Each of these methods are analogues, in our general setting of functions, of the classical linear interpolation methods of the same names. The interpolation function with respect to the $K$-method, introduced in Sub-section \ref{kmethod} below,  will be used  the most in the subsequent sections.

	\subsection{The $K$-method} \label{kmethod}
	
	The $K$-method is perhaps the most popular method in classical real interpolation theory of Banach spaces, and it is also the most important method for us. It is relevant for the identification of interpolation sets or, more precisely, for the identification of effective domains of interpolation functions. It admits the following variant in the context of interpolation of functions. Let $\cN = (\cN_0 ,\cN_1 )$ be an interpolation couple of functions on a Banach space $\X$, and let $\BFS$ be a Banach function space over the interval $(0,\infty )$. We first define, for given $x\in \X$, the $K$-function  
	\[
	K(x,t) := \inf \{ \cN_0 (x - v) + t \, \cN_1 (v) \st v \in \X \} \quad (t\in (0,\infty )),
	\]
	and we then define the function $\cN_{\BFS} : \X\to [0,\infty ]$ by
	\begin{align*}
	\cN_\BFS (x) := \Norm[\BFS]{ t\mapsto \frac{K(x,t)}{t} }  ,
	\end{align*}
	with the interpretation that the right hand side is $+\infty$ if the function under the norm is not in $\BFS$. We call $\cN_\BFS$ the {\em interpolation function with respect to the $K$-method} associated with the Banach function space $\BFS$. For any $\tau >0$ we also define the following ``finite'' variant $\cN_{\BFS}^{\tau} : \X\to [0,\infty )$ of this interpolation function by setting
	\begin{equation*}
	    \cN_{\BFS}^{\tau} (x)  := \Norm[\BFS]{t\mapsto \frac{K(x,t)}{t}\chi_{(0,\tau)(t)}} .
	\end{equation*}
	This interpolation function takes only values of the $K$-function for small $t$ into account. 
	
	\begin{remarks} \label{rem.k}
	(a) In the case of real interpolation between Banach spaces (see Example \ref{ex.linear}), the functions $\cN_0$ and $\cN_1$ are norms of Banach spaces $\X_0 := \dom{\cN_0}$ and $\X_1 := \dom{\cN_1}$, and it is straightforward to check that the interpolation functions $\cN_\BFS$ and $\cN_\BFS^\tau$ are norms, too. The effective domain of $\cN_\BFS$ is the classical real interpolation space defined by the $K$-method \cite[Section 3.1]{BeLo76}, \cite[Section 3.3]{BrKr91}. In the case when the Banach function space is the polynomially weighted $L^p$ space $\BFS = L^p(0,\infty ; t^{p(1-\theta) - 1}\ud t)$, then the effective domain of $\cN_\BFS$ is usually denoted by $(\X_0,\X_1)_{\theta,p}$; see also \cite[Proposition 1.13]{Lu09}, \cite[Theorem 1.8.2, p.~44]{Tr95}. 
	Note that the interpolation space $(\X_0,\X_1)_{\theta,p}$ is uniquely determined by the Banach spaces $\X_0$ and $\X_1$, as long as one considers only norms on $\X_0$ and $\X_1$ which are equivalent to $\cN_0$ and $\cN_1$, respectively. Every complete norm on $\X_0$ is equivalent to $\cN_0$ as long as the embedding of $\X_0$ into the sum space $\X = \X_0 +\X_1$ remains continuous. Of course, we assume that the topology on the sum space is fixed, and that the embeddings of $\X_i$ into $\X$ are continuous for $i=0$, $1$.
	
	Note the slightly different notation using the values $\theta$ and $p$ in the subscript instead of the space $\BFS$. Let us just mention that in Brudny \& Krugljak \cite[Section 3.3]{BrKr91} the definition of the $K$-function and the $K$-method is slightly different, the difference being the factor $\frac{1}{t}$ between the two definitions of the $K$-function.
	
	\vspace{9pt}
	(b) As a generalisation of the situation described in Remark \ref{rem.k} (a) above, one might want to define the {\em interpolation set} between the effective domains $\dom{\cN_0}$ and $\dom{\cN_1}$, associated with a Banach function space $\BFS$, as the effective domain of the interpolation function $\cN_{\BFS}$. In analogy with the classical, linear interpolation theory, a notation could be
	\[
	(\dom{\cN_0},\dom{\cN_1})_{\BFS} := \dom{\cN_{\BFS}} .
	\]
	In general, the notation of the interpolation set is however ambiguous, even for a fixed Banach function space $\BFS$. The interpolation set not only depends on the effective domains of the functions $\cN_0$ and $\cN_1$, but it really depends on the functions $\cN_0$ and $\cN_1$ themselves. In order to give an example, consider the space $\X = L^1(0,1)$, and the functions
	\[
	 \cN_0 (f) =  \Norm[L^1]{f} 
	\]
	and 
	\[
	 \cN_1 (f) = \begin{cases} 
	              \Norm[L^\infty]{f} & \text{if } f\in L^\infty (0,1) , \\[2mm]
	              +\infty & \text{otherwise. }
	             \end{cases}
	\]
Then $\cN_0$ and $\cN_1$ are norms with effective domains $\dom{\cN_0} = L^1  (0,1)$ and $\dom{\cN_1} = L^\infty (0,1)$. For $\BFS = L^p (0,\infty )$, the interpolation function $\cN_{\BFS}$ is again a norm, and its effective domain is the classical real interpolation space $(L^1 (0,1) , L^\infty (0,1))_{\frac{1}{p^*},p} =  L^p (0,1)$  \cite[Theorem 5.2.1]{BeLo76}. However, if we choose $\widehat{\cN}_0 (f) := \cN_0 (f)^\alpha$ for some $\alpha\in (0,1)$ and $\widehat{\cN}_1 (f) := \cN_1 (f)$, so that $\widehat{\cN}_0$ is a  power of the $L^1$-norm, then the effective domains of $\widehat{\cN}_i$ are equal to the effective domains of $\cN_i$, but the effective domain of the interpolation function $\widehat{\cN}_{\BFS}$ is the space $\underline{S}(( L^1(0,1),  L^\infty (0,1)  ),\overline{p}, \theta),$ one of the two spaces called {\em espaces de moyennes} (see Lions \& Peetre \cite{LiPe64}),  with $\overline{p} = ( \alpha p, p )$  and  $\theta = \frac{p - 1}{\alpha p} $ as in \cite[Section 3.12]{BeLo76}. By \cite[Theorem 3.12.1]{BeLo76}, $\underline{S}(( L^1(0,1),  L^\infty (0,1)  ),\overline{p}, \theta) = (L^1 (0,1) , L^\infty (0,1))_{\theta ,r}$ with $r = \frac{\alpha p}{1+\theta (\alpha -1)}$. This interpolation space is a Lorentz space different from $L^p (0,1).$

\vspace{9pt}

(c) A function $\cN : \X \to [0,\infty ]$ is {\em proper} if its effective domain is nonempty, that is, if it is not the constant function $\infty$. A necessary  condition for the interpolation functions $\cN_\BFS$ and $\cN_\BFS^\tau$ to be proper is the condition that there exists $x\in\X$ such that $t\mapsto \frac{K(x,t)}{t}$ (possibly multiplied with the characteristic function $\chi_{(0,\tau )}$) belongs to $\BFS$. Due to the assumption that $\BFS$ is a subset of $L^1 +L^\infty (0,\infty )$, a necessary condition for the properness of the interpolation functions therefore is the integrability of $t\mapsto \frac{K(x,t)}{t}$ on $(0,1)$ and (in case of the interpolation function $\cN_\BFS$) on $(1,\infty )$. For this, it is in turn necessary that $\liminf_{t\to 0+} K(x,t) = 0$ and (in case of the interpolation function $\cN_\BFS$) $\liminf_{t\to\infty} K(x,t) = 0$. By definition of the $K$-function, this implies $\inf \cN_0 =0$ and (in case of the interpolation function $\cN_\BFS$) $\inf \cN_1 = 0$. Without these additional assumptions on $\cN_0$ and, possibly, $\cN_1$, the interpolation functions can not be proper, no matter the choice of the Banach function space $\BFS$. 
	\end{remarks}
	
The following lemma is very much in the spirit of Remark \ref{rem.k} (c). It links the behaviour of the interpolation functions $\cN_\BFS$ and its "finite" variant $\cN_\BFS^\tau$.

	\begin{lemma}{\label{3.11}}
    Let $\cN= (\cN_0, \cN_1)$ be an interpolation couple of functions on a Banach space $\X,$ which satisfies the following properties:
    \begin{aufzi}
	       \item $\cN_0(x)$ is finite for all $x\in \X$, and
	       \item $\cN_1(v_0) =0$ at some $v_0\in \X$.
    \end{aufzi}
	Then, for every $\tau >0$, and for every Banach function space $\BFS$ over $(0,\infty )$ on which the Hardy operator is bounded,
	\begin{equation}{\label{eq 3.11.1}}
	\begin{split}
	    \cN^{\tau}_{\BFS} (x) \leq  \cN_{\BFS} (x)  \leq  \cN^{\tau}_{\BFS} (x) + \cN_0(x-v_0)\, \Norm[\BFS]{t \mapsto \frac{\chi_{(\tau,\infty)}(t)}{t}} .
	 \end{split}   
	\end{equation}
	In particular, for every $\tau>0$,
	\begin{equation*}
	    \dom{\cN^{\tau}_{\BFS}} = \dom{\cN_{\BFS}} 
	\end{equation*}
	\end{lemma}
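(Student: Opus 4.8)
The plan is to establish the two asserted inequalities separately and then read off the equality of effective domains. The first (left-hand) inequality is essentially free: on $(0,\infty)$ we have the pointwise bound $\left|\frac{K(x,t)}{t}\chi_{(0,\tau)}(t)\right|\le\left|\frac{K(x,t)}{t}\right|$, so if $t\mapsto\frac{K(x,t)}{t}$ lies in $\BFS$ then the ideal property of $\BFS$ gives $\cN^{\tau}_{\BFS}(x)\le\cN_{\BFS}(x)$, while if it does not lie in $\BFS$ then $\cN_{\BFS}(x)=\infty$ and there is nothing to prove.

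For the second (right-hand) inequality, the starting point is that, by hypothesis (b) and the definition of the $K$-function, $K(x,t)\le\cN_0(x-v_0)+t\,\cN_1(v_0)=\cN_0(x-v_0)$ for every $t\in(0,\infty)$, a quantity which is finite by hypothesis (a). Splitting $\frac{K(x,t)}{t}=\frac{K(x,t)}{t}\chi_{(0,\tau)}(t)+\frac{K(x,t)}{t}\chi_{(\tau,\infty)}(t)$, the second summand is dominated pointwise by $\cN_0(x-v_0)\,\frac{\chi_{(\tau,\infty)}(t)}{t}$. The one point that needs a short argument is that $t\mapsto\frac{\chi_{(\tau,\infty)}(t)}{t}$ actually belongs to $\BFS$ with finite norm, and this is exactly where the hypothesis on the Hardy operator enters: since $\chi_{(0,\tau)}\in L^1\cap L^\infty(0,\infty)\subseteq\BFS$ and $P\chi_{(0,\tau)}(t)=\frac{\tau}{t}$ for $t>\tau$, boundedness of $P$ on $\BFS$ gives $P\chi_{(0,\tau)}\in\BFS$, and applying the ideal property to the pointwise bound $\frac{\chi_{(\tau,\infty)}(t)}{t}\le\frac{1}{\tau}\,P\chi_{(0,\tau)}(t)$ shows $t\mapsto\frac{\chi_{(\tau,\infty)}(t)}{t}\in\BFS$ with $\Norm[\BFS]{t\mapsto\frac{\chi_{(\tau,\infty)}(t)}{t}}\le\frac{1}{\tau}\Norm[\BFS]{P\chi_{(0,\tau)}}<\infty$.

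It then remains to assemble the pieces. If $\cN^{\tau}_{\BFS}(x)=\infty$, the right-hand side of the second inequality is $\infty$ and the inequality is trivial. If $\cN^{\tau}_{\BFS}(x)<\infty$, then $\frac{K(x,t)}{t}\chi_{(0,\tau)}(t)\in\BFS$, and by the previous step $\frac{K(x,t)}{t}\chi_{(\tau,\infty)}(t)\in\BFS$ as well; hence $t\mapsto\frac{K(x,t)}{t}\in\BFS$, and the triangle inequality in the Banach space $\BFS$ together with the ideal property yields $\cN_{\BFS}(x)\le\cN^{\tau}_{\BFS}(x)+\cN_0(x-v_0)\Norm[\BFS]{t\mapsto\frac{\chi_{(\tau,\infty)}(t)}{t}}$. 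Finally, the equality of effective domains is immediate: the first inequality gives $\dom{\cN_{\BFS}}\subseteq\dom{\cN^{\tau}_{\BFS}}$, and the second inequality — using $\cN_0(x-v_0)<\infty$ for every $x\in\X$ by (a) together with the finiteness of the additive constant just established — gives the reverse inclusion $\dom{\cN^{\tau}_{\BFS}}\subseteq\dom{\cN_{\BFS}}$. I do not expect a genuine obstacle here; the only delicate spot is precisely the finiteness of the additive constant $\Norm[\BFS]{t\mapsto\frac{\chi_{(\tau,\infty)}(t)}{t}}$, which crucially uses the boundedness of the Hardy operator on $\BFS$ — the bare inclusion $\BFS\subseteq L^1+L^\infty(0,\infty)$ does not by itself force $t\mapsto\frac1t$ into $\BFS$ near infinity — and one must remember to isolate the case $\cN^{\tau}_{\BFS}(x)=\infty$ so that the triangle inequality is invoked only for genuine elements of $\BFS$.
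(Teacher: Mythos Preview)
Your proof is correct and follows essentially the same route as the paper: the ideal property for the first inequality, the pointwise bound $K(x,t)\le\cN_0(x-v_0)$ and a split over $(0,\tau)$ and $(\tau,\infty)$ for the second, and the identity $P\chi_{(0,\tau)}(t)=\chi_{(0,\tau)}(t)+\tau\,\frac{\chi_{(\tau,\infty)}(t)}{t}$ together with boundedness of $P$ on $\BFS$ to ensure the additive constant is finite. You are in fact slightly more careful than the paper in isolating the case $\cN^{\tau}_{\BFS}(x)=\infty$ before invoking the triangle inequality.
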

	
	\begin{proof}
	 Let $\BFS$ be a Banach function space over $(0,\infty )$. By the Banach ideal property of $\BFS$, 
	 \[
	  \Norm[\BFS]{t \mapsto \frac{K(x,t)}{t} \chi_{(0,\tau)}(t)} \leq \Norm[\BFS]{t \mapsto \frac{K(x,t)}{t} } ,
	 \]
	 and this is just the first inequality in \eqref{eq 3.11.1}. Next, for every $t\in (0,\infty )$,
	    \begin{align*}
	       \frac{ K(x,t)}{t} &= \inf\left\{\frac{\cN_0(x-v)}{t} +  \cN_1(v) \st v\in \X \right\}\\
	       &\leq \frac{\cN_0(x-v_0)}{t}.
	    \end{align*}
	    Then, for every $\tau >0$,
	    \begin{align*}
	    \Norm[\BFS]{t \mapsto \frac{K(x,t)}{t} } &\leq \Norm[\BFS]{t \mapsto \frac{K(x,t)}{t} \chi_{(0,\tau)}(t)} + \Norm[\BFS]{t \mapsto \frac{K(x,t)}{t} \chi_{(\tau,\infty)}(t)}   \\
	    &\leq \Norm[\BFS]{t \mapsto \frac{K(x,t)}{t} \chi_{(0,\tau)}(t)} + \cN_0(x-v_0)\, \Norm[\BFS]{t \mapsto \frac{\chi_{(\tau,\infty)}(t)}{t}} ,
	    \end{align*}
    and this is the second inequality.
    
    Clearly, the first inequality in \eqref{eq 3.11.1} implies $\dom{\cN^{\tau}_{\BFS}} \subseteq \dom{\cN_{\BFS}} $. The other containment follows on noting that the Hardy operator $P$ is bounded on $E$ and $P( \chi_{(0,\tau)})(t) = \chi_{(0,\tau)}(t) +  \tau \frac{\chi_{(\tau,\infty)}(t)}{t}$.
    
	\end{proof}

	\subsection{The mean method} 
	
	Let $\cN = (\cN_0 ,\cN_1 )$ be an interpolation couple of functions on a Banach space $\X$, and let $\BBFS = (\BFS_0, \BFS_1)$ be a couple of Banach function spaces over the interval $(0,\infty )$.  Further, let $G$ be any subspace of $L^0 (0,\infty ;\X )$. 
	We then define a new function $\cN_{\BBFS}^G : \X\to [0,\infty ]$ by
	\begin{align*}
	\cN_\BBFS^G (x) := \inf \{ \Norm[\BFS_0]{ t\mapsto \frac{\cN_0 (x - u (t ))}{t}} + & \Norm[\BFS_1]{ t\mapsto \cN_1 (u (t )) } \st u \in G \}.
	\end{align*}
	We call $\cN_\BBFS^G$ the {\em interpolation function with respect to the mean method}, associated with the couple $\BBFS = (\BFS_0 ,\BFS_1 )$ and associated with $G$. Typical examples of spaces $G$  in the following are the full space of measurable functions ( we  write $\cN_{\BBFS}^{L^0}$ in short for the interpolation function) and the space of continuous functions $G = C ([0,\infty ) ;\X )$ (we  write $\cN_{\BBFS}^{C}$ for the resulting interpolation function). We further define a ``finite'' version of these interpolation functions by setting, for every $\tau \in (0,\infty )$,
	\begin{align*}
	\cN_\BBFS^{G,\tau} (x) := \inf \{ \Norm[\BFS_0]{ t\mapsto \frac{\cN_0 (x - u (t ))}{t}\, \chi_{(0,\tau )} (t) } + & \Norm[\BFS_1]{ t\mapsto \cN_1 (u (t )) \, \chi_{(0,\tau )} (t) } \st u \in G \}.
	\end{align*}
	Clearly, for every $x\in\X$ and for every $\tau\in (0,\infty )$, 
    \begin{align*}
       & \cN_{\BBFS}^{L^0} (x) \leq \cN_{\BBFS}^{C}(x) \text{ and} \\ 
       & \cN_{\BBFS}^{L^0,\tau} (x) \leq \cN_{\BBFS}^{C,\tau} (x) .
    \end{align*}
	
	\begin{remarks} 
	(a) Again, in the special case when $\cN_0$ and $\cN_1$ are norms, then the interpolation functions $\cN_{\BBFS}^{G}$ and $\cN_{\BBFS}^{G,\tau}$ are norms, too. The effective domain of $\cN_{\BBFS}^{L^0}$ is the classical interpolation space defined by the mean method; its definition goes back to Lions and Peetre \cite{LiPe64}.
	\vspace{9pt}
	
	(b) As in the case of the $K$-method, one might want to define the {\em interpolation set with respect to the mean method} between the effective domains $\dom{\cN_0}$ and $\dom{\cN_1}$, associated with the Banach function space $\BFS$, as the effective domain of the interpolation function $\cN_{\BBFS}^G$: 
	\[
	(\dom{\cN_0},\dom{\cN_1})_{\BBFS}^G := \dom{\cN_{\BBFS}^G} .
	\]
	The same remark as in Remark \ref{rem.k} (b) applies about the ambiguity of the interpolation set, which does not depend just on $\dom{\cN_0}$ and $\dom{\cN_1}$, but also on the functions $\cN_0$ and $\cN_1.$  One may repeat the example from Remark \ref{rem.k} (b), or alternatively, use  Lemma \ref{lm 3.10} below,   stating the equivalence of the mean method and the $K$-method. 
\vspace{9pt}

(c)  A necessary condition for $\cN_\BFS^C$ (or $\cN_\BFS^{C,\tau}$) to be finite is the condition that the norm 
\[
 \Norm[\BFS_0]{ t\mapsto \frac{\cN_0 (x - w (t))}{t} \chi_{(0,\tau )} (t) }
\]
is finite. If $\cN_0$ is a norm on a Banach space $\X_0\subseteq\X$, then by Lemma \ref{lem.ac} this norm is for example finite if $u\in\mathring{AC} ([0,\infty );\X )$. 
\end{remarks}

If $\BFS_0 = \BFS_1 = \BFS$, then we can compare the mean method with the $K$-method.  In the following lemma, we identify a single Banach function space $\BFS$ over the interval $(0,\infty )$ with the couple $\BBFS = (\BFS , \BFS)$, that is, we put $\BFS_0 := \BFS_1 := \BFS$ in the mean method. 
	
	\begin{lemma}[Equivalence of $K$-method and mean method] \label{lem.k-mean} {\label{lm 3.10}}
		Let $\cN = (\cN_0,\cN_1 )$ be an interpolation couple of functions on a Banach space $\X$, and let $\BFS$ be a Banach function space over the interval $(0,\infty )$. Then, for every $\tau\in (0,\infty )$,
		\begin{align*}
		& \cN_\BFS \leq \cN_\BBFS^{L^0} \leq 2 \, \cN_\BFS \text{ and} \\
		& \cN_\BFS^\tau \leq \cN_\BBFS^{L^0,\tau} \leq 2 \, \cN_\BFS^\tau
		\end{align*}
		and in particular,
		\begin{align*}
		& \dom{\cN_{\BFS}} = \dom{\cN_{\BBFS}^{L^0}} \text{ and} \\
		& \dom{\cN_{\BFS}^\tau} = \dom{\cN_{\BBFS}^{L^0,\tau}}.
		\end{align*}
	\end{lemma}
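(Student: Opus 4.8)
The plan is to prove the two displayed chains of inequalities pointwise in $x\in\X$ and then read off the equality of effective domains. Throughout I would use that, for fixed $x$, the map $t\mapsto K(x,t)$ is an infimum of the affine functions $t\mapsto\cN_0(x-v)+t\cN_1(v)$, hence concave, increasing, and in particular measurable; that if $\cN_\BFS(x)=\infty$ (resp. $\cN_\BFS^\tau(x)=\infty$) the corresponding inequality is vacuous; and that if $\cN_\BFS(x)<\infty$ then $t\mapsto K(x,t)/t\in\BFS$ forces $K(x,\cdot)<\infty$ on all of $(0,\infty)$ (resp. on $(0,\tau)$) by monotonicity, since an increasing function that is finite a.e. on an interval is finite everywhere on it.

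For the lower bounds $\cN_\BFS\le\cN_\BBFS^{L^0}$ and $\cN_\BFS^\tau\le\cN_\BBFS^{L^0,\tau}$ I would argue directly: for every $u\in L^0(0,\infty;\X)$ and every $t$, the definition of $K$ gives $K(x,t)/t\le\cN_0(x-u(t))/t+\cN_1(u(t))$ (and the same after multiplying by $\chi_{(0,\tau)}$); applying $\|\cdot\|_\BFS$ together with the ideal property and the triangle inequality of $\BFS$, and then infimizing over $u$, yields the estimate.

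The upper bounds are the substantial part, and here I would deliberately avoid measurable selection theorems (awkward for a general, possibly nonseparable, $\X$) in favour of a geometric discretization. Fix $\delta\in(0,1)$ and $\eta>0$, put $I_n:=[(1+\delta)^n,(1+\delta)^{n+1})$ for $n\in\Z$, and for each $n$ choose $v_n\in\X$ with $\cN_0(x-v_n)+(1+\delta)^n\cN_1(v_n)\le(1+\delta)\,K(x,(1+\delta)^n)+\eta\,2^{-|n|}(1+\delta)^n$, which is possible because the right-hand side exceeds $K(x,(1+\delta)^n)<\infty$. Then $u:=\sum_{n}v_n\chi_{I_n}$ is a countably valued, hence measurable, competitor in the mean method, and for $t\in I_n$ one replaces $(1+\delta)^n$ by $t$ at the cost of a single factor $1+\delta$ (using $\cN_0,\cN_1\ge0$) and divides by $t\ge(1+\delta)^n$ to obtain
\[
\frac{\cN_0(x-u(t))}{t}+\cN_1(u(t))\ \le\ (1+\delta)^2\,\frac{K(x,t)}{t}+(1+\delta)\,\eta\,g(t),\qquad g:=\sum_{n\in\Z}2^{-|n|}\chi_{I_n}.
\]
Since both summands on the left are nonnegative, each is individually bounded by the right-hand side; moreover $g\in L^1\cap L^\infty(0,\infty)\subseteq\BFS$, because $\Norm[\infty]{g}=1$ and the series for $\Norm[1]{g}$ converges for $0<\delta<1$. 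Hence, applying $\|\cdot\|_\BFS$ with the ideal property, the triangle inequality and homogeneity,
\[
\Norm[\BFS]{\tfrac{\cN_0(x-u(\cdot))}{\cdot}}+\Norm[\BFS]{\cN_1(u(\cdot))}\ \le\ 2(1+\delta)^2\,\cN_\BFS(x)+2(1+\delta)\,\eta\,\Norm[\BFS]{g}.
\]
Infimizing over $u$ and then letting $\eta\to0$ and $\delta\to0$ gives $\cN_\BBFS^{L^0}(x)\le2\,\cN_\BFS(x)$. For the finite variant I would run the identical argument after inserting $\chi_{(0,\tau)}$ everywhere (taking $v_n:=0$ when $I_n\subseteq[\tau,\infty)$, and using $\cN_\BFS^\tau(x)<\infty$ to guarantee $K(x,(1+\delta)^n)<\infty$ for the remaining $n$). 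Finally, the two sandwich estimates give $\cN_\BFS(x)<\infty\iff\cN_\BBFS^{L^0}(x)<\infty$ (and likewise with $\tau$), which is the asserted identity of effective domains.

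I expect the one genuinely delicate point to be engineering the competitor $u$ so that the constant comes out as exactly $2$: this forces the splitting of $\cN_0(x-u(t))+t\,\cN_1(u(t))$ into its two halves to be performed \emph{after} dividing by $t$, forces the discretization parameter $\delta$ to be sent to $0$, and — because the accumulated errors $\eta\,2^{-|n|}(1+\delta)^n$ must assemble into an element of $\BFS$ of small norm — dictates the weight $2^{-|n|}$ and the auxiliary function $g$; the inclusion $L^1\cap L^\infty(0,\infty)\subseteq\BFS$ is precisely what keeps everything valid for an arbitrary Banach function space $\BFS$, with no boundedness of the Hardy operator needed.
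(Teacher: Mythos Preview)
Your proof is correct and follows essentially the same strategy as the paper's own proof: both arguments establish the lower bound directly from the definition of $K$ and, for the upper bound, discretize $(0,\infty)$ geometrically with ratio $1+\varepsilon$ (your $1+\delta$), choose near-minimizers $v_n$ of the $K$-functional at the partition points, assemble them into a step-valued competitor $u$, and absorb the accumulated slack into a function belonging to $L^1\cap L^\infty(0,\infty)\subseteq\BFS$. The only differences are cosmetic: the paper uses the error weight $\varepsilon\min\{t_n,1/t_n\}$ and the auxiliary function $t\mapsto\min\{1,1/t^2\}$, whereas you use $\eta\,2^{-|n|}(1+\delta)^n$ and the step function $g=\sum_n 2^{-|n|}\chi_{I_n}$; both serve exactly the same purpose and lead to the same constant $2$ after sending the discretization parameter and the slack to zero.
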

	
	\begin{proof}
    Let $x\in \X$, and let $\varepsilon >0$ be given. Let $t_n := (1+\varepsilon )^{-n}$ ($n\in\Z$), so that $t_0 = 1$, $t_n > t_{n+1}$ for every $n$, and $\lim_{n\to\infty} t_n = 0$, $\lim_{n\to -\infty} t_n = \infty$. For every $v \in \X$ and for every $t\in (t_{n+1},t_n]$ one has
    \begin{align*}
    K(x,t_{n+1}) & \leq \cN_0 (x - v) + t_{n+1} \, \cN_1 (v) \\
    & \leq \cN_0 (x - v) + t \, \cN_1 (v) .
    \end{align*}
    Taking the infimum on the right hand side of this inequality yields
    \[
    K(x,t_{n+1}) \leq K(x,t) .
    \]
    Similarly, for every $t\in (t_{n+1} ,t_n]$,
    \begin{align*}
    K(x,t) & \leq \cN_0 (x - v) + t \, \cN_1 (v) \\
    & \leq (1+\varepsilon) \, (\cN_0 (x - v) + t_{n+1} \, \cN_1 (v) ) ,
    \end{align*}
    and therefore 
    \[
    K(x,t ) \leq (1+\varepsilon) \, K(x,t_{n+1}) .
    \]
    Now, for every $n\in\Z$ there exists a $v_n \in \X$ such that
    \[
    K (x,t_n) \leq \cN_0 (x - v_n) + t_n \, \cN_1 (v_n ) \leq K(x,t_n) + \varepsilon \, \min \{ t_n , 1/t_n \} .
    \]
    Set
    \[
    u (t) = v_n \text{ for } t \in (t_{n+1} , t_n] .
    \]
    Then $u : (0,\infty )\to \X$ is measurable and for every $t \in (t_{n+1},t_n]$,
    \begin{align*}
    \cN_0 (x - u (t)) = \cN_0 (x - v_n) & \leq K(x,t_n) + \varepsilon \, \min \{ t_n ,1/t_n \} \\
    & \leq (1 + \varepsilon ) ( K(x,t) + \varepsilon \, \min \{ t ,1/t \} ) , 
    \end{align*}
    and similarly
    \begin{align*}
    \cN_1 (u (t)) = \cN_1 (v_n) & \leq \frac{K(x,t_n)}{t_n} + \varepsilon \, \min \{ 1 , 1/t_n^2 \} \\
    & \leq (1 + \varepsilon ) ( \frac{K(x,t)}{t} + \varepsilon \, \min \{ 1, 1/t^2\} ) .
    \end{align*}
    Hence,
    \[
    \Norm[\BFS]{ t\mapsto \frac{\cN_0 (x - u (t))}{t} } \leq (1+\varepsilon ) \, \left( \Norm[\BFS]{ t\mapsto \frac{K(x,t)}{t} } + \varepsilon \, \Norm[\BFS]{ t\mapsto \min \{ 1 ,1/t^2\}  } \right)
    \]
    and 
    \[
    \Norm[\BFS]{ t\mapsto \cN_1 (u (t)) } \leq (1+\varepsilon ) \, \left( \Norm[\BFS]{ t\mapsto \frac{K(x,t)}{t} } + \varepsilon \, \Norm[\BFS]{ t\mapsto \min \{ 1 ,1/t^2\}  } \right) ,
    \]
    so that 
    \begin{align*}
    \cN_\BBFS^{L^0} (x) & \leq \Norm[\BFS]{ t\mapsto \frac{\cN_0 (x - u (t))}{t} } +  \Norm[\BFS]{ t\mapsto \cN_1 (u (t)) } \\
    & \leq 2\, (1+\varepsilon ) \, (\cN_\BFS (x) +  \varepsilon \, \Norm[\BFS]{ t\mapsto \min \{ 1 ,1/t^2 \}  } ) .
    \end{align*}
	The function $t\mapsto \min \{ 1 ,1/t^2 \}$ belongs to $L^1 \cap L^\infty (0,\infty )$, and therefore also to $\BFS$. Its norm in $\BFS$ is therefore finite. Since $\varepsilon >0$ is arbitrary, this  proves one inequality. For the other inequality, note that for every $\varepsilon >0$ there exists a measurable function $u : [0,\infty )\to \X$ such that
    \[
    \cN_\BBFS^{L^0} (x) \leq \Norm[\BFS]{ t\mapsto \frac{\cN_0 (x - u(t))}{t} } + \Norm[\BFS]{ t\mapsto \cN_1 (u (t)) } \leq \cN_\BBFS^{L^0} (x) + \varepsilon .
    \]
    Clearly, by definition of the $K$-functional, for every $t\in [0,\infty )$,
    \[
    \frac{K(x,t)}{t} \leq \frac{\cN_0 (x - u(t))}{t} + \cN_1 (u(t)) , 
    \]
    so that, by the triangle inequality,
    \begin{align*}
    \cN_\BFS (x) & = \Norm[\BFS]{ t\mapsto \frac{K(x,t)}{t} } \\
    & \leq \Norm[\BFS]{ t\mapsto \frac{\cN_0 (x - u(t))}{t} } + \Norm[\BFS]{ t\mapsto \cN_1 (u(t)) } \\
    & \leq \cN_\BBFS^{L^0} (x) + \varepsilon .
    \end{align*}
    Since $\varepsilon >0$ is arbitrary, this implies the second inequality. The corresponding statements for the ``finite'' interpolation functions are proved as above; it suffices to multiply the pointwise estimates by the characteristic function $\chi_{(0,\tau )}$ before taking the norm in $\BFS$.
	\end{proof}

The following theorem is a justification to consider domains of interpolation functions as {\em interpolation sets}. 

\begin{theorem}[Interpolation theorem] \label{lem.mean1}
    Let $\X$ and $\Y$ be two Banach spaces, $\cN = (\cN_0 ,\cN_1)$ an interpolation couple of functions on $\X$, $\cM = (\cM_0 ,\cM_1 )$ an interpolation couple of functions on $\Y$, and let $T : \X \to \Y$ be a measurable function. Suppose that there exist constants $L\geq 0$, $a>0$ and an increasing function $b : \R^+ \to \R^+$ such that, for every $x$, $y\in\X$,
    \begin{equation} \label{eq.mean1.1}
    \cM_0 (Tx -Ty) \leq L\, \cN_0 (x-y) 
    \end{equation}
    and
    \begin{equation} \label{eq.mean1.2}
    \cM_1 (Tx) \le a\, \cN_1(x) + b(\Norm[\X]{x}) .
    \end{equation}
    Then, for every pair $\BBFS = (\BFS_0 , \BFS_1)$ of Banach function spaces over the interval $(0,\infty )$ and for every $\tau\in (0,\infty )$, there exist a constant $\tilde{a}\geq 0$ and a finite valued function $\tilde{b} : \X \to \R^+$ such that
    \[
    \cM_\BBFS^{C,\tau} (Tx) \le \tilde{a}\, \cN_\BBFS^{C,\tau} (x) + \tilde{b} (x) \text{ for all } x \in \X.
    \]
    In particular
    \[
    T( \dom{\cN_\BBFS^{C,\tau}} ) \subseteq \dom{\cM_\BBFS^{C,\tau}}.
    \]
    If, in condition \eqref{eq.mean1.2}, $b=0$, then one can choose $\tilde{b} = 0$ in the conclusion and replace the finite interpolation functions by $\cM_\BFS^C$ and $\cN_\BFS^C$. In particular, if $b=0$ in condition \eqref{eq.mean1.2}, then
    \[
    T( \dom{\cN_\BFS^{C}} ) \subseteq \dom{\cM_\BFS^{C}}.
    \]
\end{theorem}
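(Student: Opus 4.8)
The plan is to build, for a given $x\in\X$, a competitor for the infimum defining $\cM_\BBFS^{C,\tau}(Tx)$ directly out of a near-optimal competitor $u$ for $\cN_\BBFS^{C,\tau}(x)$, by composition, i.e.\ by taking $w:=T\circ u$, and then to estimate the two terms occurring in the definition of $\cM_\BBFS^{C,\tau}(Tx)$ by means of \eqref{eq.mean1.1} and \eqref{eq.mean1.2} respectively. In contrast to the preceding lemmas, this argument requires no assumption on the Hardy operator; it only uses that every Banach function space contains $L^1\cap L^\infty(0,\infty)$, so that $\chi_{(0,\tau)}$ has finite norm in $\BFS_1$.

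\textbf{The estimate.} Fix $x\in\X$. If $x\notin\dom{\cN_\BBFS^{C,\tau}}$ there is nothing to prove, since the constant $\tilde a$ produced below will satisfy $\tilde a\ge a>0$, so that the right-hand side is $+\infty$; so assume $\cN_\BBFS^{C,\tau}(x)<\infty$ and choose $u\in C([0,\infty);\X)$ with
\[
\Norm[\BFS_0]{t\mapsto \frac{\cN_0(x-u(t))}{t}\,\chi_{(0,\tau)}(t)} + \Norm[\BFS_1]{t\mapsto \cN_1(u(t))\,\chi_{(0,\tau)}(t)} \;\le\; \cN_\BBFS^{C,\tau}(x)+1 .
\]
Since $u$ is continuous, $M:=\sup_{t\in[0,\tau]}\Norm[\X]{u(t)}$ is finite (it depends on $x$ through the chosen $u$). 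Put $w:=T\circ u$, which I claim is an admissible, i.e.\ continuous $\Y$-valued, competitor for $\cM_\BBFS^{C,\tau}(Tx)$. Granting this, \eqref{eq.mean1.1} with $y=u(t)$ gives $\cM_0(Tx-w(t))\le L\,\cN_0(x-u(t))$, so that by the ideal property of $\BFS_0$ the $\BFS_0$-norm of $t\mapsto \cM_0(Tx-w(t))\,\chi_{(0,\tau)}(t)/t$ is at most $L$ times the corresponding norm for $u$; likewise \eqref{eq.mean1.2} applied with $x$ replaced by $u(t)$, together with monotonicity of $b$ and $\Norm[\X]{u(t)}\le M$ on $(0,\tau)$, gives $\cM_1(w(t))\le a\,\cN_1(u(t))+b(M)$, so that the $\BFS_1$-norm of $t\mapsto \cM_1(w(t))\,\chi_{(0,\tau)}(t)$ is at most $a$ times the corresponding norm for $u$ plus $b(M)\,\Norm[\BFS_1]{\chi_{(0,\tau)}}$. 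Adding the two, using $w$ as a competitor and then the choice of $u$,
\[
\cM_\BBFS^{C,\tau}(Tx)\;\le\; \max\{L,a\}\,\bigl(\cN_\BBFS^{C,\tau}(x)+1\bigr)+b(M)\,\Norm[\BFS_1]{\chi_{(0,\tau)}} .
\]
This is the asserted inequality with $\tilde a:=\max\{L,a\}$ (hence $\tilde a\ge a>0$) and with the finite-valued function $\tilde b$ defined by $\tilde b(x):=\max\{L,a\}+b(M(x))\,\Norm[\BFS_1]{\chi_{(0,\tau)}}$ for $x\in\dom{\cN_\BBFS^{C,\tau}}$ and $\tilde b(x):=0$ otherwise; the inclusion $T(\dom{\cN_\BBFS^{C,\tau}})\subseteq\dom{\cM_\BBFS^{C,\tau}}$ follows at once.

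\textbf{The case $b\equiv 0$.} Then the term $b(M)$ disappears, so the boundedness of $\Norm[\X]{u}$ on $[0,\tau]$ is no longer needed: the same computation carried out with a competitor for the non-finite functional (that is, dropping the cut-off $\chi_{(0,\tau)}$ everywhere), with the additive $1$ replaced by an arbitrary $\varepsilon>0$ and $\varepsilon\to 0$, yields $\cM_\BBFS^{C}(Tx)\le\max\{L,a\}\,\cN_\BBFS^{C}(x)$, and hence $T(\dom{\cN_\BBFS^{C}})\subseteq\dom{\cM_\BBFS^{C}}$.

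\textbf{Main obstacle.} The one point that really needs care, and the reason the statement is phrased with the class $C$ of continuous functions on both the source and the target side, is the claim that $w=T\circ u$ is continuous: $T$ is only assumed measurable, so a priori $T\circ u$ is merely measurable. I would handle this either through relative compactness of the range $u([0,\tau])$ together with continuity of $T$ on that range, or through a Lusin-type replacement of $T\circ u$ by a genuinely continuous map off a set of arbitrarily small measure, absorbing the discrepancy into $\tilde b(x)$. In the applications of the theorem --- where $\cN_0=\Norm[\X]{\cdot}$ and $\cM_0=\Norm[\Y]{\cdot}$, so that \eqref{eq.mean1.1} just says that $T$ is globally $L$-Lipschitz and hence continuous --- this obstacle does not arise.
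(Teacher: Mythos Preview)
Your proof follows essentially the same route as the paper's: pick a near-optimal continuous competitor $u$ for $\cN_\BBFS^{C,\tau}(x)$, push it forward to $w=T\circ u$, and bound the two resulting terms via \eqref{eq.mean1.1} and \eqref{eq.mean1.2}. The paper sets $\tilde a=\max\{L,a\}$ and $\tilde b(x)=\Norm[\BFS_1]{t\mapsto b(\Norm[\X]{u(t)})\chi_{(0,\tau)}(t)}$, using continuity of $u$ and monotonicity of $b$ to place this in $L^1\cap L^\infty\subseteq\BFS_1$; your version differs only in bookkeeping (you bound $b(\Norm[\X]{u(t)})$ pointwise by $b(M)$ before taking the $\BFS_1$-norm, and you absorb the ``$+1$'' from the near-optimal choice of $u$ into $\tilde b$, which the paper's somewhat informal phrase ``taking the infimum'' handles less explicitly).

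The one point worth recording is that your flagged ``main obstacle''---continuity of $w=T\circ u$ when $T$ is merely measurable---is not a defect of your argument relative to the paper's: the paper simply uses $T\circ u$ as a continuous competitor without comment, so the issue is glossed over there as well. Your observation that in the intended applications (Section~4 onward, where $\cN_0=\Norm[\X]{\cdot}$ and $\cM_0=\Norm[\Y]{\cdot}$, so \eqref{eq.mean1.1} forces $T$ to be Lipschitz) the obstacle disappears is exactly right, and is presumably why the authors did not dwell on it.
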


\begin{remarks}
(a) If $(\X_0,\X_1)$ and $(\Y_0,\Y_1)$ are interpolation couples of Banach spaces, if $\X = \X_0 + \X_1$ and $\Y = \Y_0 + \Y_1$, if $\cN_0$ and $\cN_1$ are the norms on $\X_0$ and $\X_1$, and $\cM_0$ and $\cM_1$ are the norms on $\Y_0$ and $\Y_1$, respectively, then we are in the situation of classical interpolation theory. The conditions \eqref{eq.mean1.1} and \eqref{eq.mean1.2} (with $b=0$) for a {\em linear} operator $T : \X \to \Y$ mean that $T$ maps $\X_i$ boundedly into $\Y_i$ for both $i=0$ and $i=1$. Theorem \ref{lem.mean1} implies that $T$ maps the domain of $\cN_\BBFS^{C}$ into the domain of $\cM_\BBFS^{C}$, and therefore, by the very definition of interpolation spaces, these domains {\em are} interpolation spaces. Theorem \ref{lem.mean1} therefore recovers many classical interpolation theorems: for the {\em espaces de moyennes} in which $\BFS_0$ and $\BFS_1$ are two polynomially weighted $L^p$-spaces, see for example Lions \& Peetre \cite{LiPe64}; see also \cite[Theorem 3.1.2]{BeLo76}, \cite{BeSh88}, \cite{BrKr91} and many other instances for the special case $\BFS_0 = \BFS_1$.
\vspace{9pt}

(b) Despite the fact that the classical interpolation spaces are designed for the interpolation of linear operators, it is known that nonlinear operators may interpolate, too. For nonlinear operators which are Lipschitz continuous from $\X_i$ into $\Y_i$ for both $i=0$ and $i=1$ we refer to \cite{Or54,LoSh68,LoSh69}. Theorem \ref{lem.mean1}, however, has slightly different conditions and does not recover the interpolation theorem for Lipschitz operators: in Theorem \ref{lem.mean1}, 
the operator $T$ is required to be Lipschitz continuous from $\X_0$ into $\Y_0$, but it is only bounded from $\X_1$ into $\Y_1$ (if $b=0$). Actually, if $b=0$, then by \cite[Example 4.1.5 (b)]{BrKr91}, $T$ is a Gagliardo-Peetre operator (see \cite[Definition 4.1.1]{BrKr91} for the definition of Gagliardo-Peetre operators), and hence, by \cite[Proposition 4.1.3]{BrKr91}, $K(Tx,t,\cM ) \leq C\, K(x,t,\cN )$ for some constant $C\geq 0$ and for all $x\in \X_0 +\X_1$ and all $t>0$. This implies that for every Banach function space $\BFS$, $\cM_\BFS (Tx) \leq C\, \cN_\BFS (x)$, which is a version of Theorem \ref{lem.mean1} for the interpolation functions associated with the $K$-method or, by Lemma \ref{lem.k-mean}, for the interpolation functions $\cN_\BBFS^{L^0}$ and $\cM_\BBFS^{L^0}$. Theorem \ref{lem.mean1} therefore partially recovers some findings of \cite[Section 4.1]{BrKr91}. If $b\not= 0$, then in general we have to consider domains of the "finite" interpolation functions $\cN_\BBFS^{C,\tau}$ and $\cM_\BBFS^{C,\tau}$ instead of the classical real interpolation spaces. 
\vspace{9pt}

(c) Still, if $(\X_0,\X_1)$ and $(\Y_0,\Y_1)$ are interpolation couples of Banach spaces as above, and if $\cM_0$ and $\cM_1$ are the norms on $\Y_0$ and $\Y_1$, respectively, but $\cN_0 = \norm[\X_0]{\cdot}^{\alpha_0}$ and $\cN_1 = \norm[\X_1]{\cdot}^{\alpha_1}$ are powers of the norms in $\X_0$ and $\X_1$ for some exponents $\alpha_0$, $\alpha_1\in [0,1]$, the condition \eqref{eq.mean1.1} reads as
\[
 \norm[\Y_0]{Tx -Ty} \leq L\, \norm[\X_0]{x-y}^{\alpha_0} ,
\]
which is H\"older continuity of the operator $T$ from $\X_0$ into $\Y_0$. If one replaced the condition \eqref{eq.mean1.2} by the H\"older condition
\[
 \norm[\Y_1]{Tx -Ty} \leq L\, \norm[\X_1]{x-y}^{\alpha_1} ,
\]
then one would be in the interpolation setting of Peetre \cite{Pe70} and Tartar \cite{Tar72} (see also Coulhon \& Hauer \cite[Theorem 4.6]{CoHa18}). Such H\"older continuous operators interpolate to the so-called {\em nonlinear interpolation spaces} defined in these articles; see also Example \ref{ex.nonlinear}.  These nonlinear interpolation spaces actually coincide with the effective domains of our interpolation functions $\cN_\BFS$ (and the $L$-function defined in \cite{Pe70} coincides with our $K$-function associated with the powers of norms). Theorem \ref{lem.mean1} does not recover these interpolation theorems; we repeat that the condition \eqref{eq.mean1.2} is a kind of boundedness condition of $T$ with respect to the functions $\cN_1$ and $\cM_1$. 
\end{remarks}
\vspace{9pt}
\begin{proof}[Proof of Theorem \ref{lem.mean1}]
Fix $x\in \X$. If $\cN_{\BBFS}^{C,\tau}(x) =\infty$, then the claimed inequality is trivial, whatever the value of $\tilde{b} (x)$. We may put $\tilde{b}(x) = 0$ in this case. Assume now $\cN_{\BBFS}^{C,\tau}(x) < \infty$. For every continuous function $u : [0,\infty ) \to \X$ with $u(0)=x$ we get
    \begin{align*}
    \cM_\BBFS^{C,\tau} (Tx) &\le \Norm[\BFS_0]{ t\mapsto \frac{ \cM_0 (T x - T u (t)) }{t} \, \chi_{(0,\tau )} (t)} + \Norm[\BFS_1]{ t\mapsto\cM_1 (T u (t )) \, \chi_{(0,\tau )} (t)} \\
    & \le L \, \Norm[\BFS_0]{ t\mapsto \frac{ \cN_0 (x - u (t)) }{t} \, \chi_{(0,\tau )} (t)} + a \, \Norm[\BFS_1]{ t\mapsto \cN_1 (u (t )) \, \chi_{(0,\tau )} (t)} \\
    & \phantom{\leq\ } + \Norm[\BFS_1]{ t \mapsto b(\Norm[\X]{u(t)}) \, \chi_{(0,\tau )} (t)}.
    \end{align*}
Set $\tilde{a} = \max\{ L, a \}$. There exists $u\in C([0,\infty );\X)$ such that 
    \begin{equation*}
    \Norm[\BFS_0]{ t\mapsto \frac{ \cN_0 (x - u (t)) }{t} \, \chi_{(0,\tau )} (t)} +  \Norm[\BFS_1]{ t\mapsto \cN_1 (u (t )) \, \chi_{(0,\tau )} (t) } \leq \cN_{\BBFS}^{C,\tau}(x) + 1.
    \end{equation*}
    By continuity of $u$ and since $b$ is increasing, the function $t \mapsto  b(\Norm[\X]{u(t)})$ is locally bounded and therefore 
    \[
    t\mapsto b(\Norm[\X]{u(t)})\, \chi_{(0,\tau )} (t) \in L^1 \cap L^\infty (0,\infty ) \subseteq \BFS_1.
    \]
    Putting $\tilde{b}(x)= \Norm[\BFS_1]{t \mapsto  b(\Norm[\X]{u(t)}) \, \chi_{(0,\tau )} (t)}$ and taking the infimum leads to
		\[
		\cM_\BBFS^{C,\tau} (Tx) \le \tilde{a} \cN_\BBFS^{C,\tau} (x) + \tilde{b} ( x).
		\]
		Since $\tilde{b}$ takes only finite values, this implies $T( \dom{\cN_\BBFS^{C,\tau}} ) \subseteq \dom{\cM_\BBFS^{C,\tau}}$.
		
	If $b=0$, then one can take $\tilde{b} = 0$ and repeat the above calculations without the characteristic functions $\chi_{(0,\tau )}$ under the norms, and one obtains a corresponding inequality for the interpolation functions $\cN_\BBFS^C$ and $\cM_\BBFS^C$, and a corresponding inclusion relation for their effective domains.
	\end{proof}

	\subsection{The trace method}
	
	The following is a variant of the mean method and may be more relevant for the applications to Cauchy problems. Let $\cN = (\cN_0 ,\cN_1 )$ be an interpolation couple of functions on a Banach space $\X$, let $\BBFS = (\BFS_0, \BFS_1)$ be a couple of Banach function spaces over the interval $(0,\infty )$, and let $\tau\in (0,\infty )$. 
	We then define two new functions $\cN_{\BBFS}^{tr}$, $\cN_{\BBFS}^{tr,\tau} : \X\to [0,\infty ]$ by
	\begin{align*}
	\cN_\BBFS^{tr} (x) := \inf \{ \Norm[\BFS_0]{ t\mapsto \cN_0 (\dot w (t))}  + &  \Norm[\BFS_1]{ t\mapsto \cN_1 ( w(t))} \st \\
	& w\in W^{1,1}_{loc} ([0,\infty );\X) \text{ and } w(0) = x \} ,
	\end{align*}
	and
	\begin{align*}
	\cN_\BBFS^{tr,\tau} (x) := \inf \{ \Norm[\BFS_0]{ t\mapsto \cN_0 (\dot w (t)) \chi_{(0,\tau )} (t)}  + &  \Norm[\BFS_1]{ t\mapsto \cN_1 ( w(t)) \chi_{(0,\tau )} (t)} \st \\
	& w\in W^{1,1}_{loc} ([0,\infty );\X) \text{ and } w(0) = x \} .
	\end{align*}	
	We call $\cN_\BBFS^{tr}$ and $\cN_\BBFS^{tr,\tau}$ the {\em interpolation function (resp. finite interpolation function) with respect to the trace method} associated with the couple $\BBFS = (\BFS_0 ,\BFS_1 )$. In the following lemma, we use the interpolation functions $\cN_\BBFS^{W^{1,1}}$ and $\cN_\BBFS^{W^{1,1},\tau}$ with respect to the mean method and associated with $G = W^{1,1}_{loc} (0,\infty ;\X )$. 
	
	\begin{lemma}[Comparison of trace method and mean method] \label{lem.trace.mean}
    Let $\cN = (\cN_0,\cN_1 )$ be an interpolation couple of functions on a Banach space $\X$, and let $\BBFS = (\BFS_0 , \BFS_1)$ be a couple of Banach function spaces over the interval $(0,\infty )$. 
    If $\cN_0$ is a norm, and if the Hardy operator $P$ is bounded in $\BFS_0$, then, for every $x\in \X$,
    \begin{align*}
    \cN_\BBFS^{W^{1,1}} (x) & \leq \Norm[\cL (\BFS_0 )]{P} \, \cN^{tr}_\BBFS (x) \text{ and} \\
    \cN_\BBFS^{W^{1,1},\tau} (x) & \leq \Norm[\cL (\BFS_0 )]{P} \, \cN^{tr,\tau}_\BBFS (x) .
    \end{align*}
	\end{lemma}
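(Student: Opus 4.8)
The plan is to unwind the definitions and show that any competitor $w \in W^{1,1}_{loc}([0,\infty);\X)$ with $w(0)=x$ that is admissible in the trace-method infimum also gives, essentially, an admissible competitor in the mean-method infimum $\cN_\BBFS^{W^{1,1}}(x)$, at the cost of the operator norm $\|P\|_{\cL(\BFS_0)}$. The bridge between the two methods is the identity $w(t) - w(0) = \int_0^t \dot w(s)\,\ud s$, valid for $w \in W^{1,1}_{loc}$, which rewrites the mean-method integrand $\cN_0(x - w(t))/t = \cN_0(w(0)-w(t))/t$ as something controlled by the Hardy operator applied to $\cN_0(\dot w(\cdot))$.

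Concretely, I would fix $x \in \X$ and an arbitrary $w \in W^{1,1}_{loc}([0,\infty);\X)$ with $w(0) = x$, and use that $\cN_0$ is a norm (hence satisfies the triangle inequality and is positively homogeneous) together with a Jensen/convexity-type estimate for the average of an $\X$-valued Bochner integral:
\[
\cN_0\!\left(\frac{x - w(t)}{t}\right) = \cN_0\!\left(\frac{1}{t}\int_0^t \dot w(s)\,\ud s\right) \leq \frac{1}{t}\int_0^t \cN_0(\dot w(s))\,\ud s = P\bigl(\cN_0(\dot w(\cdot))\bigr)(t).
\]
(The inequality $\cN_0(\frac1t\int_0^t f) \le \frac1t\int_0^t \cN_0(f)$ for a norm and a Bochner-integrable $f$ is the standard estimate for the norm of an average; it is essentially the content of the proof of Lemma~\ref{lem.ac}.) Then the ideal property of $\BFS_0$ gives $\|t\mapsto \cN_0((x-w(t))/t)\|_{\BFS_0} \le \|P(\cN_0(\dot w(\cdot)))\|_{\BFS_0} \le \|P\|_{\cL(\BFS_0)}\,\|\cN_0(\dot w(\cdot))\|_{\BFS_0}$, while the $\BFS_1$-term $\|t\mapsto \cN_1(w(t))\|_{\BFS_1}$ is literally the same in both infima. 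Adding the two terms, and since $\|P\|_{\cL(\BFS_0)} \ge 1$ (as $P$ fixes, e.g., $\chi_{(0,1)}$ up to the obvious lower bound — or simply bound the $\BFS_1$ term trivially by $\|P\|_{\cL(\BFS_0)}$ times itself), I get
\[
\cN_\BBFS^{W^{1,1}}(x) \le \|t\mapsto \cN_0((x-w(t))/t)\|_{\BFS_0} + \|t\mapsto \cN_1(w(t))\|_{\BFS_1} \le \|P\|_{\cL(\BFS_0)}\bigl(\|\cN_0(\dot w(\cdot))\|_{\BFS_0} + \|\cN_1(w(\cdot))\|_{\BFS_1}\bigr).
\]
Taking the infimum over all such $w$ yields $\cN_\BBFS^{W^{1,1}}(x) \le \|P\|_{\cL(\BFS_0)}\,\cN_\BBFS^{tr}(x)$. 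For the finite version one inserts $\chi_{(0,\tau)}$ everywhere; the only subtlety is that $P(\cN_0(\dot w(\cdot))\chi_{(0,\tau)})(t)$ dominates $\frac1t\int_0^t \cN_0(\dot w(s))\chi_{(0,\tau)}(s)\,\ud s \ge \frac1t\int_0^t \cN_0(\dot w(s))\,\ud s \cdot \chi_{(0,\tau)}(t)$ pointwise, so the same chain of inequalities goes through after multiplying the integrand estimate by $\chi_{(0,\tau)}(t)$ before taking norms.

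The main obstacle — really the only place needing care — is justifying the averaging inequality $\cN_0(\frac1t\int_0^t \dot w\,\ud s) \le \frac1t\int_0^t \cN_0(\dot w)\,\ud s$ when $\cN_0$ is only a $[0,\infty]$-valued norm rather than a genuine finite norm, and making sure the Bochner integral is well defined: this is fine because $w \in W^{1,1}_{loc}$ guarantees $\dot w \in L^1_{loc}([0,\infty);\X)$, so $\int_0^t \dot w$ exists as a Bochner integral and equals $w(t)-w(0)$, and the convexity estimate for a $[0,\infty]$-valued norm follows by approximating the integral with Riemann-type sums (or by the usual Hahn–Banach duality argument applied on the effective domain, extended by $\infty$). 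If $\cN_0(\dot w(\cdot))$ fails to lie in $\BFS_0$ the corresponding trace-method competitor contributes $+\infty$ and the inequality is vacuous, so we may assume $\cN_0(\dot w(\cdot)) \in \BFS_0$; then everything above is finite and legitimate. I would also remark that the reverse inequality (mean method controls trace method, up to a constant, at least on effective domains) is not claimed here and would require a different, reconstruction-type argument.
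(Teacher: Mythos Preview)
Your proof is correct and follows essentially the same approach as the paper: fix a competitor $w$, use $w(t)-w(0)=\int_0^t\dot w$, apply the norm inequality $\cN_0(\frac1t\int_0^t\dot w)\le P(\cN_0(\dot w))(t)$, invoke boundedness of $P$ on $\BFS_0$, note $\|P\|_{\cL(\BFS_0)}\ge 1$ to absorb the untouched $\BFS_1$-term, and take the infimum. Your additional remarks on the $[0,\infty]$-valued norm and on handling $\chi_{(0,\tau)}$ are sound and go slightly beyond what the paper spells out.
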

	
	\begin{proof} 
    Let $x\in \X$ and $w\in W^{1,1}_{loc} (0,\infty ;\X )$ be such that $w(0) = x$. Then, for every $t\in (0,\infty )$, 
    \[
     \frac{x-w(t)}{t} = - \frac{1}{t} \int_0^t \dot w(s) \,\ud s ,
    \]
    and therefore, since $\cN_0$ is a norm,
    \[
     \frac{\cN_0 (x-w(t))}{t} \leq \frac{1}{t} \int_0^t \cN_0 (\dot w(s)) \,\ud s .
    \]
    Hence, 
    \[
    \Norm[\BFS_0]{ t\mapsto \frac{\cN_0 (x-w(t))}{t}  } \leq \Norm[\BFS_0]{ P \cN_0 (\dot w (\cdot )) } \leq \Norm[\cL (\BFS_0 )]{P} \, \Norm[\BFS_0]{ \cN_0 (\dot w (\cdot )) } , 
    \]
    and, since $\Norm[\cL (\BFS_0 )]{P} \geq 1$, 
    \[
    \Norm[\BFS_0]{ t\mapsto \frac{\cN_0 (x-w(t))}{t}  } + \Norm[\BFS_1]{ \cN_1 (w(\cdot ))  } \leq \Norm[\cL (\BFS_0 )]{P} \, (  \Norm[\BFS_0]{ \cN_0 (\dot w (\cdot )) } +  \Norm[\BFS_1]{ \cN_1 (w(\cdot ))  } ) .
    \]
    Taking the infimum over all $w\in {W}^{1,1}_{loc} (0,\infty ;\X )$  implies the first inequality. The second inequality is proved in the same way, but inserting the characteristic function $\chi_{(0,\tau )}$ before taking the norms in $\BFS_0$ and $\BFS_1$.
	\end{proof}

	\section{Interpolation associated with $m$-accretive operators}
	
	Let $\X$ be a Banach space. A relation $A\subseteq \X\times \X$ (we use in the following the word {\em operator} instead of {\em relation}) is {\em accretive of type $\omega\in\R$} if for every $(x,f)$, $(\hat{x},\hat{f} )\in A$ and for every $\lambda >0$ with $\lambda \omega < 1$,
	\[
	\norm[\X]{x-\hat{x} + \lambda (f-\hat{f})} \geq  \,(1-\lambda \omega) \norm[\X]{x-\hat{x}} . 
	\]
	It follows from this very definition that for every $\lambda >0$ with $\lambda \omega < 1$, the operator $I+\lambda A$ is injective. We say that the operator $A$ is {\em $m$-accretive of type $\omega$} if it is accretive of type $\omega$ and if $I+\lambda A$ is surjective onto $\X$ (hence, bijective from $\dom{A}$ to $\X$) for every $\lambda >0$ with $\lambda \omega < 1$, and in this case, we write $J_\lambda^A := (I+\lambda A)^{-1}$ for the {\em resolvent} of $A$ (if $A$ is clear from the context, we simply write $J_\lambda$). The resolvent $J_\lambda$ is single-valued and Lipschitz continuous on $\X$ with Lipschitz constant $\frac{1}{1-\lambda\omega}$. By the Crandall-Liggett theorem (\cite[Theorem 4.3]{Bar10}, \cite[Chapter 3]{BeCrPa99}), if $A$ is $m$-accretive of type $\omega$, then the Cauchy problem 
	\begin{equation}\tag{ACP} \label{acp}
\begin{aligned}
\dot{u} + A u &\ni 0 , \\
u(0) &= x \nonumber
\end{aligned}
\end{equation}
admits, for every $x \in \overline{\dom{A}}$, a unique mild solution $u \in C(\mathbb{R}_{+}, \X)$. Mild solutions arise for example as locally uniform limits of solutions of associated implicit Euler schemes, and they are in general only continuous. Setting, for each $ x \in \overline{\dom{A}} $ and for each $t\geq 0$, $ S(t) x := u(t)$, where $u$ is the solution of the above Cauchy problem for the initial value $x$, we obtain a (nonlinear) strongly continuous semigroup  $(S(t))_{t \geq 0}$ of Lipschitz continuous operators on $\overline{\dom{A}}.$ We say that $A$  generates (or, in a more common terminology, $-A$ generates) a strongly continuous semigroup $S$ on $\overline{\dom{A}}$. 
	
In this section, we interpolate between the norm on $\X$ and the set norm of $A$, that is, in terms of the previous section, we consider the interpolation couple $\cN = (\cN_0 ,\cN_1 )$ where $\cN_0 = \norm[\X]{\cdot}$ is the norm on $\X$ and the function $\cN_1 (x) = |Ax|$, where  $ | \cdot | $ represents the set norm in $\X$,  whose effective domain is just the domain of the operator $A$.
Note that with these choices, we have for $x \in \X$, and $ \BFS$ a Banach function space over the interval $(0,\infty)$ and for every $\tau>0$, 
\begin{align*}
K(x,t) &=  \inf \{  \norm[\X]{x - v }+ t | A v | \st v \in \dom{A} \} \quad (t\in (0,\infty )) ,\\
\dom{\cN_\BFS}  & = \left\{ x \in \X \st t \mapsto \frac{K(x,t)}{t} \in \BFS \right\} , \\
\dom{\cN_\BFS^{\tau}}  & = \left\{ x \in \X \st t \mapsto \frac{K(x,t)}{t}\chi_{(0,\tau)}(t)  \in \BFS \right\}.
\end{align*}
We estimate the interpolation function $\cN_\BFS^{\tau}$ (and in some statements also $\cN_\BFS$) from above and below by functions which involve both the resolvent of $A$ and the semigroup generated by $-A$. Recall for that aim that every mild solution $u$ of \eqref{acp} is in turn an {\em integral solution}, that is, the following holds: for every $s$, $t \in [0,\infty)$ with $s < t$, and for all $ (\hat{x}, \hat{f}) \in A$, 
\begin{align}\label{eq 4.0.1}
\norm[\X]{ u(t) - \hat{x}}&\leq \norm[\X]{ u(s) - \hat{x}} +
 \int_{s}^{t} \left [ u(\tau) - \hat{x},  - \hat{f} \right ]\,\ud s \tau + \omega \int_{s}^{t} \norm[\X]{ u(\tau) - \hat{x}} \,\ud \tau ,
\end{align}
 where for any $x$, $y \in \X,$ 
\[
 [x,y] := \lim_{\lambda \rightarrow 0^{+}}\frac{1}{\lambda}(\norm[\X]{ x+\lambda y })- \norm[\X]{ x} ) = \inf_{\lambda >0}\frac{1}{\lambda}(\norm[\X]{ x+\lambda y}  )- \norm[\X]{ x} )
 \]
 is the {\em Kato bracket}.  Note that it follows from the definition of an integral solution and Gronwall's Lemma that for every $x\in \dom{A}$ the solution of the abstract Cauchy problem \eqref{acp} is locally Lipschitz continuous. More precisely, for every $x\in \dom{A}$ and for every $s$, $t \geq 0$ with $s\leq t$ we have 
 \begin{equation}{\label{eq 4.0.2 }}
   \Norm[\X]{S(t)x - S(s)x} \leq e^{\omega t } \,|t-s| \,|Ax|.  
 \end{equation}
 
\begin{theorem} \label{th4.1}
Let $A\subseteq \X \times \X$ be an $m$-accretive operator of type $\omega \geq 0$ on a Banach space $\X$, and let $S$ be the semigroup generated by $-A$. Then, for every $x \in \overline{\dom A}$ and every $t>0$ with $t\omega <1$,
\begin{aufzi}
\item\label{eq 4.1.1}{ \begin{equation*}
    \frac12 \, \frac{K(x, t)}{t}  \leq  \frac{\norm[\X]{x -J_tx }}{t} \leq \frac{2-t\omega}{1-t\omega} \frac{K(x,t)}{t}, 
    \end{equation*}}
\item \label{eq 4.1.2}{\begin{equation*}
(1-t\omega ) \, \frac{\ud}{\ud t} \Var_{J_{\cdot} x} (t )  \leq  \frac{\norm[\X]{x -J_tx }}{t} \leq  \frac{1}{t} \int_0^t \frac{\ud}{\ud s} \Var_{J_{\cdot} x} (s) \;\ud s,
\end{equation*}} 
\item\label{eq 4.1.3} {\begin{equation*}
\phantom{\leq } \frac{\norm[\X]{x - S(t)x}}{t} \leq (1+e^{\omega t})  \frac{K(x, t )}{t} , \end{equation*}}
\item\label{eq 4.1.4}{\begin{equation*} 
\norm[\X]{x-J_{t}x } \leq  \frac{3 - t \omega + e^{\omega t}}{1-t\omega} \, \frac{1}{t} \int_0^{t} \norm[\X]{ x-S(s)x} \,\ud s .
\end{equation*}}
\end{aufzi}
\end{theorem}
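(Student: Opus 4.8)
The plan is to prove the four two‑sided estimates by direct computation. The ingredients are elementary: the algebraic relation $\tfrac1t(x-J_tx)\in A(J_tx)$, so that $|AJ_tx|\le\tfrac1t\norm[\X]{x-J_tx}$; the $\tfrac1{1-t\omega}$‑Lipschitz continuity of the resolvent $J_t$ and the $e^{\omega t}$‑Lipschitz continuity of $S(t)$ on $\overline{\dom A}$; the resolvent identity $J_\lambda x=J_\mu\!\bigl(\tfrac\mu\lambda x+(1-\tfrac\mu\lambda)J_\lambda x\bigr)$; the estimate $\norm[\X]{S(t)v-S(s)v}\le e^{\omega t}\,|t-s|\,|Av|$ for $v\in\dom A$; the integral solution inequality \eqref{eq 4.0.1}; Gronwall's lemma; and the facts on variation functions from Section~\ref{sec.definition}. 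For part~\ref{eq 4.1.1}, the left inequality comes from testing the infimum defining $K(x,t)$ with $v=J_tx\in\dom A$: $K(x,t)\le\norm[\X]{x-J_tx}+t|AJ_tx|\le 2\norm[\X]{x-J_tx}$. For the right inequality, fix $v\in\dom A$ and $f\in Av$; since $J_t(v+tf)=v$, the Lipschitz bound gives $\norm[\X]{J_tx-v}\le\tfrac1{1-t\omega}\bigl(\norm[\X]{x-v}+t\norm[\X]{f}\bigr)$, and taking the infimum over $f\in Av$, then the triangle inequality, then the infimum over $v\in\dom A$, produces $\norm[\X]{x-J_tx}\le\tfrac{2-t\omega}{1-t\omega}K(x,t)$. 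Both estimates are then divided by $t$.

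For part~\ref{eq 4.1.2}, since $x\in\overline{\dom A}$ we have $J_sx\to x$ as $s\to 0^+$, and the resolvent identity together with the Lipschitz bound for $J_\mu$ gives, for $0<\mu<\lambda$, $\norm[\X]{J_\lambda x-J_\mu x}\le\tfrac{\lambda-\mu}{\lambda(1-\mu\omega)}\norm[\X]{x-J_\lambda x}$. In particular $s\mapsto J_sx$ is locally Lipschitz on $(0,\infty)$, so $\Var_{J_\cdot x}$ is locally absolutely continuous there and the inequalities in~\ref{eq 4.1.2} make sense (the left one for a.e.\ $t$). The right inequality follows from $\int_0^t\tfrac{\ud}{\ud s}\Var_{J_\cdot x}(s)\,\ud s=\Var_{J_\cdot x}(t)-\Var_{J_\cdot x}(0^+)\ge\norm[\X]{J_tx-x}$, the last step because splitting a partition of $[0,t]$ at a small point gives $\Var_{J_\cdot x}(t)\ge\Var_{J_\cdot x}(0^+)+\norm[\X]{J_tx-x}$. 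For the left inequality, applying the displayed estimate to partitions of $[t,t+h]$ yields $\Var_{J_\cdot x}(t+h)-\Var_{J_\cdot x}(t)\le\tfrac{h}{t(1-(t+h)\omega)}\sup_{s\in[t,t+h]}\norm[\X]{x-J_sx}$; dividing by $h$ and letting $h\to 0^+$ gives $\tfrac{\ud}{\ud s}\Var_{J_\cdot x}(t)\le\tfrac1{t(1-t\omega)}\norm[\X]{x-J_tx}$. Part~\ref{eq 4.1.3} is a triangle inequality: for $v\in\dom A$, $\norm[\X]{x-S(t)x}\le\norm[\X]{x-v}+\norm[\X]{S(t)v-v}+\norm[\X]{S(t)v-S(t)x}\le\norm[\X]{x-v}+e^{\omega t}t|Av|+e^{\omega t}\norm[\X]{x-v}\le(1+e^{\omega t})\bigl(\norm[\X]{x-v}+t|Av|\bigr)$; take the infimum over $v\in\dom A$ and divide by $t$.

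Part~\ref{eq 4.1.4} is the substantial one. I would apply the integral solution inequality \eqref{eq 4.0.1} to the mild solution $u(\cdot)=S(\cdot)x$ and to the element $(\hat x,\hat f)=\bigl(J_tx,\ \tfrac1t(x-J_tx)\bigr)\in A$: for every $\sigma\ge 0$,
\[
\norm[\X]{S(\sigma)x-J_tx}\le\norm[\X]{x-J_tx}+\int_0^\sigma\bigl[\,S(\tau)x-J_tx,\ \tfrac1t(J_tx-x)\,\bigr]\,\ud\tau+\omega\int_0^\sigma\norm[\X]{S(\tau)x-J_tx}\,\ud\tau .
\]
The decisive estimate is that, choosing the step $\lambda=t$ in the infimum defining the Kato bracket,
\[
\bigl[\,S(\tau)x-J_tx,\ \tfrac1t(J_tx-x)\,\bigr]\le\tfrac1t\bigl(\norm[\X]{(S(\tau)x-J_tx)+(J_tx-x)}-\norm[\X]{S(\tau)x-J_tx}\bigr)=\tfrac1t\bigl(\norm[\X]{x-S(\tau)x}-\norm[\X]{S(\tau)x-J_tx}\bigr).
\]
Writing $\psi(\sigma)=\norm[\X]{S(\sigma)x-J_tx}$, $\alpha=\norm[\X]{x-J_tx}$ and $\beta=\int_0^t\norm[\X]{x-S(s)x}\,\ud s$, these two displays combine into $\psi(\sigma)+\bigl(\tfrac1t-\omega\bigr)\int_0^\sigma\psi\le\alpha+\tfrac\beta t$ for $0\le\sigma\le t$. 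As $\tfrac1t-\omega>0$, Gronwall's lemma applied to $\sigma\mapsto\int_0^\sigma\psi$ bounds $\int_0^t\psi$ by a constant times $\alpha$ plus a constant times $\beta$; separately, integrating the triangle inequality $\alpha\le\norm[\X]{x-S(\sigma)x}+\psi(\sigma)$ over $\sigma\in(0,t)$ gives $t\alpha\le\beta+\int_0^t\psi$. Feeding the Gronwall bound into this and solving the resulting linear inequality for $\alpha$ gives $\alpha\le C\,\beta/t$, which is the claim, with $C$ of the stated form $\tfrac{3-t\omega+e^{\omega t}}{1-t\omega}$.

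The main obstacle is precisely this last bookkeeping in part~\ref{eq 4.1.4}: the structure of the argument — the choice $(\hat x,\hat f)=(J_tx,\tfrac1t(x-J_tx))$, the bracket estimate obtained by using the step $\lambda=t$, and the coupling of the Gronwall bound for $\int_0^t\norm[\X]{S(\tau)x-J_tx}\,\ud\tau$ with the integrated triangle inequality — is transparent, but extracting exactly the stated constant (rather than merely some finite constant that blows up like $(1-t\omega)^{-1}$ as $t\omega\to 1$) requires keeping careful track of where the factor $e^{\omega t}$ enters. A lesser technical point, in part~\ref{eq 4.1.2}, is the justification that $\Var_{J_\cdot x}$ is absolutely continuous on compact subintervals of $(0,\infty)$ and that its behaviour near $0$ does not spoil the inequality $\int_0^t\tfrac{\ud}{\ud s}\Var_{J_\cdot x}(s)\,\ud s\ge\norm[\X]{x-J_tx}$.
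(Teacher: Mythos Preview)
Your arguments for \ref{eq 4.1.1}, \ref{eq 4.1.2} and \ref{eq 4.1.3} coincide with the paper's up to cosmetic differences (you use the resolvent identity in the form $J_\lambda x=J_\mu\bigl(\tfrac\mu\lambda x+(1-\tfrac\mu\lambda)J_\lambda x\bigr)$ while the paper swaps the roles of $\lambda$ and $\mu$, but both lead to the same pointwise bounds).

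For \ref{eq 4.1.4}, your setup --- the integral solution inequality with $(\hat x,\hat f)=(J_tx,A_tx)$ and the bracket estimate obtained with step $\lambda=t$ --- is exactly the paper's. The endgame, however, is different, and this is why you cannot recover the stated constant. The paper does \emph{not} use Gronwall. Instead it evaluates your inequality
\[
\psi(\sigma)+\Bigl(\tfrac1t-\omega\Bigr)\int_0^\sigma\psi\ \le\ \alpha+\tfrac1t\int_0^\sigma\norm[\X]{x-S(\tau)x}\,\ud\tau
\]
only at $\sigma=t$, and then bounds \emph{both} terms on the left from below via the triangle inequality: $\psi(t)\ge\alpha-\norm[\X]{x-S(t)x}$ and $\int_0^t\psi\ge t\alpha-\beta$. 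This gives directly
\[
(1-t\omega)\,\alpha\ \le\ \norm[\X]{x-S(t)x}+\tfrac{2-t\omega}{t}\,\beta .
\]
To close, the paper proves the separate estimate $\norm[\X]{x-S(t)x}\le\tfrac{1+e^{\omega t}}t\,\beta$: write $\norm[\X]{x-S(t)x}\le\tfrac1t\int_0^t\norm[\X]{x-S(s)x}\,\ud s+\tfrac1t\int_0^t\norm[\X]{S(t)x-S(s)x}\,\ud s$ and bound the second integral using $\norm[\X]{S(t)x-S(s)x}\le e^{\omega s}\norm[\X]{S(t-s)x-x}$. Substituting this into the displayed inequality yields exactly $C=\tfrac{3-t\omega+e^{\omega t}}{1-t\omega}$.

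Your Gronwall route does produce a valid bound $\alpha\le C\beta/t$, but with a different constant (for $\omega=0$ one gets $C=2e-1\approx 4.44$ rather than $C=4$), so your instinct that the bookkeeping is the obstacle is correct: the paper sidesteps it entirely by introducing $\norm[\X]{x-S(t)x}$ as an intermediate quantity and bounding it separately.
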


\begin{proof} 
Let $x \in   \overline{\dom{A}}$ and $t>0$ such that $t\omega <1$. Let $A_t = \frac{I - J_t}{t}$ be the Yosida approximation of $A$. Note that, for every $x\in\X$,  $J_t x \in \dom{A}$ and that $A_tx \in AJ_t x$. Therefore, by definition of the $K$-function and by the definition of the set norm,   
\begin{align*}
K(x, t) & \leq \norm[\X]{x - J_tx}  + t \, |AJ_tx| \\
& \leq \norm[\X]{x - J_tx}  + t\, \norm[\X]{A_tx} \\
& = 2 \norm[\X]{x - J_tx} .
\end{align*}
This proves the first inequality in \ref{eq 4.1.1}. 

The second inequality in \ref{eq 4.1.1} and the inequality \ref{eq 4.1.3} are proved in a similar way. First, by the Lipschitz continuity of $J_t$ on $\X$, for every $v\in \dom A$ we have
\begin{align*} 
\norm[\X]{x -J_t x }  & \leq \norm[\X]{ (x -J_t x) - (v -J_t v) } + \norm[\X]{ v -J_t v } \nonumber \\
& \leq  \norm[\X]{ x-v } + \norm[\X]{ J_t x-J_t v} + t\, \norm[\X]{ A_tv} \nonumber \\
& \leq \norm[\X]{ x-v } + \frac{1}{1-t \omega} \, \norm[\X]{ x-v }  + t\, \norm[\X]{A_tv}  . 
\end{align*}
Second, for the Yosida approximation we have on using \cite[Prop. 3.2 (d)]{Bar10}, the estimate
\[
 \norm[\X]{A_t v} \leq  \frac{1}{1-t\omega} \, \vert Av\vert .
\]
Hence, for every $v\in\dom{A}$,
\begin{equation}\label{eq 4.1.5}
\norm[\X]{ x -J_tx } \leq \frac{2-t\omega}{1-t\omega}  \norm[\X]{ x-v} + \frac{t}{1-t\omega} \, \vert Av \vert.
\end{equation}
Taking the infimum over $v\in\dom{A}$ on the right hand side of this inequality and noting that $1\leq 2-t\omega$ for every $t>0$ with $t \omega <1$, we find
\[
\norm[\X]{ x - J_tx }  \leq \frac{2-t\omega}{1-t\omega} \, K(x,t) .
\]
This proves the second inequality in \ref{eq 4.1.1}. 

Next, by the Lipschitz continuity of $S(t)$ on $\overline{\dom{A}}$ (with Lipschitz constant $e^{\omega t}$), for every $v\in \dom{A}$ we have
\begin{align*} 
\norm[\X]{ x -S(t) x}   & \leq \norm[\X]{ (x -S(t) x) - (v -S(t) v) } + \norm[\X]{ v -S(t) v } \nonumber \\
& = \norm[\X]{ x-v } + \norm[\X]{ S(t) x- S(t) v}  + t\, \norm[\X]{ \frac{v - S(t)v}{t} } \nonumber \\
& \leq (1 + e^{\omega t} )\, \norm[\X]{ x-v }  + t\, \norm[\X]{ \frac{v - S(t)v}{t} } . \nonumber
\end{align*}
Now, as remarked in \eqref{eq 4.0.2 }, for every $v\in\dom{A}$ the trajectory $t\mapsto S(t)v$ is Lipschitz continuous in time. In particular (see \eqref{eq 4.0.2 }),
\[
 \norm[\X]{ \frac{v - S(t)v}{t} } \leq e^{\omega t} \, |Av | \text{ for every } t>0 . 
\]
Hence, for every $v\in\dom{A}$,
\[
\norm[\X]{ x -S(t)x } \leq (1 + e^{\omega t} ) \,  \norm[\X]{ x-v } + t\, e^{\omega t}\, \vert Av \vert .
\]
Taking the infimum over $v\in\dom{A}$ on the right hand side of this inequality yields
\[
\norm[\X]{ x - S(t) x} \leq (1 +e^{\omega t} ) \, K(x,t) ,
\]
which is the inequality \ref{eq 4.1.3}.

Next, by the resolvent identity, for every $\mu$, $\lambda >0$ with $\mu\tau$, $\lambda\tau <1$, 
\[
J_\lambda x - J_\mu x = J_\lambda x - J_\lambda ( \frac{\lambda}{\mu} x + (1-\frac{\lambda}{\mu}) J_\mu x) . 
\]
The Lipschitz continuity of the resolvent implies 
\begin{equation} \label{eq.res.lip}
\begin{split}
\norm[\X]{ J_\lambda x - J_\mu x } & \leq \frac{1}{1-\lambda \omega} \, \norm[\X]{ x -  ( \frac{\lambda}{\mu} x + (1-\frac{\lambda}{\mu}) J_\mu x ) } \\
& =  \frac{ |\lambda - \mu |}{1-\lambda \omega} \, \norm[\X]{ \frac{x-J_\mu x}{\mu} } 
\end{split}
\end{equation}
From here it follows that the variation of the function $ \lambda \mapsto J_{\lambda} x $ is Lipschitz continuous and being  scalar-valued, is in turn differentiable a.e., so that,   
\[
\frac{\ud}{\ud\lambda} \Var_{J_{\cdot} x} (\lambda ) \leq \, \frac{1}{1-\lambda \omega}\,\norm[\X]{ \frac{x-J_\lambda x}{\lambda} } ,
\]
which is the first inequality in \ref{eq 4.1.2}. The second inequality uses an argument which is similar to the argument from the proof of Lemma \ref{lem.ac}:
\begin{align*}
\norm[\X]{ \frac{x-J_\lambda x}{\lambda} } & \leq \frac{1}{\lambda} \Var_{J_{\cdot} x} (\lambda ) = \frac{1}{\lambda} \int_0^\lambda \frac{\ud}{\ud\mu} \Var_{J_{\cdot} x} (\mu ) \; \ud\mu . 
\end{align*}

Let us turn to the inequality \ref{eq 4.1.4}. As mild solutions are also integral solutions, for every $x\in\overline{\dom{A}}$, every $(v,f) \in A$, and every $t\geq 0$, we have using   \eqref{eq 4.0.1} above,
\begin{equation}\label{eq 4.1.6} 
\norm[\X]{ v- S(t)x } - \norm[\X]{ v-x } \leq \int_0^t [v - S(s)x, f]\,\ud s + \omega \int_0^t \norm[\X]{ v - S(s)x}  \,\ud s ;
\end{equation}
Moreover, for every $\lambda > 0$, 
\begin{equation}\label{eq 4.1.7} 
[v - S(s)x, f] \leq \frac{1}{\lambda}\Big( \norm[\X]{ v - S(s)x + \lambda f }  - \norm[\X] {v - S(s)x} \Big). 
\end{equation} 
Choosing  $v=J_{\lambda}x$ and $f = A_\lambda x$ for some $\lambda >0$ with $\lambda \omega<1$ (here, $A_\lambda$ is again the Yosida approximation of $A$) and noting that $(v,f)\in A$, we have
\begin{align*}
[J_{\lambda}x - S(s)x , A_{\lambda}x ] & \leq \frac{1}{\lambda} \left (\norm[\X]{ J_{\lambda}x - S(s)x  + \lambda A_\lambda x}   - \norm[\X]{ J_{\lambda}x - S(s)x }\right) \nonumber \\
& = \frac{1}{\lambda} \left (\norm[\X]{ x - S(s)x}  - \norm[\X] { J_{\lambda}x - S(s)x }\right) . 
\end{align*}
Inserting this inequality into \eqref{eq 4.1.6} yields
\begin{align*}
\lefteqn{\norm[\X] { J_{\lambda}x - S(t)x}  - \norm[\X]{ x - J_\lambda x}} \\ 
& \leq \int_0^t\frac{1}{\lambda} \Big(\norm[\X]{ x - S(s)x}   - \norm[\X]{  J_{\lambda}x - S(s)x }\Big)\,\ud s + \omega \int_0^t \norm[\X]{ S(s)x - J_{\lambda}x}\,\ud s .
\end{align*}
Hence,
\begin{equation} \label{eq 4.1.8}
\begin{split}
\lefteqn{\norm[\X] { J_{\lambda}x - S(t)x}  - \norm[\X]{ x - J_\lambda x}} \\ 
& \leq \frac{1}{\lambda} \int_0^t \norm[\X]{x - S(s)x} \,\ud s - \frac{1-\lambda\omega}{\lambda} \int_0^t \norm[\X] { J_{\lambda}x - S(s)x }\,\ud s .
\end{split}
\end{equation}
Since 
\[
\norm[\X] {x - J_{\lambda}x} \leq \norm[\X]{ x - S(t)x } + \norm[\X]  {J_{\lambda}x - S(t)x} ,
\]
we have from \eqref{eq 4.1.8}  
\begin{align*}
\lefteqn{- \norm[\X]{ x - S(t)x } } \\
& \leq \frac{1}{\lambda} \int_0^t \norm[\X]{ x - S(s)x} \,\ud s  + \frac{1-\lambda\omega}{\lambda} \, \int_0^t \norm[\X]{x - S(s)x}    \,\ud s \\
& \phantom{\leq\ } - \frac{1-\lambda\omega}{\lambda} \int_0^t\norm[\X]{ x -J_{\lambda}x }\,\ud s \\ 
 & \leq  \frac{2 -\lambda \omega}{\lambda} \int_0^t \norm[\X]{ x - S(s)x } \,\ud s - \frac{t(1-\lambda\omega)}{\lambda} \norm[\X] {x - J_{\lambda}x } .
\end{align*}
Hence, 
\begin{equation}\label{eq 4.1.9} 
\norm[\X] {x - J_{\lambda}x} \leq \frac{\lambda}{t(1-\lambda\omega)} \norm[\X]{ x-S(t)x } + \frac{2-\lambda\omega}{t(1-\lambda\omega)}\int_0^t\norm[\X]{ x - S(s)x  } \,\ud s
\end{equation}  
Observe that for $t > 0$, 
\begin{align*}
\norm[\X]{ S(t)x - \frac{1}{t}\int_0^t S(s)x\,\ud s }  & \leq  \frac{1}{t} \int_0^t \norm[\X]{ S(t)x - S(s)x } \,\ud s\\
& \leq \frac{1}{t} \int_0^t e^{\omega s} \norm[\X] {S(t-s)x- x}   \,\ud s\\
& \leq \frac{e^{\omega t}}{t} \int_0^t  \norm[\X] {S(t-s)x- x}   \,\ud s\\
& = \frac{e^{\omega t}}{t} \int_0^t \norm[\X]{ S(s)x - x}   \,\ud s.
\end{align*}
Thus,
\begin{align*}
\norm[\X] {x - S(t)x } & \leq \norm[\X]{ x - \frac{1}{t}\int_0^t S(s)x\,\ud s } + \frac{e^{\omega t}}{t} \int_0^t \norm[\X]{ S(s)x - x  } \,\ud s\\
& \leq \frac{1+e^{\omega t}}{t} \int_0^t \norm[\X]{ S(s)x - x} \,\ud s ,
\end{align*}
and it follows from \eqref{eq 4.1.9} that 
\begin{align}
\lefteqn{\norm[\X]{ x - J_{\lambda}x }} \nonumber \\
& \leq \frac{\lambda}{1-\lambda\omega} \, \frac{1+ e^{\omega t}}{t^2} \int_0^t \norm[\X]{ S(s)x - x}  \,\ud s
+  \frac{2-\lambda\omega}{1-\lambda\omega} \, \frac{1}{t} \, \int_0^t \norm[\X] {S(s)x - x}  \,\ud s \nonumber  \\
& = \frac{\frac{\lambda(1+e^{\omega t})}{t}  +2-\lambda\omega}{1-\lambda\omega}\, \frac{1}{t} \, \int_0^t \norm[\X] {S(s)x - x}   \,\ud s. \label{thisone}
\end{align}
The last inequality in the statement follows from \eqref{thisone} by putting $\lambda = t$, assuming that $t\omega <1$. 
\end{proof}

From the pointwise estimates between the $K$-function and functions depending on the resolvent or the semigroup (Theorem \ref{th4.1}) it follows that the interpolation function $\cN_\BFS^\tau$ (and the interpolation function $\cN_\BFS$, if the underlying operator is surjective and $m$-accretive of type $0$) and some functions involving the resolvent of the operator $A$ and the semigroup generated by $-A$ are mutually equivalent. Here we say that two functions $f$, $g:\X\to [0,\infty ]$ are {\em equivalent} if there exist constants $c_1$, $c_2>0$ such that $c_1f\leq g \leq c_2f$. The constants $c_1$ and $c_2$ appearing in the following equivalences depend only on the type of the operator $A$, the interval $(0,\tau )$ and possibly a bound for the Hardy operator on the space $\BFS$, but not otherwise on the operator $A$.

In the following statement, there  appear functions depending on the resolvent of an operator, and these functions might not be defined on the entire interval $(0,\infty )$. However, we consider the products of such functions with the characteristic function $\chi_{(0,\tau )}$ interpreting the product as $0$ outside the interval $(0,\tau )$. 

\begin{corollary}\label{th4.2}
Let $A\subseteq \X \times \X$ be an $m$-accretive operator of type $\omega\geq 0$ on a Banach space $\X$, and let $S$ be the semigroup generated by $-A$. Let $\cN = (\cN_0 ,\cN_1 )$ be the associated interpolation couple, where $\cN_0 = \norm[\X]{\cdot}$ is the norm on $\X$ and $\cN_1 = |A\cdot |$ is the set norm of $A$. Let $\BFS$ be a Banach function space over the interval $(0,\infty )$, and assume that the Hardy operator is bounded on $\BFS$. Put $\BBFS = (\BFS ,\BFS )$. Then, for every $x\in \overline{\dom{A}}$ and for every $\tau >0$ with $\tau\omega <1$,
\begin{aufzi}
    \item \label{th4.2.a}
    \begin{equation*}
 	\frac12 \, \cN_\BFS^{\tau} (x) \leq \Norm[\BFS]{ \lambda\mapsto \frac{\norm[\X]{x - J_\lambda x}}{\lambda} \chi_{(0,\tau )} (\lambda )} \leq \frac{2-\tau\omega}{1-\tau\omega} \, \cN_\BFS^{\tau} (x) .
 	\end{equation*}
    \item \label{th4.2.b}
	\begin{multline*}
	(1-\tau\omega)\, \Norm[\BFS]{ \lambda \to \frac{\ud}{\ud\lambda} \Var_{J_\lambda x} \,\chi_{(0,\tau )} (\lambda ) } \leq \Norm[\BFS]{ \lambda\mapsto \frac{\norm[\X]{x - J_\lambda x}}{\lambda} \,\chi_{(0,\tau )} (\lambda ) } \\
    \leq \Norm[\mathcal{L}(\BFS)]{P} \, \Norm[\BFS]{ \lambda \to \frac{\ud}{\ud\lambda} \Var_{J_\lambda x} \,\chi_{(0,\tau )} (\lambda ) } . 
	\end{multline*}
    \item \label{th4.2.c}
    \begin{equation*}
 	\frac{1}{2\| P\|_{\cL (\BFS )}} \, \frac{1-\tau\omega}{3+e^{\omega\tau}}\, \cN_\BFS^\tau (x) \leq \Norm[\BFS]{ t \mapsto \frac{\norm[\X]{x - S(t) x}}{t} \chi_{(0,\tau )} (t) } \leq (1+e^{\omega\tau}) \, \cN_\BFS^\tau (x) . 
 	\end{equation*}
	\item \label{th4.2.d}
	\begin{equation*}
	\frac12 \,\cN_\BBFS^{C,\tau} (x) \leq \Norm[\BFS]{ \lambda\mapsto \frac{\norm[\X]{x - J_\lambda x}}{\lambda} \, \chi_{(0,\tau )} (\lambda )} \leq \frac{2-\tau\omega}{1-\tau\omega} \, \cN_\BBFS^{L^0,\tau} (x) 
	\end{equation*}
	\item \label{th4.2.e} If $\X$ has the Radon-Nikodym property, then 
	\begin{equation*}
	\cN_\BBFS^{tr,\tau} (x) \leq \frac{2}{1-\tau\omega} \, \Norm[\BFS]{ \lambda\mapsto \frac{\norm[\X]{x - J_\lambda x}}{\lambda} \chi_{(0,\tau )} (\lambda )}
	\end{equation*}
	\end{aufzi}
	If the operator $A$ is $m$-accretive of type $0$ and if $A$ is in addition surjective, then the inequalities in \ref{th4.2.a}--\ref{th4.2.d} hold for the interpolation functions $\cN_\BFS$ (instead of $\cN_\BFS^\tau$) and $\cN_\BBFS^C$ and $\cN_\BBFS^{L^0}$ (instead of $\cN_\BBFS^{C,\tau}$ and $\cN_\BBFS^{L^0,\tau}$) and without the characteristic functions $\chi_{(0,\tau )}$ in the $\BFS$-norms.
	\end{corollary}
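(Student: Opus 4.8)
Fix $x\in\overline{\dom A}$; in items~\ref{th4.2.a}--\ref{th4.2.e} also fix $\tau>0$ with $\tau\omega<1$, and write $N:=\Norm[\BFS]{\lambda\mapsto\frac{\norm[\X]{x-J_\lambda x}}{\lambda}\chi_{(0,\tau )}}$ for the resolvent quantity occurring in~\ref{th4.2.a}, \ref{th4.2.b}, \ref{th4.2.d}, \ref{th4.2.e}. The plan is to combine the pointwise estimates of Theorem~\ref{th4.1} with $\BFS$-norm bounds, the equivalence of the $K$- and mean methods (Lemma~\ref{lem.k-mean}), and the resolvent path $\lambda\mapsto J_\lambda x$ as a test function for the mean and trace methods. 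For the $\BFS$-norm step the recipe is always: take the relevant inequality of Theorem~\ref{th4.1}, multiply by $\chi_{(0,\tau )}$, replace the $t$-dependent factors by the constants bounding them on $(0,\tau )$ — here $\omega\ge 0$ gives on $(0,\tau )$ the bounds $\frac{2-t\omega}{1-t\omega}\le\frac{2-\tau\omega}{1-\tau\omega}$, $1+e^{\omega t}\le 1+e^{\omega\tau}$, $\frac{3-t\omega+e^{\omega t}}{1-t\omega}\le\frac{3+e^{\omega\tau}}{1-\tau\omega}$, $1-t\omega\ge 1-\tau\omega$ — and then apply the $\BFS$-norm, using the Banach ideal property of $\BFS$ and, wherever an average $\frac1t\int_0^t$ appears, boundedness of the Hardy operator $P$ on $\BFS$ (and the trivial $Pg\cdot\chi_{(0,\tau )}\le P(g\,\chi_{(0,\tau )})$ for $g\ge 0$, which pulls the cut-off inside $P$). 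Since $\cN_\BFS^\tau (x)=\Norm[\BFS]{t\mapsto\frac{K(x,t)}{t}\chi_{(0,\tau )}}$, item~\ref{th4.2.a} is obtained by taking $\BFS$-norms in Theorem~\ref{th4.1}\ref{eq 4.1.1}, and item~\ref{th4.2.b} by taking $\BFS$-norms in Theorem~\ref{th4.1}\ref{eq 4.1.2} (left half via $1-t\omega\ge 1-\tau\omega$; right half via $\frac1t\int_0^t\frac{\ud}{\ud s}\Var_{J_\cdot x}(s)\,\ud s=P\big(\frac{\ud}{\ud s}\Var_{J_\cdot x}\big)(t)$ and boundedness of $P$).

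For~\ref{th4.2.c} the upper bound is Theorem~\ref{th4.1}\ref{eq 4.1.3} restricted to $(0,\tau )$. The lower bound is produced by chaining: Theorem~\ref{th4.1}\ref{eq 4.1.4}, the elementary estimate $\frac{1}{t^2}\int_0^t\norm[\X]{x-S(s)x}\,\ud s\le P\big(s\mapsto\frac{\norm[\X]{x-S(s)x}}{s}\big)(t)$ (because $\frac1t\le\frac1s$ on $(0,t)$), and boundedness of $P$ yield $N\le\frac{3+e^{\omega\tau}}{1-\tau\omega}\Norm[\cL(\BFS)]{P}\,\Norm[\BFS]{t\mapsto\frac{\norm[\X]{x-S(t)x}}{t}\chi_{(0,\tau )}}$, and combining this with $\frac12\cN_\BFS^\tau(x)\le N$ from~\ref{th4.2.a} gives exactly the claimed inequality. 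The right half of~\ref{th4.2.d} is~\ref{th4.2.a} composed with $\cN_\BFS^\tau\le\cN_\BBFS^{L^0,\tau}$ from Lemma~\ref{lem.k-mean}. For the left half, the plan is to feed the resolvent path into the mean method: put $u(\lambda ):=J_{\min\{\lambda ,\tau\}}x$ for $\lambda>0$ and $u(0):=x$; by~\eqref{eq.res.lip} the map $\lambda\mapsto J_\lambda x$ is locally Lipschitz on $(0,\infty )$ and $J_\lambda x\to x$ as $\lambda\to 0^+$ for $x\in\overline{\dom A}$, so $u\in C([0,\infty );\X )$ and is admissible in $\cN_\BBFS^{C,\tau}$; since $u=J_\cdot x$ on $(0,\tau )$ and $\cN_1(J_t x)=|AJ_tx|\le\norm[\X]{A_t x}=\frac{\norm[\X]{x-J_tx}}{t}$, this $u$ gives $\cN_\BBFS^{C,\tau}(x)\le 2N$.

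Item~\ref{th4.2.e} uses the same $u$ in the trace method; the step needing care is that, when $\X$ has the Radon-Nikodym property, one must know $u\in W^{1,1}_{loc}([0,\infty );\X )$, i.e.\ (as $u$ is constant on $[\tau ,\infty )$) that $\lambda\mapsto J_\lambda x$ is absolutely continuous on $[0,\tau ]$. By Theorem~\ref{th4.1}\ref{eq 4.1.2} the a.e.\ derivative of $\Var_{J_\cdot x}$ is dominated on $(0,\tau )$ by $\frac{1}{1-\tau\omega}\frac{\norm[\X]{x-J_\lambda x}}{\lambda}$, which is integrable there since $\BFS\subseteq L^1+L^\infty$ forces $\frac{\norm[\X]{x-J_\cdot x}}{\cdot}\chi_{(0,\tau )}\in L^1(0,\tau )$ (if it does not, then $N=\infty$ and~\ref{th4.2.e} is trivial); together with $\sup_{0\le s\le\varepsilon}\norm[\X]{x-J_s x}\to 0$, which rules out variation of $J_\cdot x$ concentrated at $0$, this yields the absolute continuity, and then the Radon-Nikodym property gives $u\in W^{1,1}_{loc}$. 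Finally $\cN_0(\dot u(\lambda ))=\Norm[\X]{\frac{\ud}{\ud\lambda}J_\lambda x}\le\frac{1}{1-\lambda\omega}\frac{\norm[\X]{x-J_\lambda x}}{\lambda}$ (letting $\mu\to\lambda$ in~\eqref{eq.res.lip}) and $\cN_1(u(\lambda ))\le\frac{\norm[\X]{x-J_\lambda x}}{\lambda}$, the first being at most $\frac{1}{1-\tau\omega}$ times the second on $(0,\tau )$, so the definition of the trace function gives $\cN_\BBFS^{tr,\tau}(x)\le\big(\frac{1}{1-\tau\omega}+1\big)N=\frac{2-\tau\omega}{1-\tau\omega}N\le\frac{2}{1-\tau\omega}N$.

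For the concluding assertion: when $\omega=0$ the condition $t\omega<1$ is vacuous, every $J_\lambda x$ and $S(t)x$ exists for all $t>0$, and the constants in Theorem~\ref{th4.1} no longer depend on $\tau$, so all of the above goes through verbatim with the $\BFS$-norms taken over $(0,\infty )$ and the cut-offs $\chi_{(0,\tau )}$ dropped; for the mean and trace parts one uses $u(\lambda )=J_\lambda x$ globally and applies Lemma~\ref{lem.k-mean} (and, for the trace part, Lemma~\ref{lem.trace.mean}) over $(0,\infty )$. Surjectivity of $A$ is used only to produce $v_0$ with $0\in Av_0$, so that $\cN_1(v_0)=0$: this is the properness condition of Remark~\ref{rem.k}~(c) under which $\cN_\BFS$ can be finite, and it is precisely the hypothesis of Lemma~\ref{3.11} that makes $\dom{\cN_\BFS^\tau}$ and $\dom{\cN_\BFS}$ coincide. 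I expect the only genuinely delicate point to be the absolute continuity of $\lambda\mapsto J_\lambda x$ up to $\lambda=0$ for $x$ merely in $\overline{\dom A}$, needed in~\ref{th4.2.e}; everything else is routine bookkeeping once Theorem~\ref{th4.1} is available.
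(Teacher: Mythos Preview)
Your proposal is correct and follows essentially the same route as the paper: deduce \ref{th4.2.a}--\ref{th4.2.c} from the pointwise estimates of Theorem~\ref{th4.1} via the ideal property of $\BFS$ and boundedness of the Hardy operator, and obtain \ref{th4.2.d}--\ref{th4.2.e} by feeding the resolvent path $\lambda\mapsto J_\lambda x$ into the mean and trace methods (combined with Lemma~\ref{lem.k-mean} for the upper bound in \ref{th4.2.d}). Your treatment of the absolute continuity of $\lambda\mapsto J_\lambda x$ up to $\lambda=0$ in \ref{th4.2.e} is in fact a bit more explicit than the paper's; the paper simply invokes local Lipschitz continuity on $(0,\tau)$, the Radon--Nikodym property, and integrability of the derivative bound to conclude $w\in W^{1,1}(0,\tau;\X)$ with $w(0)=x$.
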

	
	\begin{proof}
The statements are direct consequences of the pointwise estimates proved in Theorem \ref{th4.1}, the ideal property of the space $\BFS$ and, if necessary, the boundedness of the Hardy operator on $\BFS$. Statement \ref{th4.2.a} 
follows directly from Theorem \ref{th4.1} \ref{eq 4.1.1}, and statement \ref{th4.2.b} 
follows from Theorem \ref{th4.1} \ref{eq 4.1.2}. For the statement \ref{th4.2.c} 
one combines Theorem \ref{th4.1} \ref{eq 4.1.1}, \ref{eq 4.1.2} and \ref{eq 4.1.4}. 

From the definition of $\cN_{\BBFS}^{C,\tau}$ as an infimum we have
    \begin{align*}
    \cN_{\BBFS}^{C,\tau}(x) & \leq \Norm[\BFS]{ \lambda \to \frac{\norm[\X]{x - J_{\lambda}x}}{\lambda} \, \chi_{(0,\tau )} (\lambda )} + \Norm[\BFS]{\lambda \to \vert AJ_{\lambda}x \vert\, \chi_{(0,\tau )} (\lambda )}\\
    & \leq 2  \Norm[\BFS]{ \lambda \to \frac{\norm[\X]{x - J_{\lambda}x}}{\lambda} \, \chi_{(0,\tau )} (\lambda )} ,
\end{align*}	
which is the first inequality in \ref{th4.2.d}; note also that the resolvent is continuous up to $0$ and $\lim_{\lambda\to 0+} J_\lambda x = x$ for every $x\in\overline{\dom{A}}$. The second inequality in \ref{th4.2.d} follows from statement \ref{th4.2.a} above, combined with Lemma \ref{lem.k-mean} ($\cN_\BFS^\tau \leq \cN_\BBFS^{L^0,\tau}$).

Finally, assume that $\X$ has the Radon-Nikodym property. By \eqref{eq.res.lip}, for every $x\in\overline{\dom{A}}$ the function $t\mapsto J_{t} x$ is locally Lipschitz continuous on $(0,\tau )$ for every $\tau >0$ with $\omega\tau <1$. The Radon-Nikodym property implies that this function actually is in $W^{1,\infty}_{loc} ((0,\tau );\X )$, and in particular it is differentiable almost everywhere. By the equality \eqref{eq.dotf.varf} and by Theorem \ref{th4.1} \ref{th4.2.b}, for every $x\in\overline{\dom{A}}$ and for almost every $\lambda\in (0,\tau )$, 
\begin{align*}
\norm[\X]{\frac{\ud}{\ud\lambda} J_\lambda x} & = \frac{\ud}{\ud\lambda} \Var_{J_\cdot x} (\lambda ) \\
& \leq \frac{1}{1-\lambda\omega} \frac{\norm[\X]{x - J_{\lambda}x}}{\lambda} .
\end{align*}
Now, if the function $\lambda\mapsto \frac{\norm[\X]{x-J_\lambda x}}{\lambda} \chi_{(0,\tau )} (\lambda)$ belongs to $\BFS$, then, by the preceding inequality, the function $w (\lambda ) := J_\lambda x$ belongs to $W^{1,1} (0,\tau ;\X )$, $w(0) = x$, $\dot{w} \, \chi_{(0,\tau )}\in\BFS$ and, therefore,
\begin{align*}
    \cN_{\BBFS}^{tr,\tau} (x) & \leq \Norm[\BFS]{ \lambda \to \norm[\X]{\frac{\ud}{\ud\lambda} J_\lambda x} \, \chi_{(0,\tau )} (\lambda )} + \Norm[\BFS]{t \to \vert AJ_{\lambda}x \vert\, \chi_{(0,\tau )} (\lambda )}\\
    & \leq \frac{2}{1-\tau\omega}  \Norm[\BFS]{ \lambda \to \frac{\norm[\X]{x - J_{\lambda}x}}{\lambda} \, \chi_{(0,\tau )} (\lambda )} .
\end{align*}
\end{proof}

In the following, besides the operator $A$, we consider also the operators $I+hA$ for $h>0$. If $A$ is $m$-accretive of type $\omega\in\R$ and if $h\omega <1$, then $I+hA$ is $m$-accretive of type $0$ {\em and} surjective.   Accordingly, and with some slight abuse of notation, we consider the interpolation couples $\cN^A = (\norm[\X]{\cdot} , |A\cdot |)$ and $\cN^{I+hA} = (\norm[\X]{\cdot} , |(I+hA)(\cdot )| )$ and the associated $K$-functions $K^A$ and $K^{I+hA}$. 

\begin{corollary} \label{th4.3}
Let $A$ be an $m$-accretive operator of type $\omega\geq 0$ on a Banach space $\X$, and let $S^A$ be the semigroup generated by $ -A$ and $S^{I+hA}$ be the semigroup generated by $-(I+hA)$ ($h>0$). Let $\cN^A = (\norm[\X]{\cdot} , |A\cdot |)$ and $\cN^{I+hA} = (\norm[\X]{\cdot} , |(I+hA)(\cdot )| )$ be the associated interpolation couples, and $K^A$ and $K^{I+hA}$ the associated $K$-functions. Let $\BFS$ be a Banach function space over the interval $(0,\infty)$, and assume that the Hardy operator is bounded on $\BFS$. Let $\BBFS = (\BFS , \BFS )$. Then for any $\tau$, $h >0$ with  $\tau \omega<1$ and $h\omega <1$, 
\begin{align*} 
\dom{A} & \subseteq  
  \left\{ x \in \X \st t \mapsto \frac{ \norm[\X]{ x - J^{A}_t x }}{t}\chi_{(0,\tau)}(t)  \in \BFS \right\} \\
&=   \left\{ x \in \X \st t \mapsto \frac{ \norm[\X]{ x - J^{I+hA}_t x }}{t}  \in \BFS \right\} \\
&= \left\{ x \in \overline{ \Dom{A}} \st  t\mapsto \frac{\norm[\X]{ x - S^{A}(t)x}}{t}\chi_{(0,\tau)(t)} \in \BFS   \right\} \\
&= \left\{ x \in \overline{ \Dom{A}} \st  t\mapsto \frac{\norm[\X]{ x - S^{I+hA}(t)x}}{t} \chi_{(0,\tau )} (t) \in \BFS   \right\} \\
& = \left\{ x \in \X \st t \mapsto \frac{ K^{A}(x,t) }{t}\chi_{(0,\tau)}(t) \in \BFS \right\}  = \dom{(\cN^A)_\BFS^{\tau}} \\
& = \left\{ x \in \X \st t \mapsto \frac{ K^{I+hA}(x,t) }{t}\chi_{(0,\tau)}(t) \in \BFS \right\}  = \dom{(\cN^{I+hA})_\BFS^{\tau}} \\
& = \left\{ x \in \X \st t \mapsto \frac{ K^{I+hA}(x,t) }{t} \in \BFS \right\}  = \dom{(\cN^{I+hA})_\BFS} \\
& = \left\{ x \in \X \st t \mapsto \frac{\ud}{\ud t} \Var_{{J^{A}_t}u}\,\chi_{(0,\tau)}(t) \in \BFS \right\}  \\
& = \left\{ x \in \X \st t \mapsto \frac{\ud}{\ud t} \Var_{{J^{I+hA}_t}u} \in \BFS \right\}  \\
& = \dom{(\cN^A)_\BBFS^{L^0,\tau}} = \dom{(\cN^A)_\BBFS^{C,\tau}} \\
& = \dom{(\cN^{I+hA})_\BBFS^{L^0,\tau}} = \dom{(\cN^{I+hA})_\BBFS^{C,\tau}} \\
& \subseteq \overline{\dom{A}}.
\end{align*}
If the Banach space $\X$ has the Radon-Nikodym property, then one has in addition the equality 
\[
\dom{(\cN^A)_\BFS^\tau} = \dom{(\cN^A)_\BFS^{tr,\tau}} .
\]
\end{corollary}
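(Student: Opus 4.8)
The plan is to read the displayed chain as a web of equalities of effective domains and to sort its links into three types: (i) equalities between the various objects attached to one and the same operator (the $K$-function, the resolvent quotient $\tfrac{\norm[\X]{x-J_t x}}{t}$, the variation quotient $\tfrac{\ud}{\ud t}\Var_{J_t x}$, the semigroup quotient $\tfrac{\norm[\X]{x-S(t)x}}{t}$, and the two mean-method functions); (ii) the passage from a ``finite'' interpolation function to its unrestricted counterpart, carried out in the statement only for $I+hA$; and (iii) a bridge identifying the $A$-objects with the $I+hA$-objects. Links of type (i) come essentially verbatim from Corollary~\ref{th4.2} and Lemma~\ref{lem.k-mean}, link (ii) from Lemma~\ref{3.11}, and link (iii)\,---\,the one genuinely new ingredient\,---\,from a short comparison of the two $K$-functionals. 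On top of these there are the two extreme inclusions and the Radon--Nikodym addendum.

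For the operator $A$ (recall $\tau\omega<1$), Corollary~\ref{th4.2}~\ref{th4.2.a}--\ref{th4.2.d}, together with the trivial inequality $\cN_\BBFS^{L^0,\tau}\le\cN_\BBFS^{C,\tau}$ and with Lemma~\ref{lem.k-mean} ($\cN_\BFS^\tau\le\cN_\BBFS^{L^0,\tau}\le 2\,\cN_\BFS^\tau$), show that $(\cN^A)_\BFS^\tau$, the resolvent quotient times $\chi_{(0,\tau)}$, the variation quotient times $\chi_{(0,\tau)}$, the semigroup quotient times $\chi_{(0,\tau)}$, and $(\cN^A)_\BBFS^{L^0,\tau}$, $(\cN^A)_\BBFS^{C,\tau}$ are mutually equivalent functions, hence have one common effective domain; the boundedness of $P$ on $\BFS$ is what is needed for the variation quotient. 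I would then repeat this for $B:=I+hA$, which, as recalled just before the corollary, is $m$-accretive of type $0$ and surjective when $h\omega<1$. Corollary~\ref{th4.2} applied to $B$ once with the cut-offs and once in its type-$0$/surjective form yields the analogous common domain both with $\chi_{(0,\tau)}$ and without it, and involving the unrestricted functions $(\cN^B)_\BFS$, $(\cN^B)_\BBFS^{L^0}$, $(\cN^B)_\BBFS^{C}$. Finally, $\cN_0=\norm[\X]{\cdot}$ is finite on all of $\X$, and surjectivity of $B$ provides $v_0$ with $0\in Bv_0$, i.e.\ $\cN^B_1(v_0)=0$; so Lemma~\ref{3.11} applies to $\cN^B$ and gives $\dom{(\cN^B)_\BFS^\tau}=\dom{(\cN^B)_\BFS}$, welding the finite and unrestricted columns together.

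It remains to bridge the $A$-side and the $B$-side, which I would do through the $K$-functionals rather than the resolvents. From $|(I+hA)v|\le\norm[\X]{v}+h\,|Av|$, from $|Av|\le h^{-1}\bigl(|(I+hA)v|+\norm[\X]{v}\bigr)$, and from $\norm[\X]{v}\le\norm[\X]{x-v}+\norm[\X]{x}$, a direct estimate gives, for every $x\in\X$ and every $t\in(0,\tau)$,
\[
K^{I+hA}(x,t)\le t\,\norm[\X]{x}+\max\{1+\tau,\,h\}\,K^A(x,t),
\]
\[
K^{A}(x,t)\le \tfrac{\tau}{h}\,\norm[\X]{x}+\max\{1+\tfrac{\tau}{h},\,\tfrac{1}{h}\}\,K^{I+hA}(x,t).
\]
Dividing by $t$, multiplying by $\chi_{(0,\tau)}$, and using that $\norm[\X]{x}\,\chi_{(0,\tau)}\in L^1\cap L^\infty(0,\infty)\subseteq\BFS$ together with the ideal property of $\BFS$, one gets $t\mapsto\tfrac{K^A(x,t)}{t}\chi_{(0,\tau)}(t)\in\BFS$ if and only if $t\mapsto\tfrac{K^{I+hA}(x,t)}{t}\chi_{(0,\tau)}(t)\in\BFS$, i.e.\ $\dom{(\cN^A)_\BFS^\tau}=\dom{(\cN^{I+hA})_\BFS^\tau}$, and the chain of equalities is complete. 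For the extreme inclusions: if $x\in\dom A$ then $\tfrac{\norm[\X]{x-J^A_t x}}{t}=\norm[\X]{A_t x}\le(1-t\omega)^{-1}|Ax|\le(1-\tau\omega)^{-1}|Ax|$ for $t\in(0,\tau)$, a bounded function of compact support, hence in $\BFS$; and if $t\mapsto\tfrac{K^A(x,t)}{t}\chi_{(0,\tau)}(t)\in\BFS\subseteq L^1+L^\infty(0,\infty)$, then, being supported in $(0,\tau)$, this function lies in $L^1(0,\tau)$, so $\liminf_{t\to0+}K^A(x,t)=0$, which as in Remark~\ref{rem.k}(c) forces $x\in\overline{\dom A}$. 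Since moreover $K^A(x,t)\le 2\,\norm[\X]{x-J^A_t x}$, the same argument shows that restricting the resolvent- and semigroup-defined sets to $x\in\overline{\dom A}$, where Corollary~\ref{th4.2} is stated, does not change them.

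For the Radon--Nikodym addendum, Lemma~\ref{lem.trace.mean} (admissible since $P$ is bounded on $\BFS$) gives $\cN_\BBFS^{W^{1,1},\tau}\le\Norm[\cL(\BFS)]{P}\,\cN_\BBFS^{tr,\tau}$; as $\cN_\BBFS^{L^0,\tau}\le\cN_\BBFS^{W^{1,1},\tau}$ and $\dom{(\cN^A)_\BBFS^{L^0,\tau}}=\dom{(\cN^A)_\BFS^\tau}$ (Lemma~\ref{lem.k-mean}), this yields $\dom{(\cN^A)_\BBFS^{tr,\tau}}\subseteq\dom{(\cN^A)_\BFS^\tau}$, while the reverse inclusion is Corollary~\ref{th4.2}~\ref{th4.2.e} combined with \ref{th4.2.a}. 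The step I expect to require the most care is the bridge (iii): although the comparison of $K^A$ and $K^{I+hA}$ is elementary, one should avoid comparing $J^A$ and $J^{I+hA}$ directly, because $J^{I+hA}_t x=J^A_{th/(1+t)}\!\bigl(x/(1+t)\bigr)$ involves a nonlinear change of the time variable under which a general Banach function space $\BFS$ need not be invariant; keeping everything in the single variable $t$ via the $K$-functionals sidesteps this. Beyond that, the remaining effort is bookkeeping\,---\,tracking which item of Corollary~\ref{th4.2}, in its $\tau$-form or its type-$0$/surjective form, produces each of the individual equalities in the displayed chain.
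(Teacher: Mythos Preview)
Your proposal is correct and follows essentially the same route as the paper: the type-(i) equalities via Corollary~\ref{th4.2} and Lemma~\ref{lem.k-mean}, the finite-to-unrestricted passage for $I+hA$ via Lemma~\ref{3.11} (using surjectivity), the bridge via a direct comparison of $K^A$ and $K^{I+hA}$, and the Radon--Nikodym addendum via Lemma~\ref{lem.trace.mean} and Corollary~\ref{th4.2}\ref{th4.2.e}. One small slip: in your second $K$-comparison the error term should be $\tfrac{t}{h}\norm[\X]{x}$ rather than $\tfrac{\tau}{h}\norm[\X]{x}$ (so that after dividing by $t$ you indeed obtain the bounded function $\tfrac{1}{h}\norm[\X]{x}\,\chi_{(0,\tau)}$ you invoke); with this correction your argument matches the paper's exactly.
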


\begin{proof}
We first note here that the boundedness of the Hardy operator implies that for any $ x \in X, $ $ t \mapsto \frac{ \norm[\X]{ x - J^{A}_t x }}{t} \chi_{(0,\tau)}(t)  \in \BFS$ implies that $ x \in \overline{\dom{A}}.$
Now, the equalities of the sets involving the operator $A$, that is, the sets appearing in the first, third, fifth, eight and tenth line are a consequence of Corollary \ref{th4.2}; recall in particular the obvious inequality $(\cN^A)_\BBFS^{L^0,\tau} \leq (\cN^A)_\BBFS^{C,\tau} $. 
Similarly, the equalities of the sets involving the operator $I+hA$ and appearing in the second, fourth, sixth, ninth and eleventh line are a consequence of Corollary \ref{th4.2}. The equality $\dom{(\cN^{I+hA})_\BFS^{\tau}} = \dom{(\cN^{{I+hA}})_\BFS}$ follows from Lemma \ref{3.11}; in order to be able to apply this lemma we use the fact that $I+hA$ is surjective so that the set norm $|(I+hA)(v_0)|$ is equal to $0$ for some $v_0\in \X$. The remaining equality $\dom{(\cN^A)_\BFS^{\tau}} = \dom{(\cN^{I+hA})_\BFS^{\tau}}$ uses the following estimates. For every $x$, $v \in \X$ and $t\in (0,\tau )$,
\begin{align*}
\frac{ K^{I+hA}(x,t) }{t} &\leq \frac{\Norm[\X]{x-v}}{t} + |(I+hA)(v)| \\
& \leq \frac{\Norm[\X]{x-v}}{t} + h|Av| + \frac{\tau}{t}\Norm[\X]{x-v} +  \Norm[\X]{x} .
\end{align*}
Taking the infimum over all $v\in \X$ yields
\begin{equation*}
\frac{ K^{I+hA}(x,t) }{t} \leq \max\{ 1+\tau,h\} \, \frac{ K^{A}(x,t) }{t} + \Norm[\X]{x} 
\end{equation*}
for $t\in (0,\tau )$. Conversely for every $x$, $v \in \X$ and $t\in (0,\tau )$,
\begin{align*}
\frac{ K^{A}(x,t) }{t} 
& \leq \frac{\Norm[\X]{x-v}}{t} + |Av| \\
& \leq \frac{\Norm[\X]{x-v}}{t} + \frac{|(I+hA)(v)|}{h} + \frac{\tau}{h}\frac{\Norm[\X]{x-v}}{t} + \Norm[\X]{x} .
\end{align*}
Taking the infimum over all $v\in \X$ yields
\begin{equation*}
\frac{ K^{A}(x,t) }{t} \leq  \max \{ \frac{1}{h}, 1+\frac{\tau}{h} \} \, \frac{ K^{(I + hA)}(x,t) }{t} + \Norm[\X]{x}.
\end{equation*}
Multiplying these inequalities with the characteristic function $\chi_{(0,\tau )}$ and then taking the norm in $\BFS$ yields the remaining equality.

Concerning the two inclusions, note that for $x\in\dom{A}$ and $t>0$, $\frac{K(x,t)}{t} \leq |Ax|$, that is, the function $t\mapsto \frac{K(x,t)}{t}$ is bounded on every bounded interval $(0,\tau )$. Since $L^1 \cap L^\infty (0,\infty )\subseteq\BFS$, this implies $t\mapsto \frac{K(x,t)}{t} \chi_{(0,\tau )} (t) \in\BFS$. Hence, $(\cN^A)_\BFS^\tau (x) <\infty$, that is, $x\in\dom{(\cN^A)_\BFS^\tau}$, and we have proved the first inclusion.

The second inclusion follows similarly. For $x\not\in\overline{\dom{A}}$ and $t>0$, $\frac{K(x,t)}{t} \geq \frac{C}{t}$, where $C = {\rm dist}\, (x,\dom{A}) >0$. In particular, the function $t\mapsto \frac{K(x,t)}{t}$ is not integrable on any bounded interval $(0,\tau )$, and hence not an element of $\BFS$. It follows that $x\not\in\dom{(\cN^A)_\BFS^\tau}$.

Finally, assume that $\X$ has the Radon-Nikodym property. Then, by Corollary \ref{th4.2} \ref{th4.2.d} and \ref{th4.2.e}, $(\cN^A)_\BBFS^{tr,\tau} \leq \frac{2(2-\tau\omega)}{(1-\tau\omega)^2} (\cN^A)_\BBFS^{L^0,\tau}$. On the other hand, by a trivial estimate and by Lemma \ref{lem.trace.mean}, $(\cN^A)_\BBFS^{L^0,\tau} \leq (\cN^A)_\BBFS^{W^{1,1},\tau} \leq \Norm[\cL (\BFS )]{P} \, (\cN^A)_\BBFS^{tr,\tau}$, and hence, the interpolation functions $(\cN^A)_\BBFS^{tr,\tau}$ and $(\cN^A)_\BBFS^{L^0,\tau}$ are equivalent. Together with the above equalities of domains this yields the final claim.
\end{proof}

The following theorem is the motivation for the interest in interpolation functions and their effective domains in the context of interpolation associated with $m$-accretive operators. The effective domains of interpolation functions are somehow interpolation sets between the domain of an operator $A$ and the closure of its domain (have in mind, however, Remark \ref{rem.k} (b)). Recall that the semigroup orbits corresponding to initial values in the domain of the generator are Lipschitz continuous, while those corresponding to initial values in the closure of the domain are merely continuous. If an initial value belongs to the effective domain of an interpolation function, then the corresponding semigroup orbit has a prescribed regularity between Lipschitz continuity and mere continuity. The interpolation space between the {\em Banach spaces} $C$ and $Lip$ appearing in the following theorem is the classical real interpolation space, with the exception perhaps that we do not restrict ourselves to interpolation using polynomially weighted $L^p$-spaces but that we work with more general Banach function spaces.

\begin{theorem}
Let $A$ be an $m$-accretive operator of type $\omega\geq 0$ on a Banach space $\X$, and let $S$ be the semigroup generated by $ -A$. Let $\cN^A = (\norm[\X]{\cdot} , \vert A\cdot \vert )$ be the interpolation couple associated with $A$. Let $\BFS$ be a Banach function space over the interval $(0,\infty)$, and assume that the Hardy operator is bounded on $\BFS$. Then, for every $x\in\dom{(\cN^A)_\BFS^\tau}$ and for every $T\geq 0$,
\[
 S(\cdot )x \in (C([0,T];\X ) , \textrm{Lip} ([0,T];\X ))_\BFS .
\]
\end{theorem}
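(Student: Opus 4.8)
The plan is to reduce the claim to a finiteness statement about the ``finite'' mean--method functional of the orbit. Write $\cM = (\cM_0 ,\cM_1) := \bigl( \norm[{C([0,T];\X )}]{\cdot} , \norm[{\textrm{Lip}([0,T];\X )}]{\cdot}\bigr)$ for the interpolation couple of genuine norms on the Banach space $C([0,T];\X )$, so that $\bigl( C([0,T];\X ), \textrm{Lip}([0,T];\X )\bigr)_\BFS = \dom{\cM_\BFS}$ by Remark \ref{rem.k}~(a). Now $\cM_0$ is finite valued on all of $C([0,T];\X )$, $\cM_1$ vanishes at the zero function, and the Hardy operator is bounded on $\BFS$; hence Lemma \ref{3.11} gives $\dom{\cM_\BFS} = \dom{\cM_\BFS^\tau}$, and Lemma \ref{lem.k-mean} (with $\BBFS = (\BFS ,\BFS )$) together with the obvious inequality $\cM_\BBFS^{L^0,\tau}\le\cM_\BBFS^{C,\tau}$ gives $\dom{\cM_\BFS^\tau} = \dom{\cM_\BBFS^{L^0,\tau}} \supseteq \dom{\cM_\BBFS^{C,\tau}}$. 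It therefore suffices to prove $\cM_\BBFS^{C,\tau}(S(\cdot )x) < \infty$.

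Replacing $\tau$ by a smaller number if necessary (which only enlarges $\dom{(\cN^A)_\BFS^\tau}$), we may assume $\tau\omega < 1$. By Corollary \ref{th4.3} -- equivalently by Corollary \ref{th4.2}~\ref{th4.2.a} -- the hypothesis $x\in\dom{(\cN^A)_\BFS^\tau}$ is equivalent to: $x\in\overline{\dom A}$ (so $S(\cdot )x\in C([0,T];\X )$ is defined) \emph{and} the function $\psi (t) := \tfrac1t\norm[\X]{x - J_t x}\,\chi_{(0,\tau )}(t)$ belongs to $\BFS$. As the competitor in the infimum defining $\cM_\BBFS^{C,\tau}(S(\cdot )x)$ I would take, for $0<t<\tau$, $w(t)\in C([0,T];\X )$ to be the restriction to $[0,T]$ of the orbit $s\mapsto S(s)J_t x$ started at $J_t x\in\dom A$, extended continuously and arbitrarily for $t\ge\tau$ with $w(0):=S(\cdot )x$; this $w$ is continuous because $t\mapsto J_t x$ is continuous into $\X$ with $J_tx\to x$ as $t\to 0+$ (since $x\in\overline{\dom A}$) and $v\mapsto S(\cdot )v$ is Lipschitz from $\overline{\dom A}$ into $C([0,T];\X )$ with constant $e^{\omega T}$. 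The decisive feature of this choice is that the Lipschitz regularity of the ``smoothed'' orbit $S(\cdot )J_t x$ is controlled by the \emph{same} quantity $\tfrac1t\norm[\X]{x - J_t x}$ that the hypothesis keeps under control, so no estimate beyond those of Theorem \ref{th4.1} is needed.

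Concretely, for $t\in (0,\tau )$: since each $S(s)$ is Lipschitz with constant $e^{\omega s}\le e^{\omega T}$,
\[
\norm[{C([0,T];\X )}]{S(\cdot )x - w(t)} = \sup_{0\le s\le T}\norm[\X]{S(s)x - S(s)J_tx}\le e^{\omega T}\,\norm[\X]{x - J_tx},
\]
so $\tfrac1t\norm[{C([0,T];\X )}]{S(\cdot )x - w(t)}\,\chi_{(0,\tau )}(t)\le e^{\omega T}\psi (t)$. Moreover $A_t x := \tfrac1t(x - J_tx)\in A J_tx$, hence $|AJ_tx|\le\norm[\X]{A_tx} = \tfrac1t\norm[\X]{x - J_tx}$, and \eqref{eq 4.0.2 } applied to $v := J_tx$ shows that $s\mapsto S(s)v$ is Lipschitz on $[0,T]$ with seminorm at most $e^{\omega T}\tfrac1t\norm[\X]{x-J_tx}$ and with sup norm at most $\norm[\X]{J_tx} + Te^{\omega T}\tfrac1t\norm[\X]{x-J_tx}\le\norm[\X]{x} + (\tau + Te^{\omega T})\tfrac1t\norm[\X]{x-J_tx}$; thus $\norm[{\textrm{Lip}([0,T];\X )}]{w(t)}\,\chi_{(0,\tau )}(t)\le\norm[\X]{x}\,\chi_{(0,\tau )}(t) + C\,\psi (t)$ for a constant $C = C(\omega ,T,\tau )$. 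Since $\chi_{(0,\tau )}\in L^1\cap L^\infty (0,\infty )\subseteq\BFS$ and $\psi\in\BFS$, the ideal property and the triangle inequality in $\BFS$ give
\[
\cM_\BBFS^{C,\tau}(S(\cdot )x)\le\bigl(e^{\omega T} + C\bigr)\,\Norm[\BFS]{\psi} + \norm[\X]{x}\,\Norm[\BFS]{\chi_{(0,\tau )}} < \infty ,
\]
which finishes the argument.

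I expect the only genuine choice here to be the competitor $w$; everything after it is routine use of the ideal property and the triangle inequality in $\BFS$, plus the bookkeeping of the first paragraph that converts a bound on $\cM_\BBFS^{C,\tau}$ into membership in the classical $K$-interpolation space. A self--contained alternative, not going through the resolvents, is to estimate $K\bigl(S(\cdot )x,t;C([0,T];\X ),\textrm{Lip}([0,T];\X )\bigr)$ directly via the Steklov average $h_t(s):=\tfrac1t\int_s^{s+t}S(r)x\,\ud r$: one gets $\norm[{C([0,T];\X )}]{S(\cdot )x - h_t}\le e^{\omega T}\tfrac1t\int_0^t\norm[\X]{x - S(r)x}\,\ud r$ and $[h_t]_{\textrm{Lip}}\le\tfrac{e^{\omega T}}{t}\norm[\X]{S(t)x - x}$, and then uses Theorem \ref{th4.1}~\ref{eq 4.1.3} to bring in the hypothesis and the boundedness of the Hardy operator on $\BFS$ both to absorb the average $\tfrac1t\int_0^t\norm[\X]{x-S(r)x}\,\ud r$ and to handle the range $t\ge\tau$, where $K\le\norm[{C([0,T];\X )}]{S(\cdot )x}$. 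Either way, the role of the assumption ``the Hardy operator is bounded on $\BFS$'' is precisely to let the integrated/averaged quantities be estimated inside $\BFS$.
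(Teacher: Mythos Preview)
Your argument is correct, but it takes a considerably longer route than the paper's. The paper bypasses the mean method entirely and works directly with the $K$-functional on the target couple: for any $v\in\dom A$ the function $S(\cdot)v$ lies in $\textrm{Lip}([0,T];\X)$ with Lipschitz norm at most $e^{\omega T}|Av|$ (by \eqref{eq 4.0.2 }) and $\|S(\cdot)x-S(\cdot)v\|_{C([0,T];\X)}\le e^{\omega T}\|x-v\|_\X$, so
\[
K\bigl(S(\cdot)x,t;C([0,T];\X),\textrm{Lip}([0,T];\X)\bigr)\le e^{\omega T}\bigl(\|x-v\|_\X+t\,|Av|\bigr),
\]
and taking the infimum over $v\in\dom A$ gives $K(S(\cdot)x,t)\le e^{\omega T}K^A(x,t)$ pointwise in $t$. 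Dividing by $t$ and taking the $\BFS$-norm over $(0,\tau)$ finishes (with Lemma~\ref{3.11} and the Hardy hypothesis bridging the gap between the finite and full interpolation functions, exactly as in your first paragraph).

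The essential difference is that the paper keeps $v\in\dom A$ \emph{free} and optimises, obtaining a clean pointwise comparison of $K$-functionals, whereas you fix the specific choice $v=J_tx$ and then funnel the resulting estimates through the mean-method functional $\cM_\BBFS^{C,\tau}$, which forces you to invoke Lemma~\ref{lem.k-mean} and Corollary~\ref{th4.2}\ref{th4.2.a} on top of Lemma~\ref{3.11}. Your approach does work and has the minor virtue of making the role of the Hardy hypothesis completely explicit; the paper's approach is shorter, avoids the resolvent altogether, and produces the stronger pointwise inequality $K(S(\cdot)x,t)\le e^{\omega T}K^A(x,t)$, which is of independent interest. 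Your Steklov-average alternative in the last paragraph is also valid and closer in spirit to the paper's direct $K$-functional attack.
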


\begin{proof}
Let $K(\cdot , \cdot , C([0,T];\X ) , \textrm{Lip} ([0,T];\X ) )$ be the $K$-function associated with the interpolation between the supremum norm and the Lipschitz norm, and let $K^A$ be the $K$-function associated with the interpolation couple $\cN^A$. Recall that for every $v\in\dom{A}$ the function $S(\cdot )v$ is Lipschitz continuous on the interval $[0,T]$ with Lipschitz constant $e^{\omega T} \, |Av|$ (see \eqref{eq 4.0.2 }). Hence, for every $t>0$,
 \begin{align*}
  \lefteqn{K(S(\cdot )x , t , C([0,T];\X ) , \textrm{Lip} ([0,T];\X ) )} \\
  & = \inf\{\Norm[\infty]{S(\cdot)x - g}+ t\Norm[Lip]{g} \st g\in \textrm{Lip}([0,T];\X)\}\\
  &\leq \inf\{ \Norm[\infty]{S(\cdot)x - S(\cdot)v} + t \Norm[Lip]{S(\cdot)v} \st v \in \dom{A}\}\\
  &\leq e^{\omega T}\inf \{ \Norm[\X]{x-v} + t  |Av| \st v\in \dom{A}\}\\
  &= e^{\omega T} K^A (x,t)
 \end{align*}
 and the result follows.
\end{proof}

\begin{theorem}[H\"older type estimate] \label{cor.holder}
Let $A$ be an $m$-accretive operator of type $\omega\geq 0$ on a Banach space $\X$, and let $S$ be the semigroup generated by $ -A$. Let $\cN^A = (\norm[\X]{\cdot} , \vert A\cdot \vert )$ be the interpolation couple associated with $A$. Let $\BFS$ be a Banach function space over the interval $(0,\infty)$, and assume that the Hardy operator is bounded on $\BFS$. Then, for every $\tau >0$ with $\tau \omega<1$, $x\in\dom{(\cN^A)_\BFS^\tau}$,  $T> 0$ and $t\in[0,T] $, $h\in (0,\tau)$ one has,
\[
 \norm[\X]{S(t+h)x - S(t)x} \leq e^{\omega T}(1+ e^{\omega \tau})\, \cN_\BFS^{\tau} (x)\, \frac{h}{\Norm[\BFS]{\chi_{(0,h)}}} .
\]

\end{theorem}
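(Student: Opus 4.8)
The plan is to reduce the increment $S(t+h)x - S(t)x$ to the single increment $S(h)x - x$ via the semigroup law and the Lipschitz bound on $S(t)$, and then to estimate $\norm[\X]{x - S(h)x}$ by combining the pointwise inequality of Theorem \ref{th4.1} \ref{eq 4.1.3} with the elementary monotonicity of $t\mapsto K(x,t)/t$. The route through the $K$-functional is essential here, since $s\mapsto \norm[\X]{x-S(s)x}/s$ need not be monotone, whereas $s\mapsto K(x,s)/s$ is, and this is what will allow the scalar $\Norm[\BFS]{\chi_{(0,h)}}$ to be factored out of the $\BFS$-norm.

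First I would record that $s\mapsto K(x,s)$ is concave on $(0,\infty )$, being an infimum of affine functions of $s$, and nonnegative; hence $s\mapsto K(x,s)/s$ is nonincreasing. Concretely: for every $v\in\X$ and $0<s<h$ one has $\cN_0 (x-v) + s\,\cN_1 (v) \geq \tfrac{s}{h}\bigl(\cN_0 (x-v) + h\,\cN_1 (v)\bigr) \geq \tfrac{s}{h}\,K(x,h)$, and taking the infimum over $v$ gives $K(x,h)/h \leq K(x,s)/s$. Since $0<h<\tau$, this yields the pointwise bound
\[
\frac{K(x,h)}{h}\,\chi_{(0,h)}(s) \;\leq\; \frac{K(x,s)}{s}\,\chi_{(0,h)}(s) \;\leq\; \frac{K(x,s)}{s}\,\chi_{(0,\tau )}(s) \qquad (s\in (0,\infty )),
\]
and applying $\Norm[\BFS]{\cdot}$ together with the ideal property of $\BFS$ (and noting $\chi_{(0,h)}\in L^1\cap L^\infty(0,\infty)\subseteq\BFS$, so $\Norm[\BFS]{\chi_{(0,h)}}\in (0,\infty )$) gives
\[
\frac{K(x,h)}{h}\,\Norm[\BFS]{\chi_{(0,h)}} \;\leq\; \Norm[\BFS]{ s\mapsto \tfrac{K(x,s)}{s}\,\chi_{(0,\tau )}(s)} \;=\; \cN_\BFS^{\tau}(x).
\]

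Next I would invoke Theorem \ref{th4.1} \ref{eq 4.1.3} with $t=h$ (legitimate because $h\omega\leq\tau\omega<1$ and $x\in\overline{\dom{A}}$, the latter being automatic, e.g. by Corollary \ref{th4.3}): this gives $\norm[\X]{x-S(h)x}\leq (1+e^{\omega h})\,h\cdot\frac{K(x,h)}{h}$, and since $\omega\geq 0$ and $h<\tau$ we have $e^{\omega h}\leq e^{\omega\tau}$, so
\[
\norm[\X]{x - S(h)x} \;\leq\; (1+e^{\omega\tau})\,\cN_\BFS^{\tau}(x)\,\frac{h}{\Norm[\BFS]{\chi_{(0,h)}}}.
\]
Finally, using $S(t+h) = S(t)S(h)$ and the Lipschitz continuity of $S(t)$ on $\overline{\dom{A}}$ with constant $e^{\omega t}\leq e^{\omega T}$,
\[
\norm[\X]{S(t+h)x - S(t)x} = \norm[\X]{S(t)(S(h)x) - S(t)x} \leq e^{\omega T}\,\norm[\X]{S(h)x - x},
\]
and combining the last two displays yields the claim.

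I do not expect a genuine obstacle in this proof; it is a short concatenation of Theorem \ref{th4.1} \ref{eq 4.1.3} and the semigroup/Lipschitz structure. The only points deserving a moment's care are: the monotonicity of $s\mapsto K(x,s)/s$, which is what makes the factorization through the ideal property possible; the positivity and finiteness of $\Norm[\BFS]{\chi_{(0,h)}}$, so that the inequality is meaningful; and the (automatic) membership $x\in\overline{\dom{A}}$, needed for $S(h)x$ to be defined and for Theorem \ref{th4.1} \ref{eq 4.1.3} to apply.
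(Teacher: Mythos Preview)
Your proof is correct and follows essentially the same route as the paper: both use the monotonicity of $s\mapsto K(x,s)/s$ together with the ideal property of $\BFS$ to obtain $K(x,h)\leq \cN_\BFS^{\tau}(x)\,h/\Norm[\BFS]{\chi_{(0,h)}}$, and then combine Theorem \ref{th4.1} \ref{eq 4.1.3} with the semigroup law and the Lipschitz bound $e^{\omega t}\leq e^{\omega T}$ on $S(t)$. Your additional remarks on the finiteness and positivity of $\Norm[\BFS]{\chi_{(0,h)}}$ and on $x\in\overline{\dom A}$ are fine clarifications but not substantive departures.
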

\begin{proof}
 For any $x\in \X$, the map $t\in (0,\infty) \to \frac{K(x,t)}{t}$ is decreasing. Thus, for every $s$, $h\in (0,\infty)$ with $s\leq h$ we have 
 \begin{equation*}{\label{eq4.5.1}}
\frac{K(x,h)}{h} \leq \frac{K(x,s)}{s}.
\end{equation*}
This inequality and the Banach ideal property of $\BFS$ imply that for every $x\in\X$ and every $h\in (0,\tau )$ 
\begin{align*}
 \frac{K(x,h)}{h} \Norm[\BFS]{\chi_{(0,h)}} & \leq \Norm[\BFS]{s \mapsto \frac{K(x,s)}{s} \chi_{(0,h)} (s) } \\
 & \leq \Norm[\BFS]{s \to \frac{K(x,s)}{s} \chi_{(0,\tau)}(s) }\\
 &= \cN_{\BFS}^{\tau}(x) . 
\end{align*}   
For every $x\in\dom{\cN_\BFS^{\tau}}$, the right hand side is finite. From  Theorem \ref{th4.1} \ref{eq 4.1.3} and the preceding inequality we have for every $t\in [0,T ]$, $h\in (0,\tau)$
\begin{align*}
\norm[\X]{S(t+h)x - S(t)x}  & = \Norm[\X]{S(t)S(h)x - S(t)x}\\
& \leq e^{\omega t } \Norm[\X]{S(h)x - x}\\
& \leq  e^{\omega t}(1+ e^{\omega h }) K(x,h)\\
& \leq  e^{\omega T}(1 + e^{\omega \tau})\, \cN_\BFS^{\tau} (x) \, \frac{h}{\Norm[\BFS]{\chi_{(0,h)}}}.   
\end{align*}

\end{proof}

\begin{remark}
If, in the previous theorem, the underlying Banach function space $\BFS$ is the polynomially weighted $L^p$ space
\[
\BFS_{\theta , p} = L^p (0,\infty ; t^{p(1-\theta) -1}\,\ud t) ,
\]
used in classical interpolation theory, then, for some constant $C\geq 0$ and every $h>0$,
\[
\Norm[\BFS_{\theta ,p}]{\chi_{(0,h)}} = C \, h^{1-\theta} .
\]
Therefore, if $x\in\dom{(\cN^A)_{\BFS_{\theta , p}}^\tau}$, then the semigroup orbit $S(\cdot )x$ is locally H\"older continuous of H\"older exponent $\theta$. 
\end{remark}

Let $A$ and $B$ be two operators on a Banach space $\X$.  We say $B$ is {\em dominated} by $A$ if there exist $a\in (0,1)$ and a continuous function $b: \mathbb{R}_{+} \to \mathbb{R}_{+}$ such that 
 \[ 
 |Bx| \leq a \, |Ax|+ b(\Norm[\X]{x}) \text{ for every } x\in \X.
 \]
 It follows from this inequality that if $B$ is dominated by $A$, then $\dom A \subseteq \dom B$.
 
 In the following theorem, we compare the interpolation function resulting from the interpolation between the norm on $\X$ and the set norm of the operator $A$, and the interpolation function resulting from the interpolation between the norm on $\X$ and the set norm of $A+B$ for an appropriate perturbation $B$. 
 
\begin{theorem}[Perturbation theorem] \label{thm.perturbation}
 Let $A$ be an $m$-accretive operator of type $\omega_A \geq 0$ on a Banach space $\X$. Let $B$ be a second operator on $\X$ such that $B$ is dominated by $A$ and that $A+B$ is $m$-accretive of some type $\omega_{A+B} \geq 0$. Let $\cN^A = (\norm[\X]{\cdot} , |A\cdot |)$ and $\cN^{A+B} = (\norm[\X]{\cdot} , |(A+B)\cdot |)$ be the interpolation couples associated with $A$ and $A+B$, respectively. Let $\BFS$ be a Banach function space over the interval $(0,\infty )$, and suppose that the Hardy operator is bounded on $\BFS$. Then, for every $\tau >0$ with $\tau \, \max \{ \omega_A , \omega_{A+B} \} <1$, there exists a constant $\tilde{a} \geq 0$ and an increasing function $\tilde{b} :\R_+ \to\R_+$ such that, for every $x\in\X$,
 \begin{equation} \label{4.6.0}
     (\cN^{A+B})^{\tau}_\BFS (x) \leq \tilde{a} \, (\cN^{A})^{\tau}_\BFS (x) + \tilde{b} (\norm[\X]{x} ) ,
 \end{equation}
 and in particular,
 \begin{equation}\label{4.6.1}
\dom (\cN^A)_{\BFS}^{\tau} \subseteq \dom (\cN^{A+B})_{\BFS}^{\tau} .
 \end{equation}
 Moreover, if $B$ is single valued, then
 \begin{equation}\label{4.6.2}
 \dom (\cN^A)_{\BFS}^{\tau} = \dom (\cN^{A+B})_{\BFS}^{\tau} .
 \end{equation}
\end{theorem}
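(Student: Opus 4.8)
The plan is to reduce everything to pointwise estimates on $(0,\tau)$ between the two $K$-functions $K^{A}(x,\cdot)$ and $K^{A+B}(x,\cdot)$, with an additive error depending only on $\Norm[\X]{x}$, and then to pass to $\BFS$-norms using only the ideal property of $\BFS$ together with $\chi_{(0,\tau)}\in L^{1}\cap L^{\infty}(0,\infty)\subseteq\BFS$. Since $B$ is dominated by $A$ we have $\dom{A}\subseteq\dom{B}$, hence $\dom{(A+B)}=\dom{A}$, and for every $v\in\dom{A}$,
\[
|(A+B)v|\leq|Av|+|Bv|\leq(1+a)\,|Av|+b(\Norm[\X]{v}).
\]
I would fix a point $v_{0}\in\dom{A}$, set $c_{0}:=\Norm[\X]{v_{0}}+\tau\,|Av_{0}|+\tau\,|(A+B)v_{0}|<\infty$, and let $\beta(r):=\sup_{0\leq s\leq r}b(s)$ be the finite increasing majorant of the continuous function $b$. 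The observation that tames the error term is that $t\mapsto K^{A}(x,t)$ and $t\mapsto K^{A+B}(x,t)$ are increasing, so any $v\in\dom{A}$ with $\Norm[\X]{x-v}+t\,|Av|\leq K^{A}(x,t)+1$ at some $t\in(0,\tau)$ (or the analogue with $A+B$) satisfies $\Norm[\X]{v}\leq 2\Norm[\X]{x}+c_{0}+1$, uniformly in $t$.

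For \eqref{4.6.1}, fix $x\in\X$ with $(\cN^{A})_{\BFS}^{\tau}(x)<\infty$ (otherwise \eqref{4.6.0} is trivial) and $t\in(0,\tau)$, and for $\eta\in(0,1)$ choose $v\in\dom{A}$ with $\Norm[\X]{x-v}+t\,|Av|\leq K^{A}(x,t)+\eta$. Using the displayed estimate for $|(A+B)v|$, the uniform bound $\Norm[\X]{v}\leq 2\Norm[\X]{x}+c_{0}+1$, and the fact that both $\Norm[\X]{x-v}$ and $t\,|Av|$ are $\leq K^{A}(x,t)+\eta$, one obtains $K^{A+B}(x,t)\leq(1+a)\bigl(K^{A}(x,t)+\eta\bigr)+t\,\beta(2\Norm[\X]{x}+c_{0}+1)$; letting $\eta\to0$ and dividing by $t$,
\[
\frac{K^{A+B}(x,t)}{t}\leq(1+a)\,\frac{K^{A}(x,t)}{t}+\beta(2\Norm[\X]{x}+c_{0}+1)\qquad(t\in(0,\tau)).
\]
Multiplying by $\chi_{(0,\tau)}$ and taking $\BFS$-norms gives \eqref{4.6.0} with $\tilde a=1+a$ and $\tilde b(r)=\beta(2r+c_{0}+1)\,\Norm[\BFS]{\chi_{(0,\tau)}}$, which is increasing and finite valued, and \eqref{4.6.1} is then immediate.

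For \eqref{4.6.2} one must prove the reverse inclusion under the extra hypothesis that $B$ is single valued. The extra ingredient is that then, for $v\in\dom{A}$, $Bv$ is a single vector and $|Av|\leq|(A+B)v|+|Bv|$; combined with the domination this yields $(1-a)\,|Av|\leq|(A+B)v|+b(\Norm[\X]{v})$. Running the argument above with the roles of $A$ and $A+B$ interchanged — now taking $v\in\dom{(A+B)}=\dom{A}$ with $\Norm[\X]{x-v}+t\,|(A+B)v|\leq K^{A+B}(x,t)+\eta$ and absorbing the $a\,|Av|$-term to the left — produces $(\cN^{A})_{\BFS}^{\tau}(x)\leq\tfrac{2-a}{1-a}\,(\cN^{A+B})_{\BFS}^{\tau}(x)+\tilde b'(\Norm[\X]{x})$ for a suitable increasing $\tilde b'$, whence $\dom{(\cN^{A+B})_{\BFS}^{\tau}}\subseteq\dom{(\cN^{A})_{\BFS}^{\tau}}$; with \eqref{4.6.1} this gives \eqref{4.6.2}.

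The step I expect to be the main obstacle is not any single computation but the bookkeeping that keeps the additive error a function of $\Norm[\X]{x}$ alone; this is precisely why I would work directly with the $K$-functional and the explicit $t$-uniform bound $\Norm[\X]{v}\leq 2\Norm[\X]{x}+c_{0}+1$ on near-optimal competitors, rather than quoting Theorem \ref{lem.mean1} with an unspecified approximating curve. The second subtle point, needed only for the reverse inclusion, is the inequality $|Av|\leq|(A+B)v|+|Bv|$, which fails for truly multivalued $B$ because one has no control on the $B$-component of a competitor that is near-optimal for $|(A+B)v|$; this is exactly what forces the single-valuedness hypothesis in \eqref{4.6.2}. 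I note, finally, that this argument uses neither the restriction $\tau\max\{\omega_{A},\omega_{A+B}\}<1$ nor the boundedness of the Hardy operator on $\BFS$; these are kept only because they are the standing hypotheses under which the interpolation functions $(\cN^{A})_{\BFS}^{\tau}$ and $(\cN^{A+B})_{\BFS}^{\tau}$ were analysed in Theorem \ref{th4.1} and its corollaries.
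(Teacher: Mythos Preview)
Your proof is correct and takes a genuinely different route from the paper's. The paper estimates $K^{A+B}(x,t)$ by testing with the specific competitor $v=J_t^A x$, then uses the Lipschitz estimate $\|J_t^A x\|\le \frac{1}{1-\tau\omega_A}\|x\|+C$ to control $b(\|v\|)$, and finally invokes Corollary~\ref{th4.2}\ref{th4.2.a} to convert the resolvent quantity $\|\,t\mapsto\frac{\|x-J_t^A x\|}{t}\chi_{(0,\tau)}\,\|_\BFS$ back into $(\cN^A)_\BFS^\tau(x)$. This last step is where the boundedness of the Hardy operator and the condition $\tau\omega_A<1$ enter. For the reverse inclusion the paper shows that $-B$ is dominated by $A+B$ (with constant $\frac{a}{1-a}$, possibly $\ge 1$) and reruns the same resolvent argument with $A+B$ in place of $A$.

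You instead compare the $K$-functions directly: a near-optimal competitor $v$ for $K^A(x,t)$ automatically satisfies $\|v\|\le 2\|x\|+c_0+1$ via the elementary bound $K^A(x,t)\le\|x-v_0\|+\tau|Av_0|$, so the additive error $b(\|v\|)$ becomes a function of $\|x\|$ alone without any resolvent estimate. Your pointwise inequality $K^{A+B}(x,t)/t\le(1+a)K^A(x,t)/t+\beta(2\|x\|+c_0+1)$ then passes to $\BFS$-norms using only the ideal property and $\chi_{(0,\tau)}\in\BFS$. As you observe, this bypasses both the Hardy-operator hypothesis and the restriction $\tau\max\{\omega_A,\omega_{A+B}\}<1$, so your argument is in fact slightly stronger than the stated theorem. (A minor point: in the reverse estimate the constant can be sharpened from $\frac{2-a}{1-a}$ to $\frac{1}{1-a}$, but this does not affect the conclusion.)
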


The proof shows that the constant $\tilde{a}$ and the function $\tilde{b}$ in the above statement depend on the constant $a$ and the function $b$ from the domination condition, the type $\omega_A$ of the operator $A$, the constant $\tau$, the norm $\Norm[\BFS]{\chi_{(0,\tau )}}$ of the characteristic function in the space $\BFS$, and the supremum norm of $t\mapsto \norm[\X]{J_t^A x_0}$ over the interval $(0,\tau )$ for some arbitrary, but fixed element $x_0\in\X$. 

\begin{proof}
By assumption, there exist a constant $a\in (0,1)$ and an increasing function $b:\mathbb{R}_{+} \to \mathbb{R}_{+}$ such that 
\[ 
|Bv| \leq a\, |Av| + b(\Norm[\X]{v}) \text{ for every } v\in \dom A. 
\]
Notice that for the first part of the proof, we only need the constant $a$ to be positive.

Let $x\in\X$. We denote by $K^{A+B} (x,\cdot )$ the $K$-function associated to the interpolation couple $\cN^{A+B}$. For every $t>0$ and every $v\in\dom{(A+B)} = \dom{A}$ we have
\begin{equation} \label{eq.kab}
K^{A+B} (x,t) \leq \norm[\X]{x-v} + t \, |(A+B) v| .
\end{equation}
For every $v\in \dom{(A+B)} = \dom{A}$, by the domination assumption,
\begin{align*}
|(A+B)v| & = \inf\{\Norm[\X]{f_1 +f_2} \st f_1\in Av, f_2 \in Bv \} \\
& \leq \inf\{ \Norm[\X]{f_1} \st f_1\in Av\} + \inf\{ \Norm[\X]{f_2}\st f_2 \in Bv\} \\
& = |Av| + |Bv| \\
& \leq (a+1)\, |Av| + b (\norm[\X]{v}) .
\end{align*}
We use this inequality in \eqref{eq.kab} with the special element $v=J_t^A x$ (where $J_t^A$ is the resolvent of $A$!). It follows that for every $t\in (0,\tau )$
\begin{equation} \label{eq.kab2}
\begin{split}
\frac{K^{A+B} (x,t)}{t} & \leq \frac{\norm[\X]{x-J_t^Ax}}{t} + (a+1) \, |AJ_t^A x| + b(\norm[\X]{J_t^A x} ) \\
& \leq (a+2) \, \frac{\norm[\X]{x-J_t^A x}}{t} + b(\norm[\X]{J_t^A x} ) .
\end{split}
\end{equation}
Fix now an arbitrary element $x_0\in\X$, independently of $x$. Then, for every $t\in (0,\tau )$,
\begin{align*}
\norm[\X]{J_t^A x} & \leq \norm[\X]{J_t^A x - J_t^A x_0} + \norm[\X]{J_t^A x_0} \\
& \leq \frac{1}{1-t\omega} \norm[\X]{x-x_0} + \norm[\X]{J_t^A x_0} \\
& \leq \frac{1}{1-\tau\omega} \norm[\X]{x} + C ,
\end{align*}
where $C\geq 0$ is a constant depending only on $x_0$, $\tau\omega$ and the supremum of $\norm[\X]{J_t^A x_0}$ over $t\in (0,\tau )$, but which is independent of $x$. Seeing the inequality \eqref{eq.kab2} as an inequality of functions of $t\in (0,\tau )$, extending all functions by $0$ outside this interval, and taking the $\BFS$-norm on both sides yields
\[
(\cN^{A+B})^{\tau}_\BFS (x) \leq \Norm[\BFS]{t\mapsto \frac{\norm[\X]{x-J_t^A x}}{t} \, \chi_{(0,\tau )} (t) } + b( \frac{1}{1-\tau\omega} \norm[\X]{x} + C ) \, \Norm[\BFS]{\mathbf{\chi_{(0,\tau )}}} ,
\]
where $(\cN^{A+B})^{\tau}_\BFS$ is the "finite" interpolation function of $\cN^{A+B}$ with respect to the $K$-method. By Corollary \ref{th4.2}, the resulting first term on the right hand side is comparable to (dominated by) $(\cN^A)_\BFS^{\tau} (x)$ with a constant independent of $x$. Define the second term on the right hand side to be $\tilde{b} (\norm[\X]{x} )$, and the proof of the inequality \eqref{4.6.0} is complete. The inclusion \eqref{4.6.1} is a direct consequence. 

If $B$ is single valued, then for every $v\in\X$
\begin{align*}
|Av| &= |Av + Bv -Bv|\\
& \leq |Av + Bv| + |Bv|\\
& \leq |Av + Bv| + a\, |Av| +b (\Norm[\X]{v}) .
\end{align*}
Therefore, for every $v\in\X$,
\begin{align*}
|-Bv| & \leq a\, |Av| + b(\norm[\X]{v}) \\
 & \leq \frac{a}{1-a}  |(A+B)v| + \frac{2}{1-a} \, b(\Norm[\X]{v}) ,
\end{align*}
that is, $-B$ is dominated by $A+B$, with the exception that the constant $\frac{a}{1-a}$ appearing in the preceding inequality is now greater than $1$. However, in the first part of the proof, we have not used the assumption that this constant belongs to $(0,1)$. It follows that $-B$ is (up to the change of this small assumption on the constant) dominated by $A+B$. By repeating the above proof with $A$ replaced by $A+B$, and with $B$ replaced by $-B$, we find an inequality of the type
\[
(\cN^{A})^{\tau}_\BFS (x) \leq a' \, (\cN^{A+B})^{\tau}_\BFS (x) + b' (\norm[\X]{x} ) \quad (x\in\X ),
\]
and this yields the converse inclusion 
\[
\dom{(\cN^{A+B})^{\tau}_\BFS} \subseteq \dom{(\cN^{A})^{\tau}_\BFS} 
\]
needed for the proof of \eqref{4.6.2}.
\end{proof}

\section{Regularizing semigroups}

Let $A \subseteq X \times X$ be an $m$-accretive operator of type $\omega\in\R$ on a Banach space $\X$, and let $S$ be the semigroup generated by $-A$. This semigroup is said to be a {\em regularizing semigroup} if 
\begin{aufzi}
    \item for each $t>0$, $S(t)$ maps $\overline{\dom A}$ into $\dom{A}$, and
    \item there exist $C\geq 0$ and $\tau >0$ such that, for each $t\in (0,\tau)$ and $x\in \overline{\dom A}$,
    \begin{equation*}
        \vert {AS(t)x} \vert \leq C\,\frac{K(x,t)}{t}.
    \end{equation*}
\end{aufzi}

\begin{example}
(a) If $S$ is a {\em linear} $C_0$-semigroup, then $S$ is regularizing if and only if it is an analytic semigroup. This has been remarked in  \cite{Bre76} and \cite{Df81}, but without proof. The proof of this remark might be well known, but we are unable to find a reference, and for the reader's convenience, we give here a short proof.

\begin{proof}[Proof of this statement]
By \cite[Theorem 5.2, Chapter 2]{Pa83}, a $C_0$-semigroup $S$ with generator $-A$ is analytic if and only if, for some / every $\tau >0$ and for some finite $C\geq 0$ (depending possibly on $\tau$),
    \begin{equation} {\label{eq5.1}}
        \sup_{t\in (0,\tau)} \norm[\mathcal{L}(X)]{tAS(t)} \leq C .
    \end{equation}

Assume first that $S$ is a regularizing $C_0$-semigroup. Then there exists $C\geq 0$ and $\tau >0$ such that, for each $t\in (0,\tau)$ and $x\in \X$,
    \begin{align*}
    \norm[\X]{tAS(t)x}  & \leq C\, K(x,t) \leq C\, \norm[\X]{x} ;
    \end{align*}
    The second inequality follows from \eqref{lineark} by putting $x_0 = x $ and $x_1 = 0$. 
Hence $S$ is analytic.

For the converse, assume that $S$ is an analytic semigroup, that is, \eqref{eq5.1} holds for some / every $\tau >0$ and for some $C\geq 0$. Then, for every $x\in\X$, $v\in \dom{A}$ and for every $t\in (0,\tau )$,
    \begin{align*}
        \norm[\X]{tAS(t)x} & \leq \norm[\X]{tAS(t)(x-v)} + \norm[\X]{tAS(t)v} \\
        & \leq C \,\norm[\X]{x-v} + e^{\omega\tau} \, t\norm[\X]{Av}\\
        & \leq \max \{ C, e^{\omega\tau} \} \, ( \norm[\X]{x-v} + t\norm[\X]{Av})
    \end{align*}
    Taking the infimum on the right hand side over all $v\in \dom{A}$ and recalling the definition of the $K$-functional yields that the semigroup $S$ is regularizing.
\end{proof}

(b) Every semigroup generated by the negative subgradient of a lower semicontinuous, semiconvex, proper function $\mathcal{E}$ on a Hilbert space is regularizing by \cite[Th\'eor\`eme 3.2]{Br73}.
\end{example}

For regularizing semigroups on Banach spaces we have the following additional equivalence between interpolation functions and the resulting characterisation of their domains.

\begin{theorem}{\label{th5.3}}
 Let $A \subseteq X \times X$ be an $m$-accretive operator of type $\omega\in\R$ on a Banach space $\X$, and let $S$ be the semigroup generated by $-A$. Assume that this semigroup is regularizing. Let $\cN^A = (\norm[\X]{\cdot} , \vert A\cdot \vert )$ be the interpolation couple associated with $A$. Let $\BFS$ be a Banach function space such that the Hardy operator is bounded on $\BFS$. Then, for every $\tau >0$ and for every $x\in\overline{\dom{A}}$,  
\begin{equation} {\label{eq5.3}}
 \frac{1}{e^{\omega\tau}\, \norm[\mathcal{L}(\BFS)]{P} +1}\, (\cN^A)_{\BFS}^{\tau}(x) \leq \norm[\BFS]{t \mapsto \vert AS(t)x \vert\,\chi_{(0,\tau)}(t)} \leq     (\cN^A)_{\BFS}^{\tau}(x) ,
\end{equation}
 and in particular,
 \begin{equation*}
     \dom{(\cN^A)_{\BFS}^{\tau}} = \{x\in \overline{\dom{A}} \st t \mapsto \vert AS(t)x \vert\,\chi_{(0,\tau)}(t) \in \BFS \}
 \end{equation*}
\end{theorem}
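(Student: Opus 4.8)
The plan is to deduce the two inequalities in \eqref{eq5.3} from two pointwise estimates relating $t\mapsto K(x,t)/t$ and $t\mapsto|AS(t)x|$ on $(0,\tau)$, and then to pass to $\BFS$-norms using only the Banach ideal property of $\BFS$ and the boundedness of the Hardy operator $P$ on $\BFS$; the description of $\dom{(\cN^A)_\BFS^\tau}$ is then read off from \eqref{eq5.3}. The one structural fact I would use repeatedly is that, $S$ being regularizing, $S(s)x\in\dom A$ for every $s>0$ and every $x\in\overline{\dom A}$; hence $s\mapsto S(s)x$ is locally Lipschitz on $(0,\infty)$ with $|AS(s)x|<\infty$ there, and the local Lipschitz estimate for orbits ($\Norm[\X]{S(r_2)y-S(r_1)y}\le e^{\omega r_2}|r_2-r_1|\,|Ay|$ for $y\in\dom A$, $r_1\le r_2$) applied to the orbit of $S(\cdot)\,(S(s)x)$ gives both $\limsup_{h\to0^+}\Norm[\X]{S(s+h)x-S(s)x}/h\le|AS(s)x|$ and $|AS(r)y|\le e^{\omega r}|Ay|$ for $y\in\dom A$. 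The right‑hand (easy) inequality of \eqref{eq5.3} is then, pointwise on $(0,\tau)$, essentially condition (b) in the definition of a regularizing semigroup, $|AS(t)x|\le K(x,t)/t$ (with the normalizing constant $1$; a general $C$ would simply propagate, and the extension from the interval furnished by the definition to an arbitrary $(0,\tau)$ is routine, using the semigroup law together with the fact that $r\mapsto e^{-\omega r}|AS(r)x|$ and $t\mapsto K(x,t)/t$ are non‑increasing); multiplying by $\chi_{(0,\tau)}$ and using the monotonicity of $\Norm[\BFS]{\cdot}$ gives $\Norm[\BFS]{t\mapsto|AS(t)x|\chi_{(0,\tau)}(t)}\le(\cN^A)_\BFS^\tau(x)$.

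The content is the left‑hand inequality. First I would feed the competitor $v=S(t)x\in\dom A$ into the $K$-functional to get, for every $t>0$,
\[
 \frac{K(x,t)}{t}\ \le\ \frac{\Norm[\X]{x-S(t)x}}{t}\ +\ |AS(t)x| .
\]
Then I would control $\Norm[\X]{x-S(t)x}$ by an integral of the speed of the orbit: for $0<s_1<s_2\le t$, the Lipschitz estimate applied to the orbit of $S(\cdot)\,(S(s_1)x)$ gives $\Norm[\X]{S(s_2)x-S(s_1)x}\le e^{\omega(s_2-s_1)}(s_2-s_1)\,|AS(s_1)x|$; summing over a partition of $[\delta,t]$, letting the mesh tend to $0$ and then $\delta\downarrow0$ (using continuity of $S(\cdot)x$ at $0$ and monotone convergence, $|AS(\cdot)x|$ being locally bounded on $(0,\infty)$) yields $\Norm[\X]{x-S(t)x}\le e^{\omega t}\int_0^t|AS(s)x|\,\ud s$. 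Writing $g(s):=|AS(s)x|$, the two displays combine to $\frac{K(x,t)}{t}\le e^{\omega\tau}\,(Pg)(t)+g(t)$ for all $t\in(0,\tau)$.

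It remains to take $\BFS$-norms. There is nothing to prove unless $t\mapsto g(t)\chi_{(0,\tau)}(t)\in\BFS$, so assume this. Since $(Pg)(t)=\frac1t\int_0^t g(s)\,\ud s$ depends only on $g|_{(0,t)}$, one has $(Pg)(t)\chi_{(0,\tau)}(t)\le\big(P(g\,\chi_{(0,\tau)})\big)(t)$ pointwise; hence by the ideal property and the boundedness of $P$ on $\BFS$, $\Norm[\BFS]{t\mapsto(Pg)(t)\chi_{(0,\tau)}(t)}\le\Norm[\cL(\BFS)]{P}\,\Norm[\BFS]{t\mapsto g(t)\chi_{(0,\tau)}(t)}$. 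Multiplying the last estimate of the previous paragraph by $\chi_{(0,\tau)}$, taking $\BFS$-norms and using the triangle inequality then gives
\[
 (\cN^A)_\BFS^\tau(x)=\Norm[\BFS]{t\mapsto\tfrac{K(x,t)}{t}\chi_{(0,\tau)}(t)}\ \le\ \big(e^{\omega\tau}\Norm[\cL(\BFS)]{P}+1\big)\,\Norm[\BFS]{t\mapsto|AS(t)x|\chi_{(0,\tau)}(t)},
\]
which is the left‑hand inequality of \eqref{eq5.3}. The domain identity is then immediate: for $x\in\overline{\dom A}$, \eqref{eq5.3} shows $(\cN^A)_\BFS^\tau(x)<\infty$ precisely when $t\mapsto|AS(t)x|\chi_{(0,\tau)}(t)\in\BFS$, while for $x\notin\overline{\dom A}$ one has $(\cN^A)_\BFS^\tau(x)=\infty$ (as in Corollary \ref{th4.3}, since then $K(x,t)\ge{\rm dist}(x,\dom A)>0$ is not locally integrable at $0$).

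I expect the main obstacle to be the estimate $\Norm[\X]{x-S(t)x}\le e^{\omega t}\int_0^t|AS(s)x|\,\ud s$: because $x$ need only lie in $\overline{\dom A}$, the orbit $S(\cdot)x$ is merely continuous at $t=0$ and $|AS(\cdot)x|$ may blow up as $s\downarrow0$, so the integral estimate has to be run on $[\delta,t]$ and the limit $\delta\downarrow0$ taken with care — it is precisely regularizing hypothesis (a) that makes $S(\cdot)x$ locally Lipschitz on each $[\delta,t]$ so that the orbit estimate can be invoked, and hypothesis (b) that supplies the one‑sided pointwise bound. A second, minor technical point is the localization $(Pg)(t)\chi_{(0,\tau)}(t)\le P(g\,\chi_{(0,\tau)})(t)$, which is what keeps the whole computation inside the finite interval $(0,\tau)$ and produces the constant $\Norm[\cL(\BFS)]{P}$ rather than a larger one coming from the tail of $g$.
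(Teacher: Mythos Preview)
Your proof is correct and follows essentially the same route as the paper: both establish the right-hand inequality of \eqref{eq5.3} directly from condition (b) in the definition of a regularizing semigroup, and both prove the left-hand inequality by feeding the competitor $v=S(t)x$ into the $K$-functional and controlling $\Norm[\X]{x-S(t)x}$ by $e^{\omega\tau}\int_0^t|AS(s)x|\,\ud s$, then applying the Hardy operator bound on $\BFS$. The only cosmetic difference is that the paper packages the integral estimate via the variation function (writing $\frac{\ud}{\ud t}\Var_{S(\cdot)x}(t)\le e^{\omega\tau}|AS(t)x|$ and then $\Norm[\X]{x-S(t)x}\le\Var_{S(\cdot)x}(t)=\int_0^t\frac{\ud}{\ud s}\Var_{S(\cdot)x}(s)\,\ud s$), whereas you obtain the same bound by a direct partition-and-limit argument on $[\delta,t]$; your version is arguably more elementary but the content is identical.
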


\begin{proof}
    The second inequality of \eqref{eq5.3} follows from the definition of a regularizing semigroup, from the definition of the interpolation function $(\cN^A)_{\BFS}^{\tau}$ and from the ideal property of $\BFS$.
    
    In order to show the first inequality in \eqref{eq5.3}, let $x\in\overline{\dom{A}}$. By definition of the $K$-function and since $S(t)x\in\dom{A}$ for $t>0$,
    \begin{equation} {\label{eq5.5}}
        \frac{K(x,t)}{t} \leq \frac{\norm[\X]{x-S(t)x}}{t} + \vert AS(t)x \vert. 
    \end{equation}
    Moreover, since $S(s)x\in\dom{A}$ for every $s>0$, the function $[0,\tau ] \to \X$, $t \mapsto S(t)S(s)x$ is Lipschitz continuous with Lipschitz constant $e^{\omega\tau}\, \vert AS(s)x \vert$. In other words, for every $s$, $t_1$, $t_2 \in (0,\tau )$ with $t_1 \neq t_2$ we have
    \begin{equation*}
         \frac{\norm[\X]{S(t_1+s)x - S(t_2+s)x}}{|t_1-t_2|} \leq e^{\omega\tau}\,\vert AS(s)x \vert ,
    \end{equation*}
    which implies for all $t\in (0,\tau)$ that
    \begin{equation*}
        \frac{\ud}{\ud t}\Var_{S(\cdot)x}(t) \leq e^{\omega\tau}\, \vert AS(t)x \vert .
    \end{equation*}
    Hence, for every $t\in (0,\tau )$, 
    \begin{equation*}
        \frac{\norm[\X]{x- S(t)x}}{t} \leq \frac{1}{t}\int_0^t \frac{\ud}{\ud s}\Var_{S(\cdot)x}(s)\,\ud s \leq e^{\omega\tau}\, \frac{1}{t} \int_0^{t} \vert AS(s)x \vert \,\ud s .
    \end{equation*}
    from where it folllows that
    \begin{align*} {\label{eq5.4}}
 \norm[\BFS]{t \mapsto \frac{\norm[\X]{x-S(t)x}}{ t}\,\chi_{(0,\tau)}(t)} & \leq e^{\omega\tau}\, \norm[\BFS]{ t \mapsto \frac{1}{t} \int_0^{t} \vert AS(s)x \vert \,\ud s \,\chi_{(0,\tau)}(t) } \nonumber\\
 &\leq e^{\omega\tau}\, \norm[\mathcal{L}(\BFS)]{P}  \norm[\BFS]{t \mapsto \vert AS(t)x \vert\,\chi_{(0,\tau)}(t)}
    \end{align*}
  Combining this inequality with \eqref{eq5.5} yields
    \begin{equation*}
        (\cN^A)_{\BFS}^{\tau}(x) \leq    (e^{\omega\tau}\, \norm[\mathcal{L}(\BFS)]{P} +1)  \norm[\BFS]{t \mapsto \vert AS(t)x \vert\,\chi_{(0,\tau)}(t)} ,
    \end{equation*}
 which gives the first inequality in \eqref{eq5.3}. The equality of the effective domains is a direct consequence of \eqref{eq5.3}. 
\end{proof}

\section{Interpolation associated with subgradients}

Let $H$ be a Hilbert space with inner product $\langle\cdot ,\cdot\rangle_H$, and let $\mathcal{E}:H\to [0,\infty]$ be a proper, lower semicontinuous and convex function. Then the {\em subgradient}
\begin{equation}
\partial\mathcal{E} := \{ (x,f)\in H\times H \st x\in\dom{\mathcal{E}} \text{ and for all } v\in H : \mathcal{E} (v) - \mathcal{E} (x) \geq \langle f,v-x\rangle_H \}    
\end{equation}
is an $m$-accretive operator on $H$ \cite[Exemple 2.3.4, p. 25]{Bre73}, and $\overline{\dom{\partial\mathcal{E}}} = \overline{\dom{\mathcal{E}}}$ \cite[Proposition 2.11, p. 39]{Bre73}. In this example, we estimate the interpolation function associated to the interpolation couple $\cN^{\partial\mathcal{E}} = (\norm[H]{\cdot} , |\partial\mathcal{E} (\cdot )|)$, and we identify the effective domain $\dom{(\cN^{\partial\mathcal{E}})_\BFS^\tau }$ at least for reasonable Banach function spaces $\BFS$ between $L^1 (0,\infty )$ and $L^2 (0,\infty )$. 

Given a Banach function space $\BFS$ over $(0,\infty )$, we define the space $\BFS^2$ by
\[
\BFS^2 := \{ g \in L^0 (0,\infty ) \st t\mapsto \frac{g(\sqrt{t})}{\sqrt{t}} \in \BFS \} ,
\]
which becomes a Banach space for the natural norm
\[
\Norm[\BFS^2]{g} := \Norm[\BFS]{t\mapsto \frac{g(\sqrt{t})}{\sqrt{t}}}
\]
For example, if $\BFS_{\theta ,p} = L^p (0,\infty ; t^{p(1-\theta)-1}\ud t)$ is a polynomially weighted $L^p$ space, then $\BFS_{\theta ,p}^2 = \BFS_{2\theta ,p}$ is again a polynomially weighted $L^p$ space. The latter space is a Banach space of measurable functions, and it is a Banach function space in the sense of our definition if and only if $\theta\in (0,\frac12 )$. 

In the following theorem, we assume that the dilation operator $D_2$ given by $(D_2 f) (t) := f(2t)$ is bounded on $\BFS$. The dilation operator $D_2$ is bounded on the polynomially weighted $L^p$ spaces $\BFS_{\theta ,p}$ and $\Norm[\mathcal{L} (\BFS_{\theta ,p} )]{D_2} = \frac{1}{2^{1-\theta}}$. Note that the norm of the dilation operator on $\BFS_{\theta ,p}$ is strictly less than $\frac{1}{\sqrt{2}}$ if $\theta\in (0,\frac12 )$.
 
 \begin{theorem}{\label{Th p1}}
 Let  $\mathcal{E}:H\to [0,\infty]$ be a proper, lower semicontinuous and convex function, and let $\partial\mathcal{E}$ be its subgradient. Consider the interpolation couples of functions $\cN^{\partial\mathcal{E}} = (\norm[H]{\cdot} , |\partial\mathcal{E} (\cdot )|)$ and $\cN^{\sqrt{\mathcal{E}}} = (\norm[H]{\cdot} , \sqrt{\mathcal{E}} )$. Let $\BFS$ be a Banach function space over $(0,\infty )$ be such that $\BFS^2$ is a Banach function space, too, and such that the dilation operator $D_2$ is bounded on $\BFS$ and that $\gamma := \sqrt{2} \Norm[\mathcal{L} (\BFS )]{D_2} < 1$. Then, for every $\tau >0$ and for every $x\in H$,  
 \begin{multline} {\label{eq p.1}}
 \frac{1}{\sqrt{2}}\, \Norm[\BFS]{t \mapsto \frac{\Norm[H]{x - J_tx}}{t} \, \chi_{(0,\frac{\tau^2}{2})} (t)} \leq (\cN^{\sqrt{\mathcal{E}}})_{\BFS^2}^\tau (x) \\
   \leq  \frac{1}{1-\gamma} \, \Norm[\BFS]{t \mapsto \frac{\Norm[H]{x - J_tx}}{t} \, \chi_{(0,2\tau^2)} (t) }  
 +  \frac{\gamma}{1-\gamma} \, \Norm[\BFS]{t\mapsto \sqrt{\frac{\mathcal{E}(J_tx)}{t}} \, \chi_{(\tau^2,2\tau^2 )} (t)} .
\end{multline} 
In particular,
\[
\dom{(\cN^{\partial\mathcal{E}})_\BFS^\tau } = \dom{(\cN^{\sqrt{\mathcal{E}}} )_{\BFS^2}^\tau} .
\]
 \end{theorem}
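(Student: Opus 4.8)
The plan is to prove the two-sided estimate \eqref{eq p.1}; the equality of effective domains then drops out together with Theorem~\ref{th4.1}. Write $J_\lambda:=(I+\lambda\partial\mathcal{E})^{-1}$ for the resolvent of $\partial\mathcal{E}$ (a proximal map, as $\partial\mathcal{E}$ is $m$-accretive of type $0$ on $H$), and recall the Moreau--Yosida regularisation
\[
\mathcal{E}_\lambda(x):=\min_{v\in H}\Bigl(\mathcal{E}(v)+\tfrac1{2\lambda}\norm[H]{x-v}^2\Bigr)=\mathcal{E}(J_\lambda x)+\tfrac1{2\lambda}\norm[H]{x-J_\lambda x}^2 ,
\]
which is finite valued, non-increasing in $\lambda$, and satisfies $\mathcal{E}(J_\lambda x)\le\mathcal{E}_\lambda(x)$ and $\tfrac1{2\lambda}\norm[H]{x-J_\lambda x}^2\le\mathcal{E}_\lambda(x)$. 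A purely notational first step unravels $\BFS^2$: by the definitions of $\BFS^2$ and of the finite interpolation function, $(\cN^{\sqrt{\mathcal{E}}})_{\BFS^2}^\tau(x)=\Norm[\BFS]{t\mapsto\frac{K^{\sqrt{\mathcal{E}}}(x,\sqrt t)}{t}\,\chi_{(0,\tau^2)}(t)}$, where $K^{\sqrt{\mathcal{E}}}(x,s)=\inf_{v\in H}\bigl(\norm[H]{x-v}+s\sqrt{\mathcal{E}(v)}\bigr)$; everything now takes place in $\BFS$ over the variable $t$. For the lower bound in \eqref{eq p.1} I would use that $J_tx$ minimises $v\mapsto\mathcal{E}(v)+\tfrac1{2t}\norm[H]{x-v}^2$, so that $\norm[H]{x-J_tx}^2\le\norm[H]{x-v}^2+2t\,\mathcal{E}(v)\le 2\bigl(\norm[H]{x-v}^2+t\,\mathcal{E}(v)\bigr)\le 2\bigl(\norm[H]{x-v}+\sqrt t\,\sqrt{\mathcal{E}(v)}\bigr)^2$ for every $v$; taking the infimum gives $\tfrac1{\sqrt2}\norm[H]{x-J_tx}\le K^{\sqrt{\mathcal{E}}}(x,\sqrt t)$, and dividing by $t$, using the ideal property of $\BFS$ and $\chi_{(0,\tau^2/2)}\le\chi_{(0,\tau^2)}$, yields the first inequality.

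For the upper bound I would test the infimum defining $K^{\sqrt{\mathcal{E}}}(x,\sqrt t)$ against the element $v=J_{2t}x$; dividing by $t$ and setting $\phi(r):=\frac{\norm[H]{x-J_rx}}{r}$, $\psi(r):=\sqrt{\mathcal{E}(J_rx)/r}$, this gives
\[
\frac{K^{\sqrt{\mathcal{E}}}(x,\sqrt t)}{t}\le\frac{\norm[H]{x-J_{2t}x}}{t}+\sqrt{\frac{\mathcal{E}(J_{2t}x)}{t}}=2\,\phi(2t)+\sqrt2\,\psi(2t) .
\]
Rescaling the resolvent parameter to $2t$ is precisely what brings in the \emph{contracting} dilation $D_2$ and the enlarged interval $(0,2\tau^2)$: multiplying by $\chi_{(0,\tau^2)}$ and taking the $\BFS$-norm, the right-hand side equals $2\,\Norm[\BFS]{D_2\bigl(\phi\,\chi_{(0,2\tau^2)}\bigr)}+\sqrt2\,\Norm[\BFS]{D_2\bigl(\psi\,\chi_{(0,2\tau^2)}\bigr)}$, controlled by the boundedness of $D_2$ and by $\gamma=\sqrt2\,\Norm[\mathcal{L}(\BFS)]{D_2}$. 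The key point is that the ``energy'' term $\psi$, which has no counterpart in the couple $\cN^{\partial\mathcal{E}}$, is bounded by the quantity we are estimating: squaring before taking the infimum gives $K^{\sqrt{\mathcal{E}}}(x,\sqrt r)^2\ge r\,\mathcal{E}_{r/2}(x)$, hence $\psi(r)^2=\frac{\mathcal{E}(J_rx)}{r}\le\frac{\mathcal{E}_r(x)}{r}\le\frac{\mathcal{E}_{r/2}(x)}{r}\le\frac{K^{\sqrt{\mathcal{E}}}(x,\sqrt r)^2}{r^2}$, i.e.\ $\psi(r)\le\frac{K^{\sqrt{\mathcal{E}}}(x,\sqrt r)}{r}$ for every $r>0$. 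Splitting $\chi_{(0,2\tau^2)}=\chi_{(0,\tau^2)}+\chi_{(\tau^2,2\tau^2)}$: on $(0,\tau^2)$ this feeds back a multiple of $(\cN^{\sqrt{\mathcal{E}}})_{\BFS^2}^\tau(x)$ (here the hypothesis that $\BFS^2$ be a Banach function space ensures $t\mapsto t^{-1/2}\chi_{(0,a)}(t)\in\BFS$, so the bound remains meaningful even when $\inf\mathcal{E}>0$), while on $(\tau^2,2\tau^2)$ the function $\psi$ is bounded, since $\mathcal{E}(J_rx)\le\mathcal{E}_r(x)\le\mathcal{E}_{2\tau^2}(x)<\infty$, so $\psi\,\chi_{(\tau^2,2\tau^2)}\in L^1\cap L^\infty(0,\infty)\subseteq\BFS$. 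One thus arrives at an inequality of the shape $(\cN^{\sqrt{\mathcal{E}}})_{\BFS^2}^\tau(x)\le c_1\,\Norm[\BFS]{\phi\,\chi_{(0,2\tau^2)}}+\gamma\,(\cN^{\sqrt{\mathcal{E}}})_{\BFS^2}^\tau(x)+c_2\,\Norm[\BFS]{\psi\,\chi_{(\tau^2,2\tau^2)}}$ with $c_1,c_2$ depending only on $\Norm[\mathcal{L}(\BFS)]{D_2}$; since $\gamma<1$, solving for $(\cN^{\sqrt{\mathcal{E}}})_{\BFS^2}^\tau(x)$ (when it is finite; otherwise already $\Norm[\BFS]{\phi\,\chi_{(0,2\tau^2)}}=\infty$ by the lower bound) produces the right-hand side of \eqref{eq p.1}.

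To deduce the equality of effective domains, note that Theorem~\ref{th4.1}~\ref{eq 4.1.1} with $\omega=0$ gives $\tfrac12\,\tfrac{K^{\partial\mathcal{E}}(x,t)}{t}\le\tfrac{\norm[H]{x-J_tx}}{t}\le 2\,\tfrac{K^{\partial\mathcal{E}}(x,t)}{t}$ for $x\in\overline{\dom{\partial\mathcal{E}}}$, so by the ideal property $\Norm[\BFS]{\phi\,\chi_{(0,\sigma)}}<\infty$ if and only if $x\in\dom{(\cN^{\partial\mathcal{E}})_\BFS^\sigma}$, for each $\sigma>0$. Moreover, for any $x\in H$ and any fixed $v\in\dom{\partial\mathcal{E}}\neq\emptyset$ one has $\tfrac{K^{\partial\mathcal{E}}(x,t)}{t}\le\tfrac{\norm[H]{x-v}}{\varepsilon}+|\partial\mathcal{E}(v)|$ on $[\varepsilon,\sigma]$, so that $x\in\dom{(\cN^{\partial\mathcal{E}})_\BFS^\sigma}$ does not depend on $\sigma>0$; in particular the cut-offs $\tfrac{\tau^2}{2},\tau^2,2\tau^2,\tau$ all define the same condition, and the extra term $\Norm[\BFS]{\psi\,\chi_{(\tau^2,2\tau^2)}}$ in \eqref{eq p.1} is always finite. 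Combining this with \eqref{eq p.1} gives $(\cN^{\sqrt{\mathcal{E}}})_{\BFS^2}^\tau(x)<\infty\iff x\in\dom{(\cN^{\partial\mathcal{E}})_\BFS^\tau}$, i.e.\ the asserted equality; the case $x\notin\overline{\dom{\mathcal{E}}}=\overline{\dom{\partial\mathcal{E}}}$ needs no separate treatment, as then $K^{\sqrt{\mathcal{E}}}(x,\cdot)$ and $K^{\partial\mathcal{E}}(x,\cdot)$ are both bounded below by $\operatorname{dist}(x,\dom{\mathcal{E}})>0$, so both interpolation functions equal $+\infty$.

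The step I expect to be the real obstacle is closing this self-referential estimate in the upper bound: the energy term must be returned to $(\cN^{\sqrt{\mathcal{E}}})_{\BFS^2}^\tau$ with a gain strictly below $1$, which forces the choice of resolvent parameter $2t$ (so that the ensuing dilation is the bounded $D_2$ rather than its possibly unbounded inverse) and makes essential use of $\gamma<1$; keeping track of the intervals $(0,\tau^2),(0,2\tau^2),(\tau^2,2\tau^2)$ created by these dilations, and checking that every term actually lands in $\BFS$ — which is where the assumption on $\BFS^2$ is used — is the delicate bookkeeping.
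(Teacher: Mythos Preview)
Your approach is essentially the paper's: both inequalities rest on the Moreau--Yosida identity $\mathcal{E}(J_t x)+\tfrac{1}{2t}\norm[H]{x-J_tx}^2\le\mathcal{E}(v)+\tfrac{1}{2t}\norm[H]{x-v}^2$, and the upper bound is closed by a self-referential estimate exploiting the contracting dilation $D_2$ and $\gamma<1$; the domain equality then follows from Theorem~\ref{th4.1}/Corollary~\ref{th4.2} exactly as you describe.

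Two small differences are worth noting. For the lower bound you argue $\tfrac{1}{\sqrt{2}}\norm[H]{x-J_tx}\le K^{\sqrt{\mathcal{E}}}(x,\sqrt{t})$ directly, while the paper first obtains $\norm[H]{x-J_tx}\le K^{\sqrt{\mathcal{E}}}(x,\sqrt{2t})$ and then rescales via $D_2$; your route is cleaner and does not use $D_2$ here. For the upper bound the paper organises the recursion slightly differently: it first derives a self-referential inequality for $\psi(t)=\sqrt{\mathcal{E}(J_tx)/t}$ alone (from $\mathcal{E}(J_tx)\le\mathcal{E}(J_{2t}x)+\tfrac{1}{2t}\norm[H]{x-J_{2t}x}^2$), solves for $\Norm[\BFS]{\psi\chi_{(0,\tau^2)}}$, and only then inserts this into $K^{\sqrt{\mathcal{E}}}(x,\sqrt{t})/t\le\phi(t)+\psi(t)$ with the test element $v=J_tx$ (not $J_{2t}x$). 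This yields the stated leading constant $\tfrac{1}{1-\gamma}$. Your version, testing with $v=J_{2t}x$ and recursing directly on $(\cN^{\sqrt{\mathcal{E}}})_{\BFS^2}^\tau$, produces instead $\tfrac{\sqrt{2}\gamma}{1-\gamma}$ in front of $\Norm[\BFS]{\phi\chi_{(0,2\tau^2)}}$, which exceeds $\tfrac{1}{1-\gamma}$ when $\gamma>\tfrac{1}{\sqrt{2}}$; so your argument proves the domain equality (and an estimate of the same shape) but not the exact constants in \eqref{eq p.1} as written.
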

 
 \begin{proof}
 For every $x$, $v\in H$ and for every $t>0$, 
 \begin{align*}
  \mathcal{E}(v) -\mathcal{E}(J_tx)
 & \geq \langle \frac{x-J_tx}{t}, v-J_tx\rangle_H \\
 & \geq \frac{\Norm[H]{x-J_tx}^2}{t} - \frac{1}{t}\Norm[H]{x-J_tx} \Norm[H]{v-x}\\
 & \geq \frac{\Norm[H]{x-J_tx}^2}{2t} - \frac{\Norm[H]{v-x}^2}{2t} ,
 \end{align*}
 that is,
 \begin{equation}{\label{p.2}}
  \mathcal{E}(J_tx) + \frac{\Norm[H]{x-J_tx}^2}{2t} \leq \mathcal{E}(v) + \frac{\Norm[H]{v-x}^2}{2t} .
 \end{equation}
 Notice in passing that this is a proof that the infimum in the definition of the Moreau-Yosida approximation $\mathcal{E}_t (x) := \inf_{v\in H} \mathcal{E} (v) + \frac{\Norm[H]{v-x}^2}{2t}$ is attained, and that $J_t x$ is a global minimizer.
 
 The preceding inequality and the positivity of $\mathcal{E}$ imply that for every $x$, $v\in H$,
\begin{equation*}
\Norm[H]{x-J_tx} \leq \sqrt{2t\mathcal{E}(v)} + \Norm[H]{v-x} ,
\end{equation*} 
and hence
\begin{equation*}
 {\Norm[H]{x-J_tx}} \leq \inf_{v\in H} \left( \sqrt{2t\mathcal{E}(v)} + \Norm[H]{v-x} \right) = K^{\sqrt{\mathcal{E}}} (x,\sqrt{2t}) .
 \end{equation*}
We divide this inequality by $t$, multiply by $\chi_{(0,\frac{\tau^2}{2} )} (t)$, take the norm in $\BFS$, and use the definition of $\BFS^2$:
\begin{align*}
\lefteqn{\Norm[\BFS]{t \mapsto \frac{\Norm[H]{x - J_tx}}{t}\chi_{(0,\frac{\tau^2}{2})(t)}} } \\
& \leq 2\, \Norm[\BFS]{t \mapsto \frac{K^{\sqrt{\mathcal{E}}} (x,\sqrt{2t})}{2t}\chi_{(0,\frac{\tau^2}{2})(t)}} \\
& \leq 2\, \Norm[\mathcal{L} (\BFS )]{D_2} \Norm[\BFS]{t \mapsto \frac{K^{\sqrt{\mathcal{E}}} (x,\sqrt{t})}{t}\chi_{(0,\tau^2)(t)}} \\
&\leq \sqrt{2}\Norm[\BFS^2]{t \mapsto \frac{K^{\sqrt{\mathcal{E}}} (x,t)}{t}\chi_{(0,\tau)(t)}}\\
& \leq \sqrt{2} (\cN^{\sqrt{\mathcal{E}}})_{\BFS^2}^\tau (x) .
\end{align*}
This proves the first inequality of \eqref{eq p.1}.

Now for the second inequality of \eqref{eq p.1}, we have from \eqref{p.2} for all $x$, $v\in H$,
\begin{equation*}
\mathcal{E}(J_tx) \leq \mathcal{E}(v) + \frac{\Norm[H]{v-x}^2}{2t} ,
\end{equation*} 
and in particular, when we choose $v=J_{2t}x$ and multiply both sides by $\frac{1}{t}$, then
\begin{align*}
\frac{1}{t}\mathcal{E}(J_tx) \leq \frac{1}{t}\mathcal{E}(J_{2t}x) + \frac{\Norm[H]{J_{2t}x-x}^2}{2t^2} 
\end{align*}
We take square roots on both sides of the inequality, multiply the resulting inequality by the characteristic function $\chi_{(0,\tau^2 )}$ and then take the norm in $\BFS$:
\begin{align*}
\lefteqn{\Norm[\BFS]{t\mapsto \sqrt{\frac{\mathcal{E}(J_tx)}{t}} \, \chi_{(0,\tau^2 )} (t)}} \\
& \leq \sqrt{2} \, \Norm[\BFS]{t\mapsto \sqrt{\frac{\mathcal{E}(J_{2t} x)}{2t}} \, \chi_{(0,\tau^2 )} (t)} +  \sqrt{2}\, \Norm[\BFS]{t\mapsto \frac{\Norm[H]{J_{2t}x-x}}{2t} \, \chi_{(0,\tau^2 )} (t)} \\
& \leq \sqrt{2} \Norm[\mathcal{L} (\BFS )]{D_2} \, \Norm[\BFS]{t\mapsto \sqrt{\frac{\mathcal{E}(J_tx)}{t}} \, \chi_{(0,2\tau^2 )} (t)} + \sqrt{2} \Norm[\mathcal{L} (\BFS )]{D_2} \, \Norm[\BFS]{t\mapsto \frac{\Norm[H]{J_{t}x-x}}{t} \, \chi_{(0,2\tau^2 )} (t)} 
\end{align*}
By assumption, $\gamma = \sqrt{2} \Norm[\mathcal{L} (\BFS )]{D_2} <1$, and therefore,
\begin{align*}
\lefteqn{\Norm[\BFS]{t\mapsto \sqrt{\frac{\mathcal{E}(J_tx)}{t}} \, \chi_{(0,\tau^2 )} (t)}} \\
& \leq \frac{\gamma}{1-\gamma} \, \Norm[\BFS]{t\mapsto \sqrt{\frac{\mathcal{E}(J_tx)}{t}} \, \chi_{(\tau^2,2\tau^2 )} (t)} + \frac{\gamma}{1-\gamma }\, \Norm[\BFS]{t\mapsto \frac{\Norm[H]{J_{t}x-x}}{t} \, \chi_{(0,2\tau^2 )} (t)} .
\end{align*}
By definition of the interpolation function, by definition of the norm in $\BFS^2$, and by estimating the infimum in the definition of the $K$-functional, one has
\begin{align*}
   (\cN^{\sqrt{\mathcal{E}}})_{\BFS^2}^\tau (x) & = \Norm[\BFS]{t\mapsto \frac{K^{\sqrt{\mathcal{E}}} (x,\sqrt{t})}{t} \, \chi_{(0,\tau^2 )} (t)} \\
   & \leq \Norm[\BFS]{t\mapsto \left( \frac{\Norm[H]{J_{t}x-x}}{t} + \sqrt{\frac{\mathcal{E}(J_tx)}{t}} \right) \, \chi_{(0,\tau^2 )} (t)} \\
   & \leq \frac{1}{1-\gamma} \, \Norm[\BFS]{t\mapsto \frac{\Norm[H]{J_{t}x-x}}{t} \, \chi_{(0,2\tau^2 )} (t)} \\
   & \phantom{\leq\ } + \frac{\gamma}{1-\gamma} \, \Norm[\BFS]{t\mapsto \sqrt{\frac{\mathcal{E}(J_tx)}{t}} \, \chi_{(\tau^2,2\tau^2 )} (t)} .
\end{align*}
This is the second inequality in \eqref{eq p.1}. Note that the second term on the right hand side of the previous inequality is finite for every $x\in H$; in fact, the function $t\mapsto \sqrt{\frac{\mathcal{E}(J_tx)}{t}}$ is bounded on the interval $(\tau^2 , 2\tau^2 )$, and $L^1\cap L^\infty (0,\infty )\subseteq\BFS$. This observation and Corollary \ref{th4.3} imply the equality of the effective domains. 
\end{proof}

\paragraph{\bf Application.} Let $\Omega \subseteq\mathbb{R}^n$ be a bounded, smooth domain, let $q\in (1,\infty )$, and let $\mathcal{E}:L^2(\Omega)\to [0,\infty]$ be the energy of the negative Dirichlet-$q$-Laplace operator given by
\[ 
\mathcal{E}(u) = \begin{cases}
         \frac{1}{q}\int_{\Omega} |\nabla u|^q & \text{if } u\in \mathring{V}^{1}_{q,2} (\Omega ) , \\
        \infty & \text{otherwise} ,
        \end{cases}  
\] 
where $\mathring{V}^{1}_{q,2} (\Omega )$ is the closure of the space of test functions $C^\infty_c (\Omega )$ in the Sobolev space
\[
V^{1}_{q,2} (\Omega ) := \{ u\in L^2 (\Omega ) \st \nabla u \in L^q (\Omega )^n \} ,
\]
and where $\nabla u$ is the distributional gradient of $u\in L^2 (\Omega )$. The space $V^{1}_{q,2} (\Omega )$ is a Banach space for the norm $\norm[V^1_{q,2} (\Omega )]{u} = \norm[L^2 (\Omega )]{u} + \norm[L^q (\Omega )^n]{\nabla u}$. If $q\geq 2$, then $V^1_{q,2} (\Omega ) = W^{1,q} (\Omega )$ with equivalence of norms, as one may for example show by applying the Poincar\'e-Wirtinger inequality and by using the boundedness of $\Omega$. By the first Poincar\'e inequality, and for $q\in [2,\infty )$, the application $u\mapsto \norm[L^q(\Omega )^n]{\nabla u}$ is an equivalent norm on $\mathring{V}^{1}_{q,2} (\Omega ) = \mathring{W}^{1,q} (\Omega )$.

The energy of the negative Dirichlet-$q$-Laplace operator is densely defined, lower semicontinuous and convex. Its subgradient $-\Delta_q := \partial\mathcal{E}$ is a realization in $L^2 (\Omega )$ of the negative $q$-Laplace operator with homogeneous Dirichlet boundary condition $u = 0$. The negative Dirichlet-$q$-Laplace operator is $m$-accretive and densely defined on $L^2 (\Omega )$. 

\begin{corollary} \label{cor.qlaplace}
Let $-\Delta_q$ be the negative Dirichlet-$q$-Laplace operator on $L^2 (\Omega )$ ($q\in [2,\infty )$), and let $\cN^{-\Delta_q} = (\norm[L^2]{\cdot} , |\Delta_q \cdot |)$ be the associated interpolation couple. Then, for every $\theta\in (0,\frac12 )$ and every $p\in (1,\infty )$,
\[
\dom{(\cN^{-\Delta_q})_{\BFS_{\theta ,p}}^\tau} = (L^2 (\Omega ) , \mathring{W}^{1,q} (\Omega ))_{\alpha ,r} ,
\]
where $\BFS_{\theta ,p} = L^p (0,\infty ; t^{p(1-\theta ) -1} \ud t)$ is the polynomially weighted $L^p$ space, $\alpha = \frac{q\theta}{1+\theta (q-2)}$ and $r=p(1+\theta(q-2))$.
\end{corollary}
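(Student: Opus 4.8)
The plan is to obtain the corollary by combining Theorem~\ref{Th p1}, Lemma~\ref{3.11}, Lemma~\ref{lem.k-mean} and the classical Lions--Peetre identification of espaces de moyennes with real interpolation spaces. Write $\mathcal{E}$ for the energy of the negative Dirichlet-$q$-Laplace operator, so that $\partial\mathcal{E} = -\Delta_q$. I would first check that the choice $\BFS = \BFS_{\theta,p}$ meets the hypotheses of Theorem~\ref{Th p1}: for $p>1$ the polynomial weights occurring in $\BFS_{\theta,p}$ and in $\BFS_{\theta,p}^2 = \BFS_{2\theta,p}$ are Muckenhoupt $A_p$-weights, so the Hardy operator is bounded on both spaces; for $\theta\in(0,\tfrac12)$ the space $\BFS_{\theta,p}^2 = \BFS_{2\theta,p}$ is a Banach function space, and since $\Norm[\mathcal{L}(\BFS_{\theta,p})]{D_2} = 2^{\theta-1}$ one gets $\gamma = \sqrt2\,\Norm[\mathcal{L}(\BFS_{\theta,p})]{D_2} = 2^{\theta-1/2} < 1$. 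Theorem~\ref{Th p1} then gives $\dom{(\cN^{-\Delta_q})_{\BFS_{\theta,p}}^\tau} = \dom{(\cN^{\sqrt{\mathcal{E}}})_{\BFS_{2\theta,p}}^\tau}$, and since $\cN_0 = \Norm[L^2(\Omega)]{\cdot}$ is finite everywhere on $L^2(\Omega)$ and $\sqrt{\mathcal{E}(0)} = 0$, Lemma~\ref{3.11} (applied with the Banach function space $\BFS_{2\theta,p}$, on which the Hardy operator is bounded) upgrades this to $\dom{(\cN^{-\Delta_q})_{\BFS_{\theta,p}}^\tau} = \dom{(\cN^{\sqrt{\mathcal{E}}})_{\BFS_{2\theta,p}}}$.

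Next I would recognise $\cN^{\sqrt{\mathcal{E}}}$ as a nonlinear interpolation couple in the sense of Example~\ref{ex.nonlinear}. Since $q\ge2$ and $\Omega$ is bounded, $\mathring{V}^1_{q,2}(\Omega) = \mathring{W}^{1,q}(\Omega)$ and $u\mapsto\Norm[L^q(\Omega)^n]{\nabla u}$ is an equivalent norm there, so $\sqrt{\mathcal{E}(u)} = q^{-1/2}\Norm[L^q(\Omega)^n]{\nabla u}^{q/2}$ is comparable, with two-sided constants, to $\Norm[\mathring{W}^{1,q}(\Omega)]{u}^{q/2}$. Replacing $\cN_1 = \sqrt{\mathcal{E}}$ by this comparable function only rescales the inner variable $t$ of the associated $K$-function by a bounded factor, and the polynomial weight of $\BFS_{2\theta,p}$ absorbs such a rescaling up to a multiplicative constant, so the effective domain of $(\cN^{\sqrt{\mathcal{E}}})_{\BFS_{2\theta,p}}$ coincides with that of $(\cN)_{\BFS_{2\theta,p}}$ for the nonlinear couple $\cN = (\Norm[L^2(\Omega)]{\cdot},\ \Norm[\mathring{W}^{1,q}(\Omega)]{\cdot}^{q/2})$ with exponents $\alpha_0 = 1$ and $\alpha_1 = q/2$.

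By Lemma~\ref{lem.k-mean}, this effective domain equals that of the mean-method interpolation function $\cN_{\BBFS}^{L^0}$ with $\BBFS = (\BFS_{2\theta,p},\BFS_{2\theta,p})$. Using the elementary identities $\Norm[\BFS_{\sigma,p}]{t\mapsto g(t)/t} = \Norm[L^p(\ud t/t)]{t\mapsto t^{-\sigma}g(t)}$ and $\Norm[\BFS_{\sigma,p}]{g} = \Norm[L^p(\ud t/t)]{t\mapsto t^{1-\sigma}g(t)}$, the defining infimum of $\cN_{\BBFS}^{L^0}(x)$ takes the form
\[
\inf_{u}\left[\Norm[L^p(\ud t/t)]{t\mapsto t^{-2\theta}\Norm[L^2(\Omega)]{x-u(t)}} + \Norm[L^{pq/2}(\ud t/t)]{t\mapsto t^{2(1-2\theta)/q}\Norm[\mathring{W}^{1,q}(\Omega)]{u(t)}}^{q/2}\right],
\]
where $u$ ranges over measurable $L^2(\Omega)$-valued functions on $(0,\infty)$. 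Since taking a positive power does not affect finiteness of the second summand, the effective domain is exactly the Lions--Peetre espace de moyennes of the couple $(L^2(\Omega),\mathring{W}^{1,q}(\Omega))$ with rate $2\theta$ and exponent $p$ on the $L^2$-side and rate $\tfrac{2(1-2\theta)}{q}$ and exponent $\tfrac{pq}{2}$ on the $\mathring{W}^{1,q}$-side. By the classical identification of such spaces (Lions \& Peetre~\cite{LiPe64}, Peetre~\cite{Pe70}, Tartar~\cite{Tar72}, Bergh \& L\"ofstr\"om~\cite[Section~3.12]{BeLo76}) this equals $(L^2(\Omega),\mathring{W}^{1,q}(\Omega))_{\alpha,r}$ with $\alpha = \frac{2\theta}{2\theta + 2(1-2\theta)/q} = \frac{q\theta}{1+\theta(q-2)}$ and $\frac1r = \frac{1-\alpha}{p} + \frac{\alpha}{pq/2}$, that is, $r = p(1+\theta(q-2))$; the assumptions $\theta\in(0,\tfrac12)$ and $q\ge2$ guarantee $\alpha\in(0,1)$ and $r\in(1,\infty)$. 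Chaining the three reductions proves the corollary.

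I expect the main obstacle to be the third step: applying the Lions--Peetre theory in the generality required here, first establishing (or quoting in precisely the needed form) that the mean functional above, with two different exponents $p$ and $pq/2$ and the power $q/2$, does describe a genuine real interpolation space, and then carefully tracking the bookkeeping of the two rates $2\theta$, $\tfrac{2(1-2\theta)}{q}$ and the two exponents $p$, $\tfrac{pq}{2}$ so that $\alpha$ and $r$ come out exactly as asserted. By contrast, the verification of the hypotheses of Theorem~\ref{Th p1} and the identity $\mathring{V}^1_{q,2}(\Omega) = \mathring{W}^{1,q}(\Omega)$ are routine.
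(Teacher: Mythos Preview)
Your proposal is correct and follows the same overall strategy as the paper: verify the hypotheses of Theorem~\ref{Th p1} for $\BFS_{\theta,p}$, apply it together with Lemma~\ref{3.11} to reduce to $\dom{(\cN^{\sqrt{\mathcal{E}}})_{\BFS_{2\theta,p}}}$, recognise $\sqrt{\mathcal{E}}$ as (equivalent to) a power of the $\mathring{W}^{1,q}$-norm, and then invoke a classical identification theorem. The only divergence is in the final step. The paper observes directly that the effective domain in question is, by definition, the power space $(L^2(\Omega), (\mathring{W}^{1,q}(\Omega))^{q/2})_{2\theta,p}$ in the sense of \cite[Section~3.11]{BeLo76}, and then applies the power theorem \cite[Theorem~3.11.6]{BeLo76} in one line to read off $\alpha$ and $r$. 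You instead pass through Lemma~\ref{lem.k-mean} to the mean method, unpack the resulting functional into Lions--Peetre form, and appeal to \cite[Section~3.12]{BeLo76}. Both routes are valid and rest on essentially the same underlying theory, but the power theorem is the more direct citation here: it is tailored precisely to couples of the form $(\norm[\X_0]{\cdot}^{\alpha_0}, \norm[\X_1]{\cdot}^{\alpha_1})$ and spares you the bookkeeping you yourself flagged as the main obstacle.
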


\begin{proof}
As mentioned in the paragraph before Theorem \ref{Th p1}, $\BFS_{\theta ,p}^2 = \BFS_{2\theta ,p}$, and hence, by applying successively Theorem \ref{Th p1} and Lemma \ref{3.11}, 
\[
\dom{(\cN^{\partial\mathcal{E}})_{\BFS_{\theta,p}}^\tau } = \dom{(\cN^{\sqrt{\mathcal{E}}} )_{\BFS_{2\theta ,p}}^\tau} = \dom{(\cN^{\sqrt{\mathcal{E}}} )_{\BFS_{2\theta ,p}}} .
\]
By definition, the space on the right hand side is the space of all $u\in L^2 (\Omega )$ such that $t\mapsto \frac{K(u,t)}{t}$ is an element of $\BFS_{2\theta ,p}$, where
\[
K(u,t) = \inf_{v\in \mathring{W}^{1,q} (\Omega )} ( \norm[L^2 (\Omega )]{u} + \frac{t}{\sqrt{q}} \norm[L^q (\Omega )^n]{\nabla u}^{\frac{q}{2}} .
\]
In other words, the space on the right hand side coincides with the power space $(L^2 (\Omega ) , (\mathring{W}^{1,q} (\Omega ))^{\frac{q}{2}} )_{2\theta ,p}$ defined in \cite[Section 3.11, p. 68]{BeLo76}. By the power theorem (see \cite[Theorem 3.11.6]{BeLo76}), 
\begin{align*}
\dom{(\cN^{-\Delta_q})_{\BFS_{\theta ,p}}^\tau} & = (L^2 (\Omega ) , \mathring{W}^{1,q} (\Omega ))_{\alpha ,r} ,
\end{align*}
where $\alpha = \frac{q\theta}{1+\theta (q-2)}$ and $r=p(1+\theta(q-2))$. 
\end{proof}

Consider the initial boundary value problem
\begin{equation} \label{eq.qlaplace}
\begin{split}
 \partial_t u - \Delta_q u & = 0 \text{ in } (0,T)\times\Omega , \\
 u & = 0 \text{ in } (0,T)\times\partial\Omega , \\
 u(0,\cdot ) & = u_0 \text{ in } \Omega .
\end{split}
\end{equation}
This problem can abstractly be rewritten as a gradient system on $L^2 (\Omega )$ by using the energy of the Dirichlet-$q$-Laplace operator and its subgradient. By the Crandall-Liggett theorem, for every initial value $u_0\in L^2 (\Omega )$ the above problem admits a unique mild solution $C ([0,T];L^2 (\Omega ))$. Actually, the semigroup generated by $\Delta_q$ is regularizing by \cite[Th\'eor\`eme 3.2]{Bre73}, and every mild solution is a strong solution satisfying $u\in H^1_{loc} ((0,T];L^2 (\Omega ))$. Moreover, if $u_0\in \mathring{V}^{1}_{q,2} (\Omega )$ (the effective domain of the energy), then 
\[
\| \partial_t u\|_{L^2 (\Omega )} , \, \| \Delta_q u\|_{L^2 (\Omega )} \in L^2 (0,T) ,
\]
which corresponds to a kind of $L^2$ maximal regularity. With the help of Theorem \ref{th5.3} and Corollary \ref{cor.qlaplace}, we obtain the following variant.

\begin{corollary}
Let $\theta\in (0,\frac12 )$ and $p\in (1,\infty )$, and set $\alpha = \frac{q\theta}{1+\theta (q-2)}$ and $r=p(1+\theta(q-2))$. Let $u_0\in L^2 (\Omega )$ and let $u\in C([0,T] ; L^2 (\Omega ))$ be the unique mild solution of the initial boundary value problem \eqref{eq.qlaplace} with $q\in [2,\infty )$. Then the solution $u$ of \eqref{eq.qlaplace} satisfies
\[
\| \partial_t u\|_{L^2 (\Omega )} , \, \| \Delta_q u\|_{L^2 (\Omega )} \in L^p (0,T ; t^{p(1-\theta )-1} \; \ud t) 
\]
if and only if
\[
u_0 \in (L^2 (\Omega ) , \mathring{W}^{1,q} (\Omega ))_{\alpha ,r} .
\]
\end{corollary}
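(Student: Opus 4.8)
The plan is to deduce the corollary from two results established earlier in the excerpt: the characterisation of $\dom{(\cN^{-\Delta_q})_{\BFS_{\theta ,p}}^\tau}$ for the regularizing semigroup generated by $-\Delta_q$ in terms of the scalar quantity $t\mapsto|{-\Delta_q}\,S(t)u_0|$ (Theorem~\ref{th5.3}), and the identification of that effective domain with a real interpolation space (Corollary~\ref{cor.qlaplace}). What must be supplied in addition is the translation of the left-hand condition of the claimed equivalence into that quantity. Since $-\Delta_q=\partial\mathcal E$ is $m$-accretive of type $0$ (as $\mathcal E$ is convex) and densely defined, with $\overline{\dom{-\Delta_q}}=L^2(\Omega)$, the initial boundary value problem \eqref{eq.qlaplace} is exactly the gradient flow $\dot u+\partial\mathcal E(u)\ni 0$ and admits for every $u_0\in L^2(\Omega)$ the mild solution $u=S(\cdot)u_0$. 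As recorded in the paragraph preceding the corollary, this $u$ is in fact a strong solution, $u\in H^1_{loc}((0,T];L^2(\Omega))$, with $u(t)\in\dom{-\Delta_q}$ and $\partial_t u(t)=\Delta_q u(t)$ for a.e.\ $t\in(0,T)$; since $-\Delta_q$ is single valued on its domain, its set norm there is just the $L^2$-norm, whence
\[
|{-\Delta_q}\,S(t)u_0|=\Norm[L^2(\Omega)]{\Delta_q u(t)}=\Norm[L^2(\Omega)]{\partial_t u(t)}\qquad\text{for a.e.\ }t\in(0,T).
\]

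Next I would fix $\BFS:=\BFS_{\theta ,p}=L^p(0,\infty;t^{p(1-\theta)-1}\,\ud t)$, which is a Banach function space; since $w_{\theta ,p}$ is a Muckenhoupt $A_p$-weight and $p>1$, the Hardy operator is bounded on $\BFS$ (see Section~\ref{sec.definition}). Because a function $g$ belongs to $L^p(0,T;t^{p(1-\theta)-1}\,\ud t)$ precisely when $g\,\chi_{(0,T)}\in\BFS$, the displayed identity shows that the regularity condition $\Norm[L^2(\Omega)]{\partial_t u},\ \Norm[L^2(\Omega)]{\Delta_q u}\in L^p(0,T;t^{p(1-\theta)-1}\,\ud t)$ is equivalent to $t\mapsto|{-\Delta_q}\,S(t)u_0|\,\chi_{(0,T)}(t)\in\BFS$.

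I would then apply Theorem~\ref{th5.3} to $A=-\Delta_q$ (type $\omega=0$, regularizing semigroup by \cite[Th\'eor\`eme 3.2]{Bre73}), with $\tau=T$ and $\BFS=\BFS_{\theta ,p}$; since $\overline{\dom{-\Delta_q}}=L^2(\Omega)$ this gives
\[
\dom{(\cN^{-\Delta_q})_{\BFS_{\theta ,p}}^T}=\{\,u_0\in L^2(\Omega)\st t\mapsto|{-\Delta_q}\,S(t)u_0|\,\chi_{(0,T)}(t)\in\BFS_{\theta ,p}\,\},
\]
so the regularity condition holds if and only if $u_0\in\dom{(\cN^{-\Delta_q})_{\BFS_{\theta ,p}}^T}$. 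Finally, Corollary~\ref{cor.qlaplace} (with $\tau=T$) identifies this effective domain with $(L^2(\Omega),\mathring W^{1,q}(\Omega))_{\alpha ,r}$, where $\alpha=\frac{q\theta}{1+\theta(q-2)}$ and $r=p(1+\theta(q-2))$; chaining the three equivalences yields the corollary.

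The substantive content sits in Theorem~\ref{th5.3} and Corollary~\ref{cor.qlaplace}, so the one genuinely new point — and the step to be careful about — is the first one: that the mild solution is a strong solution for \emph{every} $u_0\in L^2(\Omega)$, not merely for $u_0\in\dom{-\Delta_q}$, which is what allows the time-regularity of $u$ to be read off the scalar function $t\mapsto|{-\Delta_q}\,S(t)u_0|$; the accompanying observation that $-\Delta_q$ is single valued (so that the set norm equals the $L^2$-norm) is routine. Everything beyond that is an unfolding of definitions together with a direct appeal to the two cited results.
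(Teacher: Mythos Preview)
Your proposal is correct and follows precisely the approach the paper indicates: the paper gives no detailed proof for this final corollary, stating only that it is obtained ``with the help of Theorem~\ref{th5.3} and Corollary~\ref{cor.qlaplace}'', and your argument is a careful unfolding of exactly this combination, supplying the translation between the regularity condition and the set-norm condition via the strong-solution property and the single-valuedness of $-\Delta_q$.
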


\nocite{Tar72a,Tar72}
\nocite{Br69}
\nocite{LoSh68,LoSh69}
\nocite{CoHa18}
\nocite{Pe70b,Pe70}

\bibliographystyle{plain}
%\bibliography{ralph}
\def\cprime{$'$}
  \def\ocirc#1{\ifmmode\setbox0=\hbox{$#1$}\dimen0=\ht0 \advance\dimen0
  by1pt\rlap{\hbox to\wd0{\hss\raise\dimen0
  \hbox{\hskip.2em$\scriptscriptstyle\circ$}\hss}}#1\else {\accent"17 #1}\fi}
  \def\cprime{$'$} \def\cprime{$'$} \def\cprime{$'$}

\end{document}